\title[]{Microlocal analysis of quasianalytic Gelfand-Shilov type ultradistributions}
\author[M. Cappiello]{Marco Cappiello}
\address{Department of Mathematics, University of Torino, Via Carlo Alberto 10, 10123 Torino, Italy.}
\email{marco.cappiello@unito.it}
\author[R. Schulz]{Ren\'e Schulz}
\address{Mathematisches Institut, Georg-August Universit\"at G\"ottingen, Bunsenstra\ss e 3--5, D--37073 G\"ottingen,
Germany}
\email{rschulz@uni-math.gwdg.de}
\numberwithin{equation}{section}          
\newtheorem{thm}{Theorem}
\numberwithin{thm}{section}
\newcommand{\rubrik}{}
\newtheorem{prop}[thm]{Proposition}
\newtheorem{cor}[thm]{Corollary}
\newtheorem{lem}[thm]{Lemma}
\theoremstyle{definition}
\newtheorem{defn}[thm]{Definition}
\theoremstyle{remark}
\newtheorem{rem}[thm]{Remark}
\newcommand{\thmref}[1]{Theorem~\ref{#1}}
\newcommand{\lemref}[1]{Lemma~\ref{#1}}
\newcommand{\RR}{{\mathbb R^d}}
\newcommand{\NN}{\mathbb N^d}
\newcommand{\no}{\mathbb N}
\newcommand{\rr}[1]{\mathbb R^{#1}}
\newcommand{\GS}[1]{\mathcal{S}_{#1}}
\newcommand{\GSd}[1]{{\mathcal{S}_{#1}^\prime}}
\newcommand{\GSdG}[1]{{\mathcal{S}_{#1 ,\Gamma}^\prime}}
\newcommand{\nn}[1]{\mathbb N^{#1}}
\newcommand{\CC}{{\mathbb C^d}}
\newcommand{\co}{\mathbb C}
\newcommand{\cc}[1]{\mathbb C^{#1}}
\newcommand{\WF}{\mathrm{WF}_\mathrm{gl}}
\newcommand{\WFt}{\mathrm{WF}^\theta_\mathrm{gl}}
\newcommand{\wt}{\widetilde}
\def\N{\mathbb{N}}
\keywords{Gelfand-Shilov spaces, global wave front sets, Bargmann transform, localization operators, tempered ultradistributions. \\ 
MSC 2010 codes: 35A18, 35A22, 35A27, 46F05.}
\begin{document}

\begin{abstract}
We introduce a global wave front set suitable for the analysis of tempered ultradistributions of quasianalytic Gelfand-Shilov type. We study the transformation properties of the wave front set and use them to give microlocal existence results for pull-backs and products. We further study quasianalytic microlocality for classes of localization and ultradifferential operators, and prove microellipticity for differential operators with polynomial coefficients.
\end{abstract}

\maketitle

\section{Introduction}

Starting from \cite{Hormander0}, the analysis of singularities of Schwartz distributions has been based on the study of their wave front set. We 
recall that a distribution $u \in \mathcal{D}'(\RR)$ is said to be microlocal at a point $(x_0, \xi_0) \in \RR \times (\RR \setminus 0)$ if there exists a cut-off function $\phi$ supported around $x_0$ such that $\widehat{\phi u}$ is rapidly decaying in a conic neighborhood of $\xi_0.$ The wave front set is then defined as the set of points $(x,\xi) \in \RR \times (\RR \setminus 0)$  where $u$ is not microlocal. The approach introduced by H\"ormander has been applied successfully to the analysis of propagation of singularities in the theory of partial differential equations. Moreover, it has been soon extended to the analysis of other types of singularities such as Gevrey singularities, cf. \cite{Rodino} and the references therein, or analytic singularities, see e.g. \cite{Hormander0, SJ}. 
\par
Later many authors started to study \textit{global} singularities of tempered distributions in $\mathcal{S}'(\RR)$ and introduced several different notions of wave front sets providing information not only on the local regularity of the distributions but also on their behavior at infinity. Among them we recall the \textit{scattering wave front set} $\textrm{WF}_{\textrm{sc}}$, see \cite{Melrose1}, also known as $\mathcal{S}$-\textit{wave front set} $\textrm{WF}_{\mathcal{S}}$, see \cite{Cordes,CM}, and the \textit{quadratic scattering wave front set} $\textrm{WF}_{\textrm{qsc}}$, see \cite{Wunsch}, which appear as the natural tools to study the propagation of singularities on manifolds with conical ends. Also the analytic counterpart of these wave front sets has been defined in \cite{RZ1, RZ2}.
\par
Another notion of global wave front set for tempered distributions has been introduced in \cite{Hormander1}. In this context a distribution $u \in \mathcal{S}'(\RR)$ is microlocal at a point $(x_0,\xi_0) \in T^{\ast}(\RR) \setminus \{(0,0)\}$ if there exists a Shubin type symbol $a(x,\xi)$ such that $a^w(x,D)u \in \mathcal{S}(\RR)$ and $a$ is non-characteristic at $(x_0,\xi_0),$ (cf. also \cite{RW} for a different characterization of this set in terms of the Gabor transform). More recently, an equivalent notion of wave front set, called \textit{homogeneous wave front set}, has been defined in \cite{Nakamura1} in the language of semi-classical analysis. The equivalence between the two wave front sets has been indeed proved only very recently in \cite{SW}. The homogeneous wave front set, as well as its Gevrey and analytic versions, see \cite{MNS1, MNS2, Mizuhara}, has been mainly applied to study the microlocal smoothing effect for Schr\"odinger equations.
\par
In this paper we are interested in the case of tempered ultradistributions of Gelfand-Shilov type. The Gelfand-Shilov space $\mathcal{S}_\theta (\RR), \theta \geq 1/2,$ has been defined in \cite{GS} as the space of all functions $f \in C^{\infty}(\RR)$ satisfying the following estimates
\begin{equation} \label{GSequal}
|\partial^{\alpha}f(x)| \leq C^{|\alpha|+1}\alpha!^{\theta}e^{-c|x|^{\frac{1}{\theta}}}, \qquad x \in \RR,
\end{equation}
for some positive constant independent of $\alpha.$ Clearly we have $\mathcal{S}_\theta (\RR) \subset \mathcal{S}(\RR).$ We refer to \cite{GS, Pilipovic} and to the next Section 1.1 for a more extended description of the properties of this space and of its dual space $\mathcal{S}_\theta^{\prime}(\RR)$. Here we limit ourselves to observe that with respect to the definition of $\mathcal{S}(\RR)$, the estimate \eqref{GSequal} gives detailed information on the Gevrey-analytic regularity of $f$ and on the exponential decay at infinity. For $\theta >1,$ these spaces appear as a natural global counterpart of the Gevrey spaces, whereas for $\theta=1$ their elements are real-analytic functions of exponential decay. Concerning microlocal analysis, a notion of scattering wave front set has been introduced in \cite{CR} by a quite easy translation of the approach used in \cite{CM} for tempered distributions to the Gelfand-Shilov frame. Similarly, the extensions of the homogeneous wave front set in \cite{MNS1, MNS2, Mizuhara}, encoding Gevrey and analytic singularities of tempered distributions, can be easily extended to the elements of the dual space of $\mathcal{S}_\theta (\RR),$ $\theta \geq 1.$ 
\par
For $\theta < 1,$ the scattering wave front set cannot be so easily converted, as the space $\mathcal{S}_\theta (\RR)$ lacks compactly supported functions. On the other hand, these spaces are very interesting for their connections with complex analysis as the space of test functions consists in entire functions and they are a natural framework for studying regularity properties of solutions of several classes of partial differential equations, see \cite{CGR1, CGR2, Hormander1}. 
Moreover, they are currently studied also in other fields as for instance time-frequency analysis due to their good invariance properties with respect to the short time Fourier transform, cf. \cite{CPRT, Grochenig2, Teofanov, Toft}.
\par
A first approach to microlocal analysis in these spaces would be to approximate cut-off functions as done in the local theory in \cite{Hormander0}.
However, this method is technically very involved and yields several different notions for the case $\theta <1,$ see \cite{PT}.\\ 
In our paper, we adopt the approach already outlined in \cite{Hormander1} for the case $\theta =1/2$, and based on the characterization of singularities via the so-called Fourier-Bros-Iagolnitzer (FBI) transform, also known as Bargmann transform. This approach, which has been already used in the local theory, cf. \cite{CK, Hormander0, Martinez1, SJ}, is more appropriate for our functional setting. Moreover, due to the relation between the FBI transform and the short time Fourier transform, we can base the proofs of some statements on well-known results from time-frequency analysis and we are also able to study the properties of our wave front sets under the action of localization operators. This is an entirely new feature for this wave front set, as previously microlocal properties of localization operators were only known through their connection to Weyl pseudodifferential operators.\\
Our wave front set is modelled similarly to the homogeneous wave front set in \cite{MNS1, MNS2, Mizuhara, Nakamura1}, in particular it is defined as a conic set in $\mathbb{R}^{2d} \setminus \{(0,0)\}.$ Here we focus mainly on the general properties of this wave front set and postpone to a future paper possible applications to particular classes of partial differential equations.

The paper is organized as follows.  In Section \ref{preliminaries} we recall the definition and some basic facts on the Gelfand-Shilov spaces and their duals and we prove some mapping properties of the FBI transform on these spaces. In Section \ref{wavefront} we define the global wave front set for tempered ultradistributions and illustrate its behavior under linear symplectic transformations and standard operations such as pullbacks, tensor products, products and convolution. In Section \ref{micro} we study the microlocality properties of the wave front set with respect to localization operators and to differential and ultradifferential operators. Finally we prove a microellipticity result for differential operators with polynomial coefficients in analogy with what has been done in \cite{Hormander1} in the case $\theta =1/2.$
\section{Preliminaries}\label{preliminaries}

In the sequel we will use the notation $A(x)\lesssim B(x)$ if two maps $A$ and $B$ from some set $X$ to $[0,+\infty)$ fulfill $A(x)\leq CB(x)\ \forall x \in X$ for some positive constant $C$ independent of all possible indices involved. 

\subsection{Gelfand-Shilov functions and ultradistributions}

In the following let $\mu, \nu> 0$ with $\mu+\nu\geq 1$.

\begin{defn}
Let $C>0$. We denote by $\mathcal{S}^{\mu}_{\nu,C}(\RR)$ the Banach space of all $\mathcal{C}^\infty$-functions satisfying
	\begin{equation}
\sup_{\alpha, \beta \in \nn{d}} \sup_{x \in \RR} C^{-|\alpha|-|\beta|}(\alpha!)^{-\nu}(\beta!)^{-\mu}|x^\alpha\partial^\beta f(x)|<\infty,
		\label{eq:gsdef}
	\end{equation}
equipped with the norm given by the lefthand side.\\
The space of all Gelfand-Shilov functions of indices $\mu,\nu$ on $\RR$ is then defined as 
$$\mathcal{S}^{\mu}_{\nu}(\RR):=\bigcup_{C>0} \mathcal{S}^{\mu}_{\nu,C}(\RR)$$
equipped with the inductive limit topology. 
\end{defn}
There are other equivalent ways of defining the space $\mathcal{S}^{\mu}_{\nu}(\RR)$, cf. \cite{CCK, GS}. We list some of them in the following lemma.
\begin{lem}Let $\mu > 0, \nu > 0, \mu+\nu \geq 1.$ For $f \in \mathcal{S}(\RR)$ the following conditions are equivalent:\\
i) $f \in \mathcal{S}^{\mu}_{\nu}(\RR).$ \\
ii) There exist positive constants $C, c$ such that $$|\partial^{\alpha}f(x)|\lesssim C^{|\alpha|} (\alpha!)^{\mu}e^{-c|x|^{1/\nu}}, \qquad x \in \RR.$$
iii) There exist positive constants $A,B$ such that
$$\sup_{x \in \RR}|x^{\beta}f(x)| \lesssim A^{|\beta|}(\beta!)^{\nu} \quad \textrm{and} \quad \sup_{x \in \RR}|\partial^{\alpha}f(x)| \lesssim B^{|\alpha|}(\alpha!)^{\mu}.$$\\
iv) There exists $\varepsilon>0$ such that
$$|f(x)| \lesssim e^{-\varepsilon|x|^{1/\nu}}, \quad x \in \RR, \quad \textrm{and} \quad |\hat{f}(\xi)| \lesssim e^{-\varepsilon|\xi|^{1/\mu}}, \quad \xi \in \RR.$$
\end{lem}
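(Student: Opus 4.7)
The plan is to establish the cyclic chain (i) $\Rightarrow$ (iii) $\Rightarrow$ (iv) $\Rightarrow$ (ii) $\Rightarrow$ (i), exploiting Fourier-transform duality together with Stirling-type optimizations. The two essentially trivial directions are (i) $\Rightarrow$ (iii), obtained by specializing $\beta = 0$, respectively $\alpha = 0$, in the defining estimate of $\mathcal{S}^\mu_\nu(\RR)$; and (ii) $\Rightarrow$ (i), which reduces to the elementary bound $\sup_{t \geq 0} t^{|\alpha|}e^{-ct^{1/\nu}} \lesssim C^{|\alpha|}(\alpha!)^\nu$ (Stirling applied at the maximizer $t \sim |\alpha|^\nu$) after pulling the Gevrey factor $C^{|\beta|}(\beta!)^\mu$ out of $|x^\alpha \partial^\beta f(x)|$.

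For (iii) $\Rightarrow$ (iv), the spatial decay of $f$ comes from the moment bound $\sup_x |x^\beta f(x)| \lesssim A^{|\beta|}(\beta!)^\nu$: this gives $|f(x)| \lesssim A^N(N!)^\nu(1+|x|)^{-N}$ for every $N$, and optimizing $N \sim \delta|x|^{1/\nu}$ produces $|f(x)| \lesssim e^{-\varepsilon|x|^{1/\nu}}$. For the Fourier-side decay, I would write $\xi^\alpha \hat f(\xi) = (-i)^{|\alpha|}\widehat{\partial^\alpha f}(\xi)$ and seek an $L^1$-bound $\|\partial^\alpha f\|_{L^1} \lesssim C^{|\alpha|}(\alpha!)^\mu$. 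To obtain this, I would first upgrade the two \emph{separate} bounds in (iii) to a \emph{joint} bound
\[
\sup_x (1+|x|)^N |\partial^\alpha f(x)| \lesssim C^{|\alpha|+N}(\alpha!)^\mu (N!)^\nu,
\]
proved by induction on $|\alpha|$ from the Leibniz identity
\[
x^\gamma \partial^\alpha f = \partial^\alpha(x^\gamma f) - \sum_{0 < \delta \leq \min(\alpha,\gamma)} \binom{\alpha}{\delta}\frac{\gamma!}{(\gamma-\delta)!} x^{\gamma-\delta}\partial^{\alpha-\delta}f
\]
combined with the two uniform bounds of (iii). Choosing $N > d$ makes $\partial^\alpha f$ integrable with a controlled norm, and the same Stirling optimization applied on the Fourier side produces $|\hat f(\xi)| \lesssim e^{-\varepsilon|\xi|^{1/\mu}}$.

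Finally, for (iv) $\Rightarrow$ (ii), I would use Fourier inversion to get
\[
|\partial^\alpha f(x)| \leq (2\pi)^{-d/2}\int|\xi|^{|\alpha|}|\hat f(\xi)|\,d\xi \lesssim \int |\xi|^{|\alpha|}e^{-\varepsilon|\xi|^{1/\mu}}\,d\xi \lesssim C^{|\alpha|}(\alpha!)^\mu
\]
by a standard Gamma-function/Stirling computation. To insert the spatial decay $e^{-c|x|^{1/\nu}}$, expand $e^{c|x|^{1/\nu}} = \sum_k (c|x|^{1/\nu})^k/k!$ and use Fourier/integration-by-parts bounds of the form $|x|^{k/\nu}|\partial^\alpha f(x)| \lesssim C^{|\alpha|+k}(\alpha!)^\mu(k!)^\nu$; the identity $\nu \cdot (1/\nu) = 1$ ensures the resulting series is a convergent geometric one for $c$ small enough.

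The main technical obstacle is the joint bound in Step 2, namely promoting the two separate $L^\infty$-estimates of (iii) into a single estimate involving both polynomial weight and derivative order. Once this joint bound is established, all remaining implications reduce to routine Stirling optimization and Fourier-inversion manipulations familiar from the classical Gelfand-Shilov theory \cite{GS,CCK}.
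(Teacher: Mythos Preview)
The paper does not actually prove this lemma: it is stated as a known characterization and the reader is referred to \cite{CCK, GS}. So there is no ``paper's proof'' to compare against, and your proposal must stand on its own.

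Your cyclic scheme is reasonable and the trivial implications (i)$\Rightarrow$(iii) and (ii)$\Rightarrow$(i) are fine. The genuine gap is the one you yourself flag as the ``main technical obstacle'': upgrading the two \emph{separate} bounds in (iii) to the \emph{joint} estimate $\sup_x |x^\gamma\partial^\alpha f|\lesssim C^{|\alpha|+|\gamma|}(\alpha!)^\mu(\gamma!)^\nu$. The Leibniz identity you display,
\[
x^\gamma\partial^\alpha f=\partial^\alpha(x^\gamma f)-\sum_{0<\delta\le\min(\alpha,\gamma)}\binom{\alpha}{\delta}\frac{\gamma!}{(\gamma-\delta)!}\,x^{\gamma-\delta}\partial^{\alpha-\delta}f,
\]
is of course correct, and the induction hypothesis does control the sum on the right. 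But nothing in (iii) bounds the remaining term $\partial^\alpha(x^\gamma f)$: the second estimate in (iii) concerns $\partial^\alpha f$, not $\partial^\alpha$ applied to $x^\gamma f$. Expanding $\partial^\alpha(x^\gamma f)$ by Leibniz simply returns $x^\gamma\partial^\alpha f$ plus the same lower-order sum, so the identity is tautological for this purpose and the induction never closes. The same circularity reappears in your (iv)$\Rightarrow$(ii) step, where you invoke bounds of the form $|x|^{k/\nu}|\partial^\alpha f(x)|\lesssim C^{|\alpha|+k}(\alpha!)^\mu (k!)^\nu$; these are precisely the joint bounds you have not yet established (and $|x|^{k/\nu}$ is not even a polynomial, so ``integration by parts'' needs an intermediate step).

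What is actually needed, and what is done in \cite{GS} and \cite{CCK}, is an interpolation device that extracts pointwise decay of $\partial^\alpha f$ from the pointwise decay of $f$ together with the uniform bound on a \emph{higher} derivative. In \cite{GS} this is the Gorny--Cartan/Landau--Kolmogorov inequality: on an interval of length $h$, $|\partial^p f(x)|\lesssim h^{-p}\sup|f|+h^{q-p}\sup|\partial^q f|$ for $p<q$, which after choosing $q=2p$ and optimizing $h$ against $|x|$ yields $|\partial^\alpha f(x)|\lesssim C^{|\alpha|}(\alpha!)^\mu e^{-c|x|^{1/\nu}}$. The alternative route in \cite{CCK} goes through the Fourier characterization directly, using a regularization argument. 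Either way, the passage (iii)$\Rightarrow$(i) is a genuine theorem (the identity $\mathcal{S}_\nu\cap\mathcal{S}^\mu=\mathcal{S}^\mu_\nu$ in Gelfand--Shilov's notation) and cannot be obtained by Leibniz bookkeeping alone.
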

In this paper, we are concerned with the quasianalytic case, i.e. $\mu<1$. In these spaces, we have additional properties concerning holomorphic extensions:
\begin{prop}
If $f \in \mathcal{S}^{\mu}_{\nu}(\RR), 0<\mu<1, \nu >0,$ then $f$ extends to an
 entire analytic function $f(x+iy)$ in $\mathbb{C}^d$, which satisfies the following estimate:
$$|f(x+iy)|\lesssim e^{-\varepsilon|x|^{\frac{1}{\nu}} + \delta|y|^{\frac{1}{1-\mu}}}, \quad x \in \RR, y \in \RR,$$
for some $\delta,\epsilon>0$. 
\end{prop}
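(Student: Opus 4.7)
The plan is to build the holomorphic extension directly from the Taylor series of $f$, using the characterization (ii) from the previous lemma. Since $f\in\mathcal{S}^{\mu}_{\nu}(\RR)$ satisfies $|\partial^{\alpha}f(x)|\lesssim C^{|\alpha|}(\alpha!)^{\mu}e^{-c|x|^{1/\nu}}$ for some $C,c>0$, and $\mu<1$, the factorial factor is subanalytic and so the series
\begin{equation*}
f(x+iy)\;:=\;\sum_{\alpha\in\non{d}}\frac{(iy)^{\alpha}}{\alpha!}\,\partial^{\alpha}f(x)
\end{equation*}
converges absolutely on all of $\cc{d}$. A standard argument shows that the sum is independent of the chosen base point $x$, so it defines an entire function coinciding with $f$ on $\RR$.

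Next I would estimate the series termwise. Using $|y^{\alpha}|\leq|y|^{|\alpha|}$, $\alpha!\leq|\alpha|!$, and the multinomial identity $\sum_{|\alpha|=k}1/\alpha!=d^{k}/k!$, one obtains
\begin{equation*}
|f(x+iy)|\;\leq\;e^{-c|x|^{1/\nu}}\sum_{k\geq 0}\frac{(Cd|y|)^{k}}{(k!)^{1-\mu}}.
\end{equation*}
The remaining step is to bound the scalar series. By Stirling, $(k!)^{1-\mu}\sim(k/e)^{k(1-\mu)}$, and optimizing $k$ against $t=Cd|y|$ (i.e.\ $k\approx t^{1/(1-\mu)}$) gives
\begin{equation*}
\sum_{k\geq 0}\frac{t^{k}}{(k!)^{1-\mu}}\;\lesssim\;e^{\delta t^{1/(1-\mu)}}
\end{equation*}
for a suitable $\delta>0$ depending only on $\mu$ and $d$. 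Combining the two factors yields the claimed inequality, possibly after replacing $c$ by a smaller $\varepsilon>0$.

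I expect the main technical obstacle to lie precisely in the last step — the passage from the power-series bound $\sum t^{k}/(k!)^{1-\mu}$ to the exponential-type estimate $e^{\delta t^{1/(1-\mu)}}$. One handles this either by the explicit Mittag-Leffler asymptotics or, more elementarily, by splitting the sum at $k_{0}\approx(t/e)^{1/(1-\mu)}$, controlling the tail by a geometric series and the head by its largest term via Stirling. Everything else — termwise absolute convergence, the entire extension, and the decay in $x$ — follows routinely from the Gelfand--Shilov bounds.
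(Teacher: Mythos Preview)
The paper states this proposition without proof; it is a classical fact going back to Gelfand and Shilov, and the paper simply quotes it as background. Your argument is the standard one and is correct.

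One small point of presentation: the way you invoke ``$\alpha!\leq|\alpha|!$'' together with the multinomial identity deserves a word of clarification. What is actually used is
\[
\frac{1}{(\alpha!)^{1-\mu}}=\frac{(\alpha!)^{\mu}}{\alpha!}\leq\frac{(k!)^{\mu}}{\alpha!},\qquad |\alpha|=k,
\]
so that
\[
\sum_{|\alpha|=k}\frac{1}{(\alpha!)^{1-\mu}}\leq (k!)^{\mu}\sum_{|\alpha|=k}\frac{1}{\alpha!}=\frac{d^{k}}{(k!)^{1-\mu}},
\]
which yields your displayed bound. As written, ``$\alpha!\leq|\alpha|!$'' points in the wrong direction for a direct upper bound on $1/(\alpha!)^{1-\mu}$, so making this splitting explicit would remove any ambiguity. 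The remaining estimate $\sum_{k}t^{k}/(k!)^{1-\mu}\lesssim e^{\delta t^{1/(1-\mu)}}$ is indeed the asymptotics of the Mittag--Leffler function $E_{1-\mu}$ and is handled exactly as you describe.
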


\begin{defn}
We denote by $(\mathcal{S}^{\mu}_{\nu})'(\RR)$ the topological dual of $\mathcal{S}^{\mu}_{\nu}(\RR)$ (endowed with the weak topology).\\
More precisely: a linear form $u:\mathcal{S}^{\mu}_{\nu}(\RR)\rightarrow \mathbb{C}$ is in $(\mathcal{S}^{\mu}_{\nu})'(\RR)$ if for every $\varepsilon>0$ there exists a positive constant $C_{\varepsilon}$ such that
	\begin{equation}
		|u(f)|\lesssim C_{\varepsilon}\sup_{\alpha,\beta \in \no^d} \varepsilon^{-|\alpha|-|\beta|} (\alpha!)^{-\nu}(\beta!)^{-\mu}\sup_{x\in\RR}|x^\alpha\partial^\beta f(x)|
	\end{equation}
	for every $ f \in S^{\mu}_{\nu}(\RR).$
\end{defn}

\subsection{Integral transforms on Gelfand-Shilov spaces}

\subsubsection{Fourier transform}
Already in \cite{GS} it was shown that the Gelfand-Shilov spaces are invariant under translations and dilations and that they have the following behaviour under Fourier transforms:

\begin{lem}
The Fourier transformation maps, continuously, $\mathcal{S}^{\mu}_{\nu}(\RR)$ into $\mathcal{S}^{\nu}_{\mu}(\RR)$ and $(\mathcal{S}^{\mu}_{\nu})'(\RR)$ into $(\mathcal{S}^{\nu}_{\mu})'(\RR)$.
\end{lem}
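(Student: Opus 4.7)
The plan is to use the equivalent characterization (iii) of the preceding lemma, which separates the decay from the smoothness estimates and makes the Fourier symmetry between $\mathcal{S}^\mu_\nu(\RR)$ and $\mathcal{S}^\nu_\mu(\RR)$ transparent. Since both spaces are inductive limits of Banach spaces, continuity of the Fourier transform $\mathcal{F}$ on test functions reduces to showing that, for each $C > 0$, $\mathcal{F}$ maps $\mathcal{S}^\mu_{\nu, C}(\RR)$ boundedly into $\mathcal{S}^\nu_{\mu, C'}(\RR)$ for some $C' = C'(C, d, \mu, \nu)$.

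The key ingredients are the two Fourier identities
$$\xi^\beta \hat{f}(\xi) = (-i)^{|\beta|} \widehat{\partial^\beta f}(\xi), \qquad \partial^\alpha \hat{f}(\xi) = (-i)^{|\alpha|} \widehat{x^\alpha f}(\xi),$$
combined with the elementary bound $\|\hat g\|_{L^\infty} \leq \|g\|_{L^1}$; together these reduce the task to estimating $\|\partial^\beta f\|_{L^1}$ and $\|x^\alpha f\|_{L^1}$ in terms of the Gelfand-Shilov seminorm of $f$. To pass from the pointwise bounds \eqref{eq:gsdef} to integral bounds, I insert the integrable weight $(1+|x|^2)^{-d}$ and expand $(1+|x|^2)^d = \sum_{|\delta|\leq d} c_\delta x^{2\delta}$ as a finite sum of monomials. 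Each resulting term $|x^{2\delta}\partial^\beta f(x)|$ is controlled via the defining seminorm of $\mathcal{S}^\mu_{\nu, C}(\RR)$, yielding
$$\|\partial^\beta f\|_{L^1} \lesssim \tilde C^{|\beta|}(\beta!)^\mu,$$
and, by the symmetric computation, $\|x^\alpha f\|_{L^1} \lesssim \tilde C^{|\alpha|}(\alpha!)^\nu$, for a constant $\tilde C$ depending on $C, d, \nu$ but not on $\alpha, \beta$. Applying characterization (iii) with the roles of $\mu$ and $\nu$ interchanged then places $\hat f$ in $\mathcal{S}^\nu_\mu(\RR)$, with linear dependence on the seminorm of $f$, establishing the continuity claim on test functions.

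The statement for ultradistributions follows by duality: for $u \in (\mathcal{S}^\mu_\nu)'(\RR)$ I define $\hat{u}(\varphi) := u(\hat{\varphi})$ for $\varphi \in \mathcal{S}^\nu_\mu(\RR)$; applying the first half of the argument with the roles of $\mu$ and $\nu$ exchanged gives continuity of $\mathcal{F}:\mathcal{S}^\nu_\mu(\RR) \to \mathcal{S}^\mu_\nu(\RR)$, whence $\hat u$ is a continuous linear form on $\mathcal{S}^\nu_\mu(\RR)$. Weak continuity of $u \mapsto \hat u$ is immediate from the definition. No substantial obstacle arises; the only bookkeeping worth noting is ensuring that the weight $(1+|x|^2)^d$ contributes only a dimension-dependent, $\beta$-independent factor, so that the final bounds retain the geometric growth in $|\alpha|+|\beta|$ with the correct factorial exponents required by the target space.
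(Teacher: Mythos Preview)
Your argument is correct. The paper does not supply its own proof of this lemma; it merely attributes the result to \cite{GS} and states it without argument, so there is no in-paper proof to compare against. Your approach via characterization (iii), the Fourier exchange identities, and the $L^1\to L^\infty$ bound is the classical route and is essentially how the result is established in the Gelfand--Shilov literature. The only minor point worth making explicit is that the implication (iii) $\Rightarrow$ (i) in the preceding lemma is quantitative (the constant $C'$ in the target Banach space depends only on the pair $(A,B)$ from (iii) and on $d,\mu,\nu$), which is what you implicitly use to obtain boundedness $\mathcal{S}^\mu_{\nu,C}\to\mathcal{S}^\nu_{\mu,C'}$ rather than mere set-theoretic inclusion; this is standard and poses no difficulty.
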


To have full metaplectic invariance, in the sequel we shall restrict ourselves to the case of $\mu=\nu=:\theta$ with $\frac{1}{2} \leq \theta<1$ and we shall write $\mathcal{S}_\theta(\RR), \mathcal{S}_\theta^\prime(\RR)$ to denote the space $\mathcal{S}^\theta_\theta(\RR)$ and its dual. \\

There are several other transforms that have good properties with respect to Gelfand-Shilov spaces. One of them is the following Bargmann- or FBI-transform, stemming from \cite{Hormander1}:

\subsubsection{FBI-transform with general phases}

\begin{defn} \label{definitionphase}
Let $\varphi$ be a quadratic form in $\CC\times\RR$, i.e.
$\varphi(z,y)=\frac{1}{2}\langle Az,z\rangle+\langle Bz,y\rangle+\frac{1}{2}\langle Cy,y\rangle$ with
 $A$, $B$ and $C$ complex $d\times d$-matrices such that $A$ and $C$ are symmetric.  
Assume further non-degeneracy conditions, i.e. $\det B=\det \varphi^{\prime\prime}_{zy}\neq0$, and $\Im C=\Im \varphi^{\prime\prime}_{yy}>0$.\\
For $u\in\GSd{\theta}(\RR)$ we denote by 
	\begin{equation}
	\label{eq:fbidef2}
		T_\varphi(u)(z)=c_\varphi \langle u_y, e^{i\varphi(z,y)}\rangle,
	\end{equation}
the FBI transform with phase $\varphi$ of $u$. Herein, $$c_\varphi:=2^{-\frac{d}{2}}\pi^{-\frac{3d}{4}}(\det \Im C)^{-\frac{1}{4}}|\det B|$$
 and the pairing is with respect to the $y$-variable.
\end{defn}
In the sequel we shall denote $\Phi(z)= \max (-\Im \varphi(y,x))$ and by $y(z)$ the point where such a maximum is attained. We then observe that $y(z)$ is a linear function of $z$ and that $$-\Im \varphi(y,z) -\Phi(z)=-\frac{1}{2}\langle (\Im C)(y-y(z)), y-y(z) \rangle.$$
\begin{rem}
\label{rem:stdphase}
The most interesting case is that of the phase $$\phi(z,y):=\langle z,y\rangle + \frac{i}{2} |y|^2 ,$$ for which we have $c_\phi:=2^{-\frac{d}{2}}\pi^{-\frac{3d}{4}}$. In this case, $T_{\phi}$ turns out to coincide with the inverse Fourier-Laplace transform of
$v(y)=e^{-\frac{1}{2}|y|^{2}}u(y).$ Moreover, in this case we have $\Phi(z)=\frac{1}{2}|\Im(z)|^2$.
\end{rem}
We now study the properties of the Bargmann transform of tempered ultradistributions. Let us first recall a classic result, which will be useful in the sequel: The Bargmann transform takes $L^2$-functions to entire functions that are $L^2$ with respect to a Gaussian measure, the so-called Bargmann-Fock space. A proof in our context can be found in \cite{Hormander1}.
\begin{lem} \label{L2cont}
Let $\varphi$ be as in Definition \ref{definitionphase}. Then 
\begin{equation}\label{isometry}\int |T_{\varphi}u(z)|^2 e^{-2\Phi(z)}d\lambda (z) = \|u \|_{L^2(\RR)}. \end{equation}
where $d\lambda(z)$ denotes the Lebesgue measure on $\mathbb{C}^d$. In other words, the map $u \to T_{\varphi}u$ is an isometry from $L^2(\RR)$ into $L^2(\mathbb{C}^d, d\lambda(z) e^{-2\Phi(z)}) \cap \mathcal{H}(\mathbb{C}^d)$, where $\mathcal{H}(\mathbb{C}^d)$ denotes the space of entire functions on $\CC.$
\end{lem}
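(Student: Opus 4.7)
The plan is to establish the identity first on the dense subspace $\GS{\theta}(\RR) \subset L^2(\RR)$ and then extend by continuity. I find it convenient to first polarize and prove the sesquilinear version
\begin{equation*}
\int T_{\varphi}u(z)\, \overline{T_{\varphi}v(z)}\, e^{-2\Phi(z)}\, d\lambda(z) = \langle u,v\rangle_{L^2(\RR)}
\end{equation*}
for $u,v \in \GS{\theta}(\RR)$, since the square $\|u\|_{L^2(\RR)}^2$ is then recovered by setting $v=u$.

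On this dense subspace the pointwise formula $T_\varphi u(z) = c_\varphi \int e^{i\varphi(z,y)} u(y)\, dy$ is valid, both $T_\varphi u(z)$ and $T_\varphi v(z)$ are entire in $z$ of controlled growth, and the Gaussian weight $e^{-2\Phi(z)}$ together with the super-exponential decay of $u,v$ renders Fubini applicable. The left-hand side therefore equals
\begin{equation*}
c_\varphi^2 \iint K(y,y')\, u(y)\, \overline{v(y')}\, dy\, dy', \qquad K(y,y') := \int e^{i\varphi(z,y) - i\overline{\varphi(z,y')} - 2\Phi(z)}\, d\lambda(z).
\end{equation*}
The core task is then to evaluate $K$ as a Gaussian integral in $z = t + is \in \RR\times\RR$. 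Using the identity stated just before the lemma,
\begin{equation*}
-\Im\varphi(z,y) - \Phi(z) = -\tfrac{1}{2}\langle (\Im C)(y - y(z)), y - y(z)\rangle,
\end{equation*}
and its analogue for $y'$, the real part of the exponent in $K$ is a negative-definite quadratic form in $(y - y(z), y' - y(z))$, while the imaginary part is a linear expression in $z$ that encodes $\Re\varphi(z,y) - \Re\varphi(z,y')$. Completing the square in $z$ and applying the standard Gaussian formula should yield $K(y,y') = c_\varphi^{-2}\delta(y - y')$; the normalization $c_\varphi = 2^{-d/2}\pi^{-3d/4}(\det \Im C)^{-1/4}|\det B|$ is chosen precisely so that the determinants and powers of $\pi$ produced by the two Gaussian integrations cancel.

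The main obstacle is this explicit Gaussian bookkeeping for a general quadratic phase. A cleaner route that I would prefer in practice is to first prove the identity for the standard phase $\phi(z,y) = \langle z,y\rangle + \tfrac{i}{2}|y|^2$ of Remark \ref{rem:stdphase}: there $T_\phi u$ is, up to the constant $c_\phi$, the inverse Fourier-Laplace transform of $v(y) = e^{-|y|^2/2} u(y)$, and after writing $z = x + i\xi$ the identity reduces via Plancherel's theorem in $x$ to the elementary Gaussian computation $\int e^{-|y + \xi|^2}\, d\xi = \pi^{d/2}$. For general $\varphi$, a $\co$-linear change of variables in $z$ adapted to $B$ and $\Im C$, combined with a metaplectic transformation on $\RR$ that absorbs the $\langle (\Re C) y,y\rangle$ term, reduces the general phase to the standard one; the Jacobian factors of this reduction are exactly those packaged into $c_\varphi$. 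Once the isometry is known on $\GS{\theta}(\RR)$, it extends by density to all of $L^2(\RR)$, and the range lies in $\mathcal{H}(\mathbb{C}^d)$ because an $L^2(e^{-2\Phi}d\lambda)$-limit of entire functions is entire by a weighted version of Cauchy's integral formula.
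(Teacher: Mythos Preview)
The paper does not actually prove this lemma; it simply states that ``a proof in our context can be found in \cite{Hormander1}''. Your second, ``cleaner'' route---establishing the identity for the standard phase $\phi$ via Plancherel in $x=\Re z$ followed by the elementary Gaussian integral in $\xi=\Im z$, and then reducing a general phase to the standard one by a $\co$-linear change of variable in $z$ together with the unitary multiplication $u\mapsto e^{-\frac{i}{2}\langle(\Re C)y,y\rangle}u$ on $\RR$---is correct and is essentially H\"ormander's argument; the paper itself invokes exactly this reduction (``we can assume $A=0$ and $\Re C=0$'') in the proof of Theorem~\ref{thm:wfprop}.

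Your first approach, however, has a gap in the Fubini step. The modulus of the triple integrand is
\[
c_\varphi^2\, e^{-\frac{1}{2}\langle (\Im C)(y-y(z)),\, y-y(z)\rangle -\frac{1}{2}\langle (\Im C)(y'-y(z)),\, y'-y(z)\rangle}\, |u(y)|\,|v(y')|,
\]
which depends on $z$ only through $y(z)$, a real-linear map of rank $d$ from $\CC\cong\rr{2d}$ to $\RR$ (for the standard phase, $y(z)=-\Im z$). Along the $d$-dimensional kernel of $z\mapsto y(z)$ there is no decay whatsoever, so the integral of the modulus over $z$ diverges and Fubini is not applicable as stated. The kernel $K(y,y')$ is therefore not an absolutely convergent integral but an oscillatory one; the $\delta(y-y')$ you expect comes precisely from a Fourier integral in those flat $z$-directions. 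This is repairable---insert a regulariser $e^{-\epsilon|z|^2}$ and let $\epsilon\to 0^+$, or integrate first in $\Re z$ via Plancherel as in your second route---but the justification as written does not stand.
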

The following statement is an analogous version with respect to Gelfand-Shilov functions and tempered ultradistributions. We include a proof for self-containedness, but also refer to \cite{Grochenig2, Teofanov, Toft} for similar statements.
\begin{thm}
\label{thm:wfprop}
Let $u\in \GSd{\theta}(\RR)$ and let $\varphi$ be a FBI-phase. Then $T_\varphi u$ is an entire analytic function which satisfies for any $\alpha \in \no^d$:
\begin{align} \label{eq:gsdstf}
u\in\GSd{\theta}(\RR)&\Rightarrow \forall \varepsilon>0\ \exists C_\varepsilon>0\text{ s.t.} &|\partial_z^\alpha T_\varphi u(z)|&\leq  C_\varepsilon^{|\alpha|+1}\alpha!^\theta\ e^{\varepsilon |z|^{1/\theta}+\Phi(z)} \\ \label{eq:gsstf}
u\in\GS{\theta}(\RR)&\Rightarrow \exists \varepsilon>0 , C>0 \text{ s.t.} &|\partial_z^\alpha T_\varphi u(z)|&\leq C^{|\alpha|+1}\alpha!^\theta \ e^{-\varepsilon |z|^{1/\theta}+\Phi(z)}
\end{align}
Conversely, if there is an analytic function $U$ satisfying the estimate in \eqref{eq:gsdstf} (respectively \eqref{eq:gsstf}), then there exists a unique $u\in \GSd{\theta}(\RR)$ (respectively $u\in\GS{\theta}(\RR)$) s.t. $U=T_\varphi u$.\\
%
\end{thm}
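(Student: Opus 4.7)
The plan is to split the statement into the forward and converse directions, working first with the standard phase $\phi$ of Remark~\ref{rem:stdphase} and then reducing the general case. To begin, for fixed $z\in\CC$ the function $y\mapsto e^{i\varphi(z,y)}$ is a Gaussian (from $\Im C>0$) times a quadratic chirp, so under the standing hypothesis $\theta\geq 1/2$ it lies in $\GS{\theta}(\RR)$; this makes $T_\varphi u(z)=c_\varphi\langle u_y,e^{i\varphi(z,y)}\rangle$ well-defined for $u\in\GSd{\theta}$, and uniform $\GS{\theta}$-seminorm bounds on the $z$-difference quotients yield complex differentiability, so that $T_\varphi u\in\mathcal{H}(\CC)$.

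For the $\alpha=0$ bound with the standard phase, writing $z=x+i\eta$ and completing the square yields
\begin{equation*}
T_\phi u(z)=c_\phi\,e^{\Phi(z)}\,e^{-i\langle x,\eta\rangle}\,V_g u(\eta,-x),
\end{equation*}
where $V_g u(\eta,\zeta)=\int u(y)\,g(y-\eta)\,e^{-i\langle\zeta,y\rangle}\,dy$ is the short-time Fourier transform with Gaussian window $g(y)=e^{-|y|^2/2}$. Since $|z|^{1/\theta}=(|x|^2+|\eta|^2)^{1/(2\theta)}$, the cases $\alpha=0$ of \eqref{eq:gsdstf} and \eqref{eq:gsstf} reduce to the known time-frequency characterizations of $\GSd{\theta}$ and $\GS{\theta}$ through the growth, respectively decay, of $V_g u$; see \cite{Teofanov,Toft,Grochenig2}. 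A general FBI phase is obtained from $\phi$ by an affine substitution in $y$ and a complex-linear substitution in $z$, both of which preserve $\GS{\theta}$-estimates and transform $\Phi$ consistently. The derivative bounds then follow from Cauchy's formula with uniform polyradius $r$:
\begin{equation*}
|\partial_z^\alpha T_\varphi u(z)|\leq \alpha!\,r^{-|\alpha|}\sup_{|\zeta-z|_\infty\leq r}|T_\varphi u(\zeta)|.
\end{equation*}
Choosing $r\sim|\alpha|^{1-\theta}$ converts $\alpha!\,r^{-|\alpha|}$ into $C^{|\alpha|+1}\alpha!^\theta$ by Stirling; the cross term $r|z|$ coming from $\Phi(\zeta)-\Phi(z)$ is absorbed by Young's inequality with conjugate exponents $1/\theta$ and $1/(1-\theta)$ into $\varepsilon|z|^{1/\theta}+C_\varepsilon|\alpha|$, while the $r^2$ remainder is dominated by $C^{|\alpha|}$ precisely when $2(1-\theta)\leq 1$, which is the standing hypothesis $\theta\geq 1/2$.

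For the converse, given an entire function $U$ satisfying \eqref{eq:gsdstf} I would define a candidate
\begin{equation*}
\langle u,\psi\rangle:=\int_{\CC} U(z)\,\overline{T_\varphi\psi(z)}\,e^{-2\Phi(z)}\,d\lambda(z),\qquad \psi\in\GS{\theta}(\RR),
\end{equation*}
motivated by the reproducing formula behind \lemref{L2cont}. Applying the forward estimate \eqref{eq:gsstf} to $\psi$ gives $|T_\varphi\psi(z)|\lesssim e^{-\varepsilon|z|^{1/\theta}+\Phi(z)}$, which combined with the hypothesis on $U$ makes the integral absolutely convergent and continuous in the $\GS{\theta}$-topology of $\psi$, so $u\in\GSd{\theta}$. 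A density argument from \lemref{L2cont} yields $T_\varphi u=U$, and uniqueness follows from injectivity of $T_\varphi$, itself inherited from the $L^2$-isometry. The stronger decay of $U$ in \eqref{eq:gsstf} places $u$ directly in $\GS{\theta}$ by the corresponding bound on the derivatives of the pairing.

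The main obstacle I foresee is the rigorous translation between FBI estimates and Gelfand-Shilov regularity in the quasianalytic regime $\theta<1$, where classical cut-off constructions are unavailable. The Bargmann-Fock framework of \lemref{L2cont} circumvents this in the forward direction, but the converse requires careful bookkeeping since no explicit inversion formula is available a priori on $\GSd{\theta}$; establishing $T_\varphi u=U$ rests on density of the range of $T_\varphi$ in the appropriate weighted $L^2$-space, which must be extracted from the forward estimates.
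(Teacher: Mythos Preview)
Your proposal is correct and tracks the paper's proof closely in two of its three components: the Cauchy-estimate argument for passing from $\alpha=0$ to general $\alpha$ (with polyradius $r\sim|\alpha|^{1-\theta}$ and Young's inequality) is exactly the content of the paper's Lemma~\ref{lem:cauchyineq}, and your construction of $u$ in the converse direction via the pairing $\langle u,\psi\rangle=\int U\,\overline{T_\varphi\psi}\,e^{-2\Phi}\,d\lambda$ is precisely the paper's route through the $L^2$-adjoint $T_\varphi^*$ and Lemma~\ref{L2cont}.

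Where you differ is in the forward $\alpha=0$ estimate. You reduce to the standard phase, identify $e^{-\Phi}T_\phi u$ with the STFT $V_g u$ up to a unimodular factor, and then invoke the known time-frequency characterizations of $\GS{\theta}$ and $\GSd{\theta}$ from \cite{Teofanov,Toft,Grochenig2}. The paper instead carries out a direct, self-contained computation: after the reduction (following \cite{Hormander1}) to a phase with $A=0$ and $\Re C=0$, it estimates $|y^\beta D_y^\alpha e^{i\varphi(y,z)}|$ by hand using Leibniz and the Gaussian bound $|D_y^\alpha e^{-\frac12\langle Cy,y\rangle}|\leq M^{|\alpha|+1}\alpha!^{1/2}e^{-\frac12\langle Cy,y\rangle}$ to obtain \eqref{eq:gsdstf}, and for \eqref{eq:gsstf} it integrates by parts against $(Bz)^\alpha$. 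Your route is shorter but outsources the substance; the paper's route makes the dependence on $\theta\geq 1/2$ transparent at the level of elementary inequalities and avoids importing the STFT machinery. One small caution on your reduction step: while the $A$-term is harmless and $B$ is absorbed by a linear change in $z$, reducing a general $C$ all the way to $iI$ requires an affine change in $y$ that also modifies the $\langle Bz,y\rangle$ term, so the bookkeeping of how $\Phi$ transforms should be made explicit; the paper sidesteps this by working with a general positive-definite $C$ throughout.
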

We can reduce the proof to the case when $\alpha=0$ thanks to the following Lemma, a variant of Cauchy's inequality.
\begin{lem}
\label{lem:cauchyineq}
Let $g(z)$ be an entire function and let $z_0\in\mathbb{C}^d \setminus \{0\}.$ Let open conic subsets of $\mathbb{C}^d \setminus \{0\}$ containing $z_0$ be denoted by $\Gamma$. Then the following conditions are equivalent:
\\
i) $\exists\Gamma\ \forall \varepsilon>0\ \exists C_\varepsilon>0\text{ s.t.} \left| g(z)\right|\leq  C_\varepsilon e^{\varepsilon |z|^{1/\theta}+\Phi(z)}\quad \forall\ z\in\Gamma$\\
ii) $\exists\Gamma\ \forall \varepsilon>0\ \exists C_\varepsilon>0\text{ s.t.} \left|\partial_z^\alpha g(z)\right|\leq  C_\varepsilon^{|\alpha|+1} \alpha!^\theta \ e^{\varepsilon |z|^{1/\theta}+\Phi(z)}\quad \forall\ z\in\Gamma,\alpha\in\NN.$\\
Similarly, the following two conditions are equivalent:\\
a) $\exists\Gamma\ \exists \varepsilon>0\ \exists C>0\text{ s.t.}\left| g(z)\right|\leq  C e^{-\varepsilon |z|^{1/\theta}+\Phi(z)}\quad \forall\ z\in\Gamma$\\
b) $\exists\Gamma\ \exists \varepsilon>0\ \exists C>0\text{ s.t.} \left|\partial_z^\alpha g(z)\right|\leq  C^{|\alpha|+1} \alpha!^\theta \ e^{-\varepsilon |z|^{1/\theta}+\Phi(z)}\quad \forall\ z\in\Gamma,\alpha\in\NN.$
\end{lem}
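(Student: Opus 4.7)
The implications (ii)$\Rightarrow$(i) and (b)$\Rightarrow$(a) are trivial by specializing to $\alpha=0$. For the converse directions I would apply the Cauchy integral formula on polydiscs of carefully chosen radius, in the spirit of the classical Gelfand--Shilov Cauchy estimates. Since the hypothesis lives only on a cone, I first pass to a smaller open subcone $\Gamma'\subset\subset\Gamma$ still containing $z_0$ such that $\mathrm{dist}_\infty(z,\mathbb{C}^d\setminus\Gamma)\geq \delta|z|$ holds uniformly on $\Gamma'$ for some fixed $\delta>0$. For $z\in\Gamma'$ the Cauchy polydisc formula of radius $r$, constrained by $r\sqrt d\leq \delta|z|$ so that the polydisc stays inside $\Gamma$, yields
\[
|\partial^\alpha g(z)|\leq \frac{\alpha!}{r^{|\alpha|}}\sup_{|w-z|_\infty\leq r}|g(w)|.
\]

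The key choice is $r=\rho(|\alpha|+1)^{1-\theta}$, for which Stirling's formula gives the desired prefactor estimate $\alpha!/r^{|\alpha|}\leq C^{|\alpha|+1}\alpha!^\theta$. To control the supremum by the hypothesis, I would use that, by convexity, $|w|^{1/\theta}\leq 2^{1/\theta-1}(|z|^{1/\theta}+(r\sqrt d)^{1/\theta})$, and the assumption $\theta\geq 1/2$ ensures $(1-\theta)/\theta\leq 1$, so that $r^{1/\theta}\lesssim |\alpha|+1$ is absorbed into $C^{|\alpha|+1}$. For the weight, $\Phi$ being a real quadratic form gives $\Phi(w)\leq \Phi(z)+c(|z|r+r^2)$; the cross term $|z|r$ is the delicate part and is split by Young's inequality with the conjugate exponents $1/\theta$ and $1/(1-\theta)$, yielding $|z|r\leq \eta|z|^{1/\theta}+C_\eta r^{1/(1-\theta)}$ with $\eta$ as small as we like, while $r^{1/(1-\theta)}\lesssim |\alpha|+1$ is again absorbed. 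In the regime where the cone constraint $r\leq \delta|z|/\sqrt d$ is more restrictive than $\rho(|\alpha|+1)^{1-\theta}$, that is when $|z|$ is bounded by a multiple of $(|\alpha|+1)^{1-\theta}$, both sides of the desired inequality are bounded by constants times $\alpha!^\theta$ and the conclusion follows from the entirety of $g$ by a direct adjustment of the constants.

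The argument for (a)$\Rightarrow$(b) is structurally identical up to a sign: $|w|\geq |z|/2$ whenever $|z|\geq 2r\sqrt d$, so the exponential decay survives with constant $\varepsilon$ reduced only by a factor of $2^{-1/\theta}$, which is harmless because $\varepsilon$ is only required to exist. The main obstacle I foresee is the absorption of the $\Phi$-contribution on the polydisc: $\Phi(z)$ is quadratic while the natural Cauchy increment is $|z|r$, and without the Young-balance above these terms cannot be cleanly disentangled. This is also where the hypothesis $\theta\geq 1/2$ enters essentially, since for $\theta<1/2$ the exponent $(1-\theta)/\theta$ exceeds $1$ and the derivative bound would acquire a super-exponential $|\alpha|$-dependence incompatible with the Gelfand--Shilov factor $\alpha!^\theta$.
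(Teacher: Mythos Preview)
Your proposal is correct and follows essentially the same approach as the paper: apply the Cauchy integral formula on a polydisc, use Young's inequality with conjugate exponents $1/\theta$ and $1/(1-\theta)$ to split the cross term $|z|\cdot r$ coming from the quadratic $\Phi$, take the radius $r\sim(|\alpha|)^{1-\theta}$, and invoke Stirling to produce the factor $\alpha!^\theta$. The paper reaches the same radius by minimizing $\alpha!\,\delta^{-|\alpha|}e^{c\delta^{1/(1-\theta)}}$ over $\delta$, whereas you prescribe it directly; your treatment of the cone constraint (shrinking to $\Gamma'\subset\subset\Gamma$ and handling separately the regime $|z|\lesssim(|\alpha|+1)^{1-\theta}$) is in fact more explicit than the paper's, which simply asserts that for fixed $\delta$ one may take $|z|$ large enough for the polydisc to lie in $\Gamma$.
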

\begin{proof}
The proof is a variant of a calculation done in \cite[Section 5.2]{GS}.
Let $g(z)$ satisfy i) on an open cone $\Gamma$ and let $z\in \Gamma.$  Then, by Cauchy's formula we can write
$$\partial_z^\alpha g(z)= \frac{\alpha!}{(2\pi i)^d}\int_{\partial \mathbb{B}_\delta(z)} g(a) \prod_{i=1}^d (a_i-z_i)^{-1-\alpha_i}\ da,$$
where we denote by $\mathbb{B}_\delta(z)$ the polydisc of radius $\delta$ centered in $z.$ For fixed $\delta$ we can assume $|z|$ large enough 
so that $\mathbb{B}_\delta(z) \subset \Gamma.$ As $\Phi(z)$ is a polynomial in $z,\ \overline{z}$ of degree $2$ we can estimate it on $\partial\mathbb{B}_\delta(z)$ by Taylor's formula and Young's inequality for products:
$$\Phi(a)\leq \Phi(z)+C_1 |z|\delta + C_2 \delta^2\leq \Phi(z)+\varepsilon |z|^{1/\theta} + c_\epsilon \delta^{1/(1-\theta)},$$
for any $\epsilon>0$. Then, using i) we can estimate as follows
\begin{align*}
|\partial_z^\alpha g(z)| &\leq \alpha!\sup_{a \in \partial \mathbb{B}_\delta(z)}|g(a)|   \\
&\leq \sup_{a\in \partial \mathbb{B}_\delta(z)} \frac{\alpha!\ C_{\varepsilon}}{|\delta|^{|\alpha|}}  e^{\varepsilon |a|^{1/\theta}+\Phi(a)} \\
&\leq \frac{\alpha!\ C_{\varepsilon^\prime}}{|\delta|^{|\alpha|}}  e^{\varepsilon^\prime |z|^{1/\theta}+\Phi(z)} e^{c_\varepsilon^\prime \delta^{1/(1-\theta)}}\\
\end{align*}
We now pick $\delta=\delta_0$ to minimize $ \frac{\alpha!\ C_{\varepsilon^\prime}}{|\delta|^{|\alpha|}} e^{c_\varepsilon^\prime \delta^{1/(1-\theta)}}:=C(\varepsilon,\alpha,\delta).$
We have $\delta_0=\left(\frac{|\alpha|(1-\theta)}{c_\varepsilon^\prime}\right)^{1-\theta}$ and using Stirling's formula we obtain $$C(\varepsilon,\alpha,\delta_0)\leq \left(C_\epsilon^{\prime\prime}\right)^{|\alpha|+1}\alpha!^\theta,$$ which proves estimate ii).\\
The second part of the lemma follows in complete analogy. 
\end{proof}
\begin{proof}[Proof of \thmref{thm:wfprop}]
Following \cite{Hormander1}, we can assume $A=0$ and $\Re C =0$ in the definition of $\varphi$. By a change of notation this leads to consider
a phase function of the form
$$\varphi(y,z)= \langle Bz,y\rangle +\frac{i}{2}\langle Cy,y \rangle,$$
where $B$ is a non degenerate matrix and $C$ is positive definite. Let now $u \in \GSd{\theta }(\RR).$ We have that for every $\varepsilon >0$ there exists $C_{\varepsilon}>0$ such that
$$|T_{\varphi}u(z)|= c_\varphi |\langle u_y, e^{i\varphi(\cdot, z)}\rangle | \leq C_{\varepsilon} \sup_{\alpha, \beta \in \N^d}\varepsilon^{-|\alpha|-|\beta|}(\alpha!\beta!)^{-\theta} \sup_{y \in \RR} |y^{\beta}D_y^{\alpha}e^{i\varphi(y,z)}|.$$
Now observe that $e^{-\frac{1}{2}\langle Cy,y\rangle }\in \mathcal{S}_{1/2}(\RR)$ and that $$|D_y^\alpha e^{-\frac{1}{2}\langle Cy,y\rangle } |\leq M^{|\alpha|+1}\alpha!^{1/2}e^{-\frac{1}{2}\langle Cy,y\rangle }$$ for some positive constant $M$. Then we have
\begin{multline*}
|y^{\beta}D_y^{\alpha}e^{i\varphi(y,z)}| \leq |y|^{|\beta|} \sum_{\gamma \leq \alpha } \binom{\alpha}{\gamma} |(Bz)^{\gamma} e^{i\langle Bz,y\rangle}D_y^{\alpha-\gamma}e^{-\frac{1}{2}\langle Cy,y\rangle } | \\ 
\leq C_1^{|\alpha|+1} \sum_{\gamma \leq \alpha } \binom{\alpha}{\gamma}|y|^{|\beta|}|z|^{|\gamma|}(\alpha-\gamma)!^{1/2}e^{\Phi(z)-\frac{1}{2}\langle  C(y-y(z)), y-y(z) \rangle }.
\end{multline*}
Since $\theta \geq 1/2$ and $y(z)$ is a linear function of $z$ we have that for every $\varepsilon'>0$ there exists $C_{\varepsilon'}>0$ such that
\begin{multline*}
|z|^{|\gamma|}\sup_{y \in \RR} |y|^{|\beta|}e^{-\frac{1}{2}\langle C(y-y(z)), y-y(z) \rangle} = |z|^{|\gamma|}\sup_{y \in \RR} (|y+y(z)|)^{|\beta|}e^{-\frac{1}{2}\langle Cy, y \rangle} \\ \leq C\sup_{y \in \RR} (|y|+|z|)^{|\beta|+|\gamma|}e^{-\frac{1}{2}\langle Cy, y \rangle} \leq C_{\varepsilon'}^{|\beta|+|\gamma|+1}(\beta!\gamma!)^{\theta}e^{\varepsilon' |z|^{1/\theta}}. 
\end{multline*}
Then, taking $\varepsilon > C_{\varepsilon'}C$, we obtain \eqref{eq:gsdstf}. \\ Let now $u \in \GS{\theta}(\RR).$ We observe that for every $\alpha \in \N^d$:
$$(Bz)^{\alpha}T_{\varphi}u(z) =(-1)^{|\alpha|} \int_\RR e^{i\langle Bz,y \rangle}D_y^{\alpha}
\left( e^{-\frac{1}{2}\langle Cy,y \rangle}u(y)\right) \, dy. $$
Then, since $\textrm{det}B \neq 0$ and $\theta \geq 1/2$, we have by Leibniz and Fa\`a di Bruno formulas
$$
|z^{\alpha}T_{\varphi}u(z)| \leq C_1^{|\alpha|+1}(\alpha!)^{\theta}e^{\Phi(z)} \int_\RR e^{-\frac{1}{2}\langle C(y-y(z)), y-y(z)\rangle }\, dy 
$$
 for every $\alpha \in \no^d.$ This gives \eqref{eq:gsstf}. \\ To prove the second part of the Proposition, we need Lemma \ref{L2cont}. Let $U(z)$ be an entire function satisfying \eqref{eq:gsstf} and let $u = T^{\ast}_{\varphi}U,$ where 
$T^{\ast}_{\varphi}$ is the $L^2$-adjoint of $T_{\varphi}.$ Then $T_{\varphi}u=U$. Moreover we have 
$$T^{\ast}_{\varphi}U(y)= c_{\varphi} \int_\CC e^{-i \overline{\varphi(y,z)}-2\Phi(z)}U(z)d\lambda (z) \in \GS{\theta}(\RR).$$
As a matter of fact, given $\alpha, \beta \in \N^d$ and arguing as in the proof of \eqref{eq:gsdstf}, we have:
\begin{multline*}
|y^{\beta}D_y^{\alpha}T^{\ast}_{\varphi}U(y)| = c_{\varphi}\left|y^{\beta} \int_\CC D_y^{\alpha} e^{-i \overline{\varphi(y,z)}}e^{-2\Phi(z)}U(z)d\lambda(z) \right| \\
\leq C^{|\alpha|} \sum_{\gamma \leq \alpha} \binom{\alpha}{\gamma}(\alpha-\gamma)!^{1/2} \int_\CC |y|^{|\beta|}|z|^{|\gamma|}\left|e^{-i\overline{\varphi(y,z)}}e^{-2\Phi(z)}U(z) \right| d\lambda(z) \\ 
\leq C_{\varepsilon}C^{|\alpha|} \sum_{\gamma \leq \alpha} \frac{\alpha!}{\gamma!(\alpha-\gamma)!^{1/2}} \int_\CC |y|^{|\beta|}|z|^{|\gamma|} e^{-\frac{1}{2}\langle C(y-y(z)), y-y(z)\rangle} e^{-\varepsilon |z|^{1/\theta}}d\lambda(z). 
\end{multline*}
Arguing as before we have that 
$$\sup_{y \in \RR}(|y|+|z|)^{|\beta|+|\gamma|} e^{-\frac{1}{2}\langle C(y-y(z)), y-y(z)\rangle} e^{-\varepsilon |z|^{1/\theta}} \leq C_{\varepsilon}C^{|\alpha|+|\gamma|}(\beta!\gamma!)^\theta e^{-\frac{\varepsilon}{2}|z|^{1/\theta}}$$ 
Then, we obtain that $u \in \GS{\theta}(\RR).$ Finally, if $U \in \mathcal{H}(\mathbb{C}^d)$ satisfies \eqref{eq:gsdstf}, then
$$(u,v)= \int_\CC U(z) \overline{T_{\varphi}v(z)}e^{-2\Phi(z)}d\lambda(z), \qquad v \in \GS{\theta}(\RR),$$
defines an ultradistribution in $
\GSd{\theta}(\RR).$ Moreover we have $T_{\varphi}u=U,$ cf. \cite{Hormander1}.
\end{proof}
\begin{rem}
In the case $\phi(z,y)=\langle z,y\rangle + \frac{i}{2} |y|^2$, when $u \in \GS{\theta}(\RR)$, the estimate \eqref{eq:gsstf} takes the form $$|T_{\phi}u(z)| \leq  C e^{-\varepsilon |z|^{1/\theta}+\frac{1}{2}|\Im z|^{2}}, \qquad z \in \mathbb{C}^{d},$$
for some positive constants $C, \varepsilon.$ 
\end{rem} 
We note that in the proof of Theorem \ref{thm:wfprop} we use the following identity, which can be seen as an inversion formula:
\begin{lem}
\label{lem:moyal}
Let $u\in\GSd{\theta}(\RR)$, $f\in\GS{\theta}(\RR)$. Then 
\begin{equation}
\label{eq:moyal}
\langle u,\,f\rangle=\langle T_\varphi u,\, e^{-2\Phi(\cdot)} T_{-\overline{\varphi}} f\rangle =\int_{\CC} T_\varphi u(z)  T_{-\overline{\varphi}} f(z) e^{-2\Phi(z)}d\lambda(z).
\end{equation} 
\end{lem}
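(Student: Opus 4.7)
The plan is to read the identity off directly from the construction of $u$ as a tempered ultradistribution carried out in the converse part of the proof of \thmref{thm:wfprop}, supplemented with a short conjugation computation that identifies $T_{-\overline\varphi}f$ with $\overline{T_\varphi\overline f}$, and a Fubini check based on the growth/decay estimates of the FBI transform.

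First I would recall from the proof of \thmref{thm:wfprop} that for $u\in\GSd{\theta}(\RR)$ and $U:=T_\varphi u$ the reconstruction
\[
(u,v)=\int_\CC U(z)\,\overline{T_\varphi v(z)}\,e^{-2\Phi(z)}\,d\lambda(z),\qquad v\in\GS{\theta}(\RR),
\]
holds in the sesquilinear (i.e.\ $L^2$) pairing sense. To obtain the bilinear duality pairing which appears on the left-hand side of \eqref{eq:moyal}, I would specialise this identity to $v=\overline f$, with $f\in\GS{\theta}(\RR)$, thereby writing
\[
\langle u,f\rangle=(u,\overline f)=\int_\CC T_\varphi u(z)\,\overline{T_\varphi\overline f(z)}\,e^{-2\Phi(z)}\,d\lambda(z).
\]

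The next step is to verify the pointwise identity $\overline{T_\varphi\overline f(z)}=T_{-\overline\varphi}f(z)$. Direct substitution into the definition of $T_\varphi$ gives $T_\varphi\overline f(z)=c_\varphi\int\overline{f(y)}\,e^{i\varphi(z,y)}\,dy$, whence $\overline{T_\varphi\overline f(z)}=\overline{c_\varphi}\int f(y)\,e^{-i\overline{\varphi(z,y)}}\,dy$. One then checks that $c_\varphi=2^{-d/2}\pi^{-3d/4}(\det\Im C)^{-1/4}|\det B|$ is real, and that the substitution $\varphi\mapsto -\overline\varphi$ sends $B\mapsto -\overline B$ and $C\mapsto -\overline C$, leaving both $|\det B|$ and $\Im C$ unchanged; hence $c_{-\overline\varphi}=c_\varphi$ and the last integral equals $T_{-\overline\varphi}f(z)$. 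This reduces the Lemma to the reconstruction identity already established in \thmref{thm:wfprop}.

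The only genuinely technical point, which I expect to be the main (albeit mild) obstacle, is the justification of Fubini in passing between orders of integration in the argument above. I would handle it using the estimates of \thmref{thm:wfprop}: by \eqref{eq:gsstf} there exists $\varepsilon_0>0$ with $|T_{-\overline\varphi}f(z)|\lesssim e^{-\varepsilon_0|z|^{1/\theta}+\Phi(z)}$, while by \eqref{eq:gsdstf}, for any $\varepsilon>0$, $|T_\varphi u(z)|\lesssim e^{\varepsilon|z|^{1/\theta}+\Phi(z)}$. Choosing $\varepsilon<\varepsilon_0$ lets the two factors $e^{\Phi(z)}$ combine with $e^{-2\Phi(z)}$ to cancel exactly, and the remaining factor $e^{(\varepsilon-\varepsilon_0)|z|^{1/\theta}}$ is integrable on $\CC$ with respect to $d\lambda$. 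Combined with the rapid decay of $f(y)e^{-i\overline{\varphi(z,y)}}$ in $y$, this yields absolute integrability of the double integral and so legitimises the swap, completing the proof.
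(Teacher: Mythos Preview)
Your approach is correct and is exactly what the paper intends: the paper gives no separate proof of this lemma but simply remarks that it is the inversion formula implicit in the reconstruction step of \thmref{thm:wfprop}, which is precisely the identity $(u,v)=\int_\CC T_\varphi u(z)\,\overline{T_\varphi v(z)}\,e^{-2\Phi(z)}\,d\lambda(z)$ you start from. Your conjugation computation and Fubini justification fill in the details the paper leaves to the reader; the only caveat is that $-\overline\varphi$ here denotes the function $(z,y)\mapsto -\overline{\varphi(z,y)}$ (anti-holomorphic in $z$) rather than a new holomorphic phase, but this does not affect the validity of your constant check $c_{-\overline\varphi}=c_\varphi$ or the identity $\overline{T_\varphi\overline f(z)}=T_{-\overline\varphi}f(z)$.
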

\subsection{Short-time Fourier transform}

The Bargmann or FBI transform is deeply connected to another transform, the \emph{short time Fourier transform} (short: \emph{STFT}). For a broad analysis of this connection, we refer to \cite{Grochenig1,Teofanov,Toft} in the setting of Gelfand-Shilov and modulation spaces. In the STFT, we allow more general window functions than Gaussians. Therefore the holomorphicity properties of the transform are less prominent.

\begin{defn}
Let $g\in\GS{\theta}(\RR)\setminus\{0\}$, the so-called \emph{window function}. Then for $u\in\GSd{\theta}(\RR)$ the short time Fourier transform $V_g(u)$, is defined as 
$$V_g(u)(z)=\underbrace{\frac{1}{(2\pi)^{d/2}\|g\|_2}}_{=:k_g}\langle u, \overline{g(\cdot-x)}e^{-i\langle \xi,\cdot\rangle}\rangle \quad \text{with }z=(x,\xi).$$
In particular, $V_\psi(u)$ denotes the transform with the standard Gaussian window $\psi(y):=\pi^{-d/4} e^{-\frac{y^2}{2}}.$ 
\end{defn}

\begin{lem}[Properties of the STFT]
\label{lem:stftproperties}
Let $h\in\GS{\theta}(\RR)$. Then (see \cite{Teofanov}) $V_h$ is a continuous mapping from $\GS{\theta}(\RR)$ to $\GS{\theta}(\rr{2d})$ and $\GSd{\theta}(\RR)$ to $\GSd{\theta}(\rr{2d})$ satisfying $V_h(u)\in\GS{\theta}(\rr{2d})\Rightarrow u\in\GS{\theta}(\RR)$.\\
For the standard Gaussian window $\psi$ and the standard phase $\phi$ we have the following identity:
\begin{equation}
\label{eq:stftrel}
e^{-\Phi(\chi^b_\phi(x+i\xi))}T_\phi(u)(\chi^b_\phi(x+i\xi))=e^{-\frac{|x|^2}{2}}T_\phi(u)(-\xi-ix)=V_\psi(u)(x,\xi).
\end{equation}
Furthermore we have the following estimate for changing to another window $g\in\GS{\theta}(\RR)$ (see \cite[Lemma 11.3.3]{Grochenig1}):
\begin{equation}
\label{eq:groch}
|V_gu |\lesssim \|h\|_2^{-1} \left(| V_hg |*| V_h u |\right).
\end{equation}
\end{lem}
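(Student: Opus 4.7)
The plan is to recognize that the statement bundles three assertions: the continuity and characterization properties of $V_h$, the explicit identity \eqref{eq:stftrel} relating $V_\psi u$ to $T_\phi u$, and the change-of-window estimate \eqref{eq:groch}. The first and third are established results (\cite{Teofanov} and \cite[Lemma 11.3.3]{Grochenig1} respectively), so I would only cite them; the substantive content is the central identity.

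To prove \eqref{eq:stftrel} I would compute both sides directly. Using the standard phase $\phi(z,y) = \langle z, y\rangle + \tfrac{i}{2}|y|^2$ of Remark \ref{rem:stdphase}, substituting $z = -\xi - ix$ gives
\begin{equation*}
i\phi(-\xi - ix, y) = -i\langle \xi, y\rangle + \langle x, y\rangle - \tfrac{1}{2}|y|^2 = -i\langle \xi, y\rangle - \tfrac{1}{2}|y - x|^2 + \tfrac{1}{2}|x|^2
\end{equation*}
after completing the square. Factoring the $|x|^2$-term out of the pairing yields
\begin{equation*}
e^{-|x|^2/2}\, T_\phi u(-\xi - ix) = c_\phi \,\langle u_y,\, e^{-i\langle \xi, y\rangle}\, e^{-|y - x|^2/2}\rangle.
\end{equation*}
A parallel unfolding of $V_\psi u(x,\xi)$, using that $\psi$ is real-valued with $\|\psi\|_2 = 1$ and $\psi(y-x) = \pi^{-d/4}e^{-|y-x|^2/2}$, produces the same pairing with prefactor $(2\pi)^{-d/2}\pi^{-d/4}$. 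Since $c_\phi = 2^{-d/2}\pi^{-3d/4} = (2\pi)^{-d/2}\pi^{-d/4}$, the two constants agree and the second equality follows.

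For the first equality, I would invoke the convention that $\chi^b_\phi \colon \CC \to \CC$ is the real-linear symplectic map $x + i\xi \mapsto -\xi - ix$ associated to the standard phase; then $\Phi(\chi^b_\phi(x+i\xi)) = \tfrac{1}{2}|x|^2$ by the formula $\Phi(z) = \tfrac{1}{2}|\Im z|^2$ of Remark \ref{rem:stdphase}, which matches $e^{-|x|^2/2}$ on the middle term.

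No step presents a real obstacle; the work is essentially bookkeeping of the normalization constants $c_\phi$, $k_\psi$ and $\|\psi\|_2$. The one minor subtlety is that the pairing $\langle u_y, e^{i\phi(z,\cdot)}\rangle$ must be meaningful for complex values of $z$, which is legitimate because $T_\phi u$ is entire on $\CC$ by Theorem \ref{thm:wfprop}, so its value at $z = -\xi - ix$ is unambiguous and coincides with the formula used above.
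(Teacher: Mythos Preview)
Your proposal is correct. The paper itself gives no proof for this lemma: parts one and three are handled by citation exactly as you do, and the identity \eqref{eq:stftrel} is simply stated, so your explicit computation (completing the square in $i\phi(-\xi-ix,y)$ and matching the constants $c_\phi=(2\pi)^{-d/2}\pi^{-d/4}=k_\psi\pi^{-d/4}$) fills in precisely the bookkeeping the authors left implicit.

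One small notational point: in the paper $\chi^b_\phi$ is defined as a map $T^*\RR\to\CC$ with $\chi^b_\phi(x,\xi)=-\xi-ix$, so the argument $x+i\xi$ in \eqref{eq:stftrel} is the paper's identification of $(x,\xi)$ with a complex number rather than a separate convention you need to introduce; otherwise your reading is exactly right.
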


\section{The global wave front set on Gelfand-Shilov spaces}\label{wavefront}

In this section we define the global wave front set for tempered ultradistributions from $\GSd{\theta}(\RR).$ We shall define it, for clearness sake, first with respect to a general FBI phase. First of all let us consider the linear canonical transformation associated to a phase function $\varphi$ and defined as follows:
$$\chi_\varphi : (y, -\varphi'_{y}(y,z)) \to (z, \varphi'_z(y,z)),$$ and let $\chi^b_{\varphi}= \chi_{\varphi} \circ \pi_1,$ where $\pi_1: T^{\ast}\mathbb{C}^d \to \mathbb{C}^d$ is the standard projection. It is known from \cite{SJ} that $\chi^b_{\varphi}$ is a bijection of $T^{\ast}\RR$ on $\mathbb{C}^d.$ 

\begin{defn}
Let $(x_0,\xi_0)\in\rr{2d}\setminus\{0\}=T^*\RR \setminus\{0\}$ and $u\in\GSd{\theta}(\RR)$. We say that $(x_0,\xi_0) \notin\WFt(u)$ if there exists an open conic neighbourhood $\Gamma$ of $\chi^b_{\varphi}(x_0,\xi_0)$ and $\varepsilon>0$ such that 
\begin{equation}
|T_\varphi u(z)|\lesssim \ e^{-\varepsilon |z|^{1/\theta}+\Phi(z)}
\label{eq:decayeq}
\end{equation}
 holds for every $z \in \Gamma$.
\end{defn}
As a consequence of \thmref{thm:wfprop} we get the following result.
\begin{cor}\label{cor:wfempty}
Let $u\in\GSd{\theta} (\RR)$. Then $\WFt(u)=\emptyset$ iff $u\in\GS{\theta} (\RR)$.
\end{cor}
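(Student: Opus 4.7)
The plan is to prove the two implications separately, using Theorem \ref{thm:wfprop} together with a covering argument on the sphere.

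For the implication $u\in\GS{\theta}(\RR)\Rightarrow \WFt(u)=\emptyset$, I would simply invoke the direct half of \thmref{thm:wfprop}: estimate \eqref{eq:gsstf} with $\alpha=0$ gives constants $C,\varepsilon>0$ such that
$$|T_\varphi u(z)|\leq C\, e^{-\varepsilon|z|^{1/\theta}+\Phi(z)}\quad\text{for all }z\in\CC,$$
so the defining estimate \eqref{eq:decayeq} is satisfied on the full cone $\CC\setminus\{0\}$. Hence no point can belong to $\WFt(u)$.

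For the converse, assume $\WFt(u)=\emptyset$. The first step is to turn the pointwise definition into a uniform estimate: since $\chi^b_\varphi\colon T^*\RR\to\CC$ is a linear bijection, varying $(x_0,\xi_0)$ over $\rr{2d}\setminus\{0\}$ makes the base points $z_0=\chi^b_\varphi(x_0,\xi_0)$ range over all of $\CC\setminus\{0\}$. By hypothesis, to each such $z_0$ we can attach an open cone $\Gamma_{z_0}\ni z_0$ and some $\varepsilon_{z_0}>0$ on which \eqref{eq:decayeq} holds. The traces of these cones on the unit sphere $S^{2d-1}\subset\CC$ form an open cover; by compactness I would extract a finite subcover $\Gamma_1,\dots,\Gamma_N$ with associated constants $\varepsilon_1,\dots,\varepsilon_N$ and implicit constants $C_1,\dots,C_N$. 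Setting $\varepsilon:=\min_j\varepsilon_j$ and $C:=\max_j C_j$, I obtain
$$|T_\varphi u(z)|\leq C\, e^{-\varepsilon|z|^{1/\theta}+\Phi(z)}\quad\text{for every }z\in\CC\setminus\{0\}.$$

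The last step is to promote this bound to a genuine global estimate of the form \eqref{eq:gsstf} for $\alpha=0$. Away from the origin the previous estimate already does the job; on a bounded neighborhood of $z=0$ I would apply \eqref{eq:gsdstf} (valid on all of $\CC$ since $u\in\GSd{\theta}(\RR)$) with, say, $\varepsilon'=1$, noting that on a compact set the factor $e^{-\varepsilon|z|^{1/\theta}}$ is bounded from below so $e^{\Phi(z)}$ and $e^{-\varepsilon|z|^{1/\theta}+\Phi(z)}$ are comparable. Combining the two bounds yields the global estimate \eqref{eq:gsstf} with $\alpha=0$ (the corresponding estimates for $\alpha\neq 0$ then follow from the Cauchy-type \lemref{lem:cauchyineq}), and the converse half of \thmref{thm:wfprop} concludes that $u\in\GS{\theta}(\RR)$.

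The main obstacle I anticipate is not any single computation but the bookkeeping: checking that the constants $\varepsilon_j,C_j$ obtained from a conic cover can indeed be merged into uniform constants, and handling the complement (a bounded neighbourhood of the origin) cleanly so that the two regimes patch together into an estimate of exactly the form required by the converse direction of \thmref{thm:wfprop}.
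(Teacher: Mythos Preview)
Your proposal is correct and follows essentially the same route as the paper: the forward implication comes directly from \eqref{eq:gsstf}, and the converse is obtained by covering the unit sphere in $\CC$ with finitely many cones via compactness, merging the constants, and observing that the bound near the origin is trivial, before invoking the converse part of \thmref{thm:wfprop}. Your extra remark that \lemref{lem:cauchyineq} upgrades the $\alpha=0$ estimate to all $\alpha$ is fine but not strictly needed, since the proof of \thmref{thm:wfprop} already reduces to $\alpha=0$.
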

\begin{proof}
If $u \in \GS{\theta} (\RR)$, then $\WFt(u)=\emptyset$ since \eqref{eq:decayeq} holds on all $\mathbb{C}^d.$ Viceversa, if $\WFt(u)=\emptyset$, by the invertibility of $\chi_{\varphi}^b,$ we have that for every $ z \in \mathbb{C}^d$ with $|z| =1$ there exists a conic neighbourhood of $z$ where the estimate in \eqref{eq:gsstf} holds. Since the set $\{z \in \mathbb{C}^d : |z|=1\}$ is compact, then we can find $\varepsilon >0, C>0$ such that
\begin{equation} \label{eq:Test}
|T_{\varphi}u(z)| \leq C e^{-\varepsilon|z|^{1/\theta}+\Phi(z)}
\end{equation}
for every $z \in \mathbb{C}^d$ with $|z| \geq 1.$ For $|z| <1$ the estimate is obviously satisfied. Then \eqref{eq:Test} holds on $\mathbb{C}^d$ and this yields $u \in \GS{\theta} (\RR)$ by \thmref{thm:wfprop}.
\end{proof}
Arguing as in \cite{Hormander1} it is easy to verify that the set $\WFt(u)$ is independent of the choice of $\varphi$. In particular for $\phi(y,z)= \langle z,y\rangle +\frac{i}{2}|y|^2$ we have $\chi^b_{\phi}(x,\xi)= -\xi-i x$, which gives that a point $(x_0,\xi_0) \notin \WFt (u)$ if and only if there exist a conic neighbourhood $\Gamma'$ of $(x_0,\xi_0)$ and $\varepsilon >0$ such that 
\begin{equation} \label{defeq}
\left| \langle u_y,  e^{-i\langle y,\xi \rangle -\frac{1}{2}|y-x|^2} \rangle \right| \leq C e^{-\varepsilon (|x|^{1/\theta}+|\xi|^{1/\theta})} \, \qquad (x,\xi) \in \Gamma'.
\end{equation}
Equivalently, using \eqref{eq:stftrel}, this can be written in terms of the STFT:
\begin{equation} \label{eq:stftdefeq}
\left| V_\psi(u)(x,\xi) \right| \leq C e^{-\varepsilon (|x|^{1/\theta}+|\xi|^{1/\theta})} \, \qquad (x,\xi) \in \Gamma'.
\end{equation}
\begin{lem}\label{lem:convest}
Let $z_0\in\rr{2d}\setminus\{0\}$, $\Gamma$ be an open conic neighbourhood of $z_0$.
Let $U\in\mathcal{C}^{\infty}(\rr{2d})$ satisfy the following conditions
\begin{align*}
\exists \varepsilon,\ C>0\text{ s.t. }\left| U(x,\xi) \right| &\leq C e^{-\varepsilon (|x|^{1/\theta}+|\xi|^{1/\theta})} \, \qquad (x,\xi) \in \Gamma,\\
\forall \varepsilon,\exists C_\varepsilon>0\text{ s.t. }\left| U(x,\xi) \right| &\leq C_\varepsilon e^{\varepsilon (|x|^{1/\theta}+|\xi|^{1/\theta})} \, \qquad (x,\xi) \in \Gamma^c,
\end{align*}
Let $G\in\GS{\theta}(\rr{2d})$. Then for every open conic neighbourhood $\Gamma^\prime$ of $z_0$ such that $\overline{\Gamma^\prime}\subset\Gamma$ we have
\begin{equation}
\exists \varepsilon,\ C>0\text{ s.t. }\left| (G*U) (x,\xi)\right| \leq C e^{-\varepsilon (|x|^{1/\theta}+|\xi|^{1/\theta})} \, \qquad (x,\xi) \in \Gamma^\prime.
\label{eq:wfstftdef}
\end{equation}
\end{lem}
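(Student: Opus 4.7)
The plan is to split the convolution into contributions from $\Gamma$ and $\Gamma^c$:
\[
(G*U)(z) = I_1(z) + I_2(z), \qquad I_j(z) = \int_{\Omega_j} G(z-w)U(w)\,dw,
\]
with $\Omega_1=\Gamma$, $\Omega_2=\Gamma^c$. Two ingredients underlie the argument. First, a geometric separation: since $\overline{\Gamma'}\cap S^{2d-1}$ is a compact subset of the open set $\Gamma\cap S^{2d-1}$, a compactness argument yields a constant $\delta\in(0,1)$ with $|z-w|\geq\delta|z|$ whenever $z\in\Gamma'$ and $w\in\Gamma^c$. Second, the Peetre-type inequality
\[
a^p+b^p \leq (a+b)^p \leq 2^{p-1}(a^p+b^p), \qquad a,b\geq 0,\ p=1/\theta \geq 1,
\]
which renders $|z|^{1/\theta}$ equivalent (up to constants) to $|x|^{1/\theta}+|\xi|^{1/\theta}$ for $z=(x,\xi)$ and lets me distribute weights across sums. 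From $G\in\GS{\theta}(\rr{2d})$ I extract $|G(v)|\lesssim e^{-c|v|^{1/\theta}}$ for some $c>0$.

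For $I_1$, combining the $G$-decay with the hypothesis $|U(w)|\lesssim e^{-\varepsilon|w|^{1/\theta}}$ on $\Gamma$, splitting the $G$-exponent in half, and applying $|z|\leq|z-w|+|w|$ through Peetre's inequality produces
\[
|G(z-w)U(w)|\lesssim e^{-(c/2)|z-w|^{1/\theta}}\cdot e^{-\eta|z|^{1/\theta}}
\]
for some $\eta>0$; the first factor is integrable in $w$, so $|I_1(z)|\lesssim e^{-\eta|z|^{1/\theta}}$.

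The heart of the argument is $I_2$, where one must simultaneously beat the subexponential growth $e^{\varepsilon|w|^{1/\theta}}$ of $U$ on $\Gamma^c$ and extract decay in $|z|$ from $G$: $|z-w|\geq\delta|z|$ by itself produces no $|w|$-decay, while $|z-w|\geq|w|-|z|$ by itself produces no $|z|$-decay. I would interpolate. For $w\in\Gamma^c$ with $|w|\geq|z|$, Peetre applied to $|w|\leq|z|+|z-w|$ gives $|z-w|^{1/\theta}\geq 2^{1-1/\theta}|w|^{1/\theta}-|z|^{1/\theta}$, and taking a convex combination with $|z-w|^{1/\theta}\geq\delta^{1/\theta}|z|^{1/\theta}$ using weight $\lambda\in\bigl(0,\delta^{1/\theta}/(1+\delta^{1/\theta})\bigr)$ yields positive constants $\mu_1,\mu_2$ with
\[
|z-w|^{1/\theta}\geq \mu_1|z|^{1/\theta}+\mu_2|w|^{1/\theta}.
\]
Choosing $\varepsilon<c\mu_2$ in the hypothesis on $U$ makes the remaining integrand integrable and leaves the factor $e^{-c\mu_1|z|^{1/\theta}}$; for $|w|<|z|$ the cone bound $|z-w|\geq\delta|z|$ combined with $e^{\varepsilon|w|^{1/\theta}}\leq e^{\varepsilon|z|^{1/\theta}}$ already suffices. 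Translating $|z|^{1/\theta}$ back to $|x|^{1/\theta}+|\xi|^{1/\theta}$ via Peetre then delivers \eqref{eq:wfstftdef}. The principal obstacle is exactly this simultaneous control of $w$-growth and $z$-decay on the bad region; the interpolation trick, which requires $1/\theta\geq 1$, is what makes it work.
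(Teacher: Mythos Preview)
Your argument is correct and follows the same splitting strategy the paper sketches here and carries out in detail in the proof of Proposition~\ref{prop:density}: decompose the convolution integral over $\Gamma$ and $\Gamma^c$ and estimate each piece separately. The one difference worth noting is that the paper invokes the stronger scaling inequality $|z-w|\gtrsim |z|+|w|$ for $z\in\Gamma'$ and $w\in\Gamma^c$ (valid because $\overline{\Gamma'}$ and $\Gamma^c$ are disjoint closed cones away from the origin), which immediately produces both $|z|$-decay and $|w|$-integrability from the $G$-factor on the bad region; with that estimate in hand your interpolation trick and the further subdivision into $|w|\gtrless|z|$ become unnecessary, though your route is equally valid.
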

\begin{proof}
The proof is done by careful splitting of the convolution integral with respect to $\Gamma$. As we already carry out a similar argument in detail in the later proof of Proposition \ref{prop:density}, we omit it here.
\end{proof}
The following Lemma states what happens if the Gaussian window function used in the transform is replaced by another general element in $\GS{1/2}(\RR)$.
\begin{lem}
\label{lemma9.1}
Let $u \in \GSd{\theta}(\RR)$ and let $\Gamma$ be a closed cone in $T^*\RR$ such that $\Gamma \cap \WFt (u)=\emptyset.$
Then for every $g \in \mathcal{S}_{1/2}(\RR)$ there exist positive constants $M,m$ such that 
$$|V_g(u)(x,\xi)| \leq M e^{-m(|x|^{\frac{1}{\theta}}+|\xi|^{\frac{1}{\theta}})}, \quad (x,\xi) \in \Gamma. $$
Conversely, in the previous characterization \eqref{eq:stftdefeq} of the wave front set we can replace $\psi$ by any non-zero window function $g \in \GS{1/2}(\RR)\setminus\{0\}$. 
\end{lem}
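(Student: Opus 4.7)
The plan is to deduce both implications from Gr\"ochenig's change of window inequality \eqref{eq:groch}, combined with \lemref{lem:convest}. In each direction one picks an auxiliary window so that the cross STFT of the two windows lies in $\GS{\theta}(\rr{2d})$, and then splits the convolution according to the cone of decay.

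For the first direction, I would apply \eqref{eq:groch} with $h=\psi$ to obtain
$$|V_g u(x,\xi)|\lesssim \bigl(|V_\psi g|*|V_\psi u|\bigr)(x,\xi).$$
Since $g\in\GS{1/2}(\RR)\subset\GS{\theta}(\RR)$, \lemref{lem:stftproperties} gives $V_\psi g\in\GS{\theta}(\rr{2d})$, so it can play the role of $G$ in \lemref{lem:convest}. The function $U:=|V_\psi u|$ has subexponential growth $|V_\psi u(x,\xi)|\lesssim C_\varepsilon e^{\varepsilon(|x|^{1/\theta}+|\xi|^{1/\theta})}$ on all of $\rr{2d}$, which follows from the estimate \eqref{eq:gsdstf} in \thmref{thm:wfprop} via the identity \eqref{eq:stftrel}. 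By hypothesis together with \eqref{eq:stftdefeq}, for each direction $z_0\in\Gamma\setminus\{0\}$ there is an open conic neighbourhood of $z_0$ on which $V_\psi u$ decays exponentially at rate $|z|^{1/\theta}$, so \lemref{lem:convest} yields exponential decay of the convolution, and hence of $V_g u$, on a slightly smaller open cone around $z_0$. Finally a compactness argument on $\Gamma\cap S^{2d-1}$ extracts a finite subcover, yielding uniform decay on the whole closed cone $\Gamma$.

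The converse is obtained by interchanging the roles of $\psi$ and $g$: apply \eqref{eq:groch} with $h=g$ to get $|V_\psi u|\lesssim \|g\|_2^{-1}(|V_g\psi|*|V_g u|)$. Since $g,\psi\in\GS{1/2}(\RR)\subset\GS{\theta}(\RR)$, \lemref{lem:stftproperties} gives $V_g\psi\in\GS{\theta}(\rr{2d})$. It remains to verify that $V_g u$ itself has subexponential growth of the same type everywhere; this follows by testing the continuity of $u\in\GSd{\theta}(\RR)$ against the family $y\mapsto g(y-x)e^{-i\langle y,\xi\rangle}$ and estimating exactly as in the proof of \eqref{eq:gsdstf}: the translation yields factors controlled by $e^{\varepsilon|x|^{1/\theta}}$ (after a change of variable and using $g\in\GS{1/2}$ to absorb $y^\beta$-powers), while Leibniz differentiation moves the $\xi$-powers into $\alpha!^\theta$-controlled combinatorial sums. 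Combining this with the assumed exponential decay of $V_g u$ on a conic neighbourhood of $(x_0,\xi_0)$, a further application of \lemref{lem:convest} gives exponential decay of $V_\psi u$ on a slightly smaller conic neighbourhood, hence $(x_0,\xi_0)\notin\WFt(u)$ via \eqref{eq:stftdefeq}.

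The main technical obstacle is the bookkeeping of cone shrinkage: each invocation of \lemref{lem:convest} only delivers decay on a strictly smaller open cone than the one on which the hypothesis was available. In the forward direction this is absorbed by the compactness covering argument for the directions of the closed cone $\Gamma$, while in the converse direction it is harmless because \eqref{eq:stftdefeq} only requires decay on \emph{some} conic neighbourhood of the base point. A secondary, purely routine, obstacle is the verification of the subexponential growth of $V_g u$ for $g\neq\psi$; this mirrors the argument already performed in the proof of \thmref{thm:wfprop} and uses crucially the quasianalytic condition $g\in\GS{1/2}$ to control the $y^\beta$-powers arising from the duality pairing.
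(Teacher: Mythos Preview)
Your proposal is correct and follows essentially the same route as the paper: the paper's proof consists of the single line ``apply \eqref{eq:groch} with $h=\psi$ and then \lemref{lem:convest}'', and you have fleshed out precisely this argument, including the compactness covering for the closed cone $\Gamma$ and the symmetric application (with $h=g$) for the converse, which the paper leaves implicit. One minor simplification: for the subexponential growth of $V_g u$ needed in the converse, rather than redoing the duality estimate from the proof of \thmref{thm:wfprop}, you can simply reuse \eqref{eq:groch} with $h=\psi$ once more to bound $|V_g u|\lesssim |V_\psi g|*|V_\psi u|$ and inherit the global subexponential growth directly from that of $V_\psi u$.
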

\begin{proof}
By \eqref{eq:groch} we write
$$|V_gu |\lesssim | V_\psi g |*|V_\psi u |.$$
Applying Lemma \ref{lem:convest} yields the assertion.
\end{proof}
The following Proposition asserts that for each possible global wave front set, there exists a distribution with such singularities. The construction used is similar to the original one for the classical wave front set in \cite{Hormander0} and has first been used in a similar statement for the corner component of the $\mathcal{S}$-wave front set in \cite{Coriasco1}. For the Gabor wave front set, i.e. in the tempered setting, it has been carried out in \cite{SW}. Here we adapt it to the Gelfand-Shilov context.
\begin{prop}
Let $\Gamma\subset \rr{2d}\setminus\{0\}$ closed, conic. Then there exists $u\in\GSd{\theta}(\RR)$ such that $\WFt(u)=\Gamma$.
\end{prop}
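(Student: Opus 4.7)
The plan is to follow the template of \cite{Hormander0,Coriasco1,SW} and realise $u$ as a superposition of Gaussian coherent states attached to a discrete sequence of phase space points whose directions cluster exactly on $\Gamma$, exploiting the STFT characterisation \eqref{eq:stftdefeq} of $\WFt$ and the explicit form of $V_\psi\psi_{z_j}$. We may assume $\Gamma\neq\emptyset$, otherwise $u=0$ works by \lemref{cor:wfempty}.

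First I fix a countable dense sequence $\{\eta_j\}_{j\in\N}\subset\Gamma\cap S$, where $S:=\{w\in\rr{2d}:|w|=1\}$, a rapidly increasing sequence of scales $\lambda_j$ with $\lambda_{j+1}\geq 10\lambda_j$ and $\lambda_1\geq 1$, and weights $c_j:=e^{-\lambda_j^{1/\theta}/j}$. Setting $z_j:=\lambda_j\eta_j=(x_j,\xi_j)$, the candidate is
\[
u:=\sum_{j=1}^\infty c_j\,\psi_{z_j},\qquad \psi_{z_j}(y):=e^{i\langle y,\xi_j\rangle}\psi(y-x_j),
\]
with $\psi(y)=\pi^{-d/4}e^{-|y|^2/2}$ the standard Gaussian. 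Convergence in $\GSd{\theta}(\RR)$ follows from \lemref{lem:stftproperties}: for any $f\in\GS{\theta}(\RR)$ we have $V_\psi f\in\GS{\theta}(\rr{2d})$, hence $|\langle\psi_{z_j},f\rangle|\lesssim|V_\psi f(x_j,-\xi_j)|\lesssim e^{-\varepsilon_f\lambda_j^{1/\theta}}$, which dominates $c_j$ uniformly on bounded subsets of $\GS{\theta}(\RR)$. Moreover, by the covariance of the STFT under time-frequency shifts and the well-known identity for $V_\psi\psi$,
\[
|V_\psi\psi_{z_j}(w)|=C_0\,e^{-|w-z_j|^2/4},
\]
so that $|V_\psi u(w)|\leq C_0\sum_j c_j\,e^{-|w-z_j|^2/4}$.

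For the inclusion $\WFt(u)\subseteq\Gamma$, take $(x_0,\xi_0)\notin\Gamma\cup\{0\}$. Closedness and conicity of $\Gamma$ yield an open conic neighbourhood $\Gamma'$ of $(x_0,\xi_0)$ and $\delta>0$ with $\langle w/|w|,\eta_j\rangle\leq 1-\delta$ for all $w\in\Gamma'$ and all $j$. Expanding the inner product, this gives $|w-z_j|^2\geq\delta(|w|^2+\lambda_j^2)$ on $\Gamma'$, whence
\[
|V_\psi u(w)|\leq C_0\,e^{-\delta|w|^2/4}\sum_j c_j\,e^{-\delta\lambda_j^2/4}\lesssim e^{-\delta|w|^2/4},\qquad w\in\Gamma',
\]
a Gaussian decay much stronger than \eqref{eq:stftdefeq} requires, so $(x_0,\xi_0)\notin\WFt(u)$. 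For the reverse inclusion $\Gamma\subseteq\WFt(u)$, fix $\eta\in\Gamma\cap S$ and a subsequence $\eta_{j_k}\to\eta$. The gap $\lambda_{j+1}\geq 10\lambda_j$ gives $|z_j-z_{j_k}|\geq\tfrac{9}{10}\lambda_{j_k}$ for every $j\neq j_k$, so the off-diagonal contribution to $V_\psi u(z_{j_k})$ is bounded by $C_0\,e^{-\lambda_{j_k}^2/8}\sum_j c_j$, whereas the diagonal term is exactly $c_{j_k}C_0=C_0\,e^{-|z_{j_k}|^{1/\theta}/j_k}$. Since $1/\theta\leq 2$, the diagonal dominates for $k$ large, and
\[
|V_\psi u(z_{j_k})|\geq\tfrac{C_0}{2}\,e^{-|z_{j_k}|^{1/\theta}/j_k}.
\]
As $1/j_k\to 0$ and $z_{j_k}/|z_{j_k}|=\eta_{j_k}\to\eta$ with $|z_{j_k}|\to\infty$, this precludes \eqref{eq:stftdefeq} in every conic neighbourhood of $\eta$ for every fixed $\varepsilon>0$, so $\eta\in\WFt(u)$.

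The principal obstacle is the delicate simultaneous calibration of the three parameters: the weights $c_j$ must decay more slowly than every $e^{-\varepsilon\lambda_j^{1/\theta}}$, so as to realise each $\eta_j$ in the wave front set, yet rapidly enough for absolute convergence in $\GSd{\theta}(\RR)$, while the scale gap $\lambda_{j+1}/\lambda_j$ has to be large enough that the Gaussian envelope of the coherent states separates the wave packets and kills the cross-terms in $V_\psi u$. The hypothesis $\theta\geq 1/2$, equivalent to $1/\theta\leq 2$, enters precisely here: it ensures that the quadratic exponent produced by the Gaussian envelope dominates the Gelfand-Shilov exponent $|z|^{1/\theta}$ at the diagonal.
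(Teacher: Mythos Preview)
Your proof is correct and follows the same broad strategy as the paper --- superimposing Gaussian coherent states located at a discrete set of phase-space points whose directions accumulate precisely on $\Gamma\cap S$, then reading off both inclusions from the explicit Gaussian shape of $V_\psi\psi_{z_j}$. The implementation differs in one meaningful respect, however. The paper attaches to \emph{each} direction $(y_j,\eta_j)$ an entire sequence of scales $k^2$, $k\in\N$, and uses the simple weights $2^{-j}$; this produces a transform that is bounded below by a \emph{constant} along each ray $\{k^2\chi^b_\phi(y_j,\eta_j)\}$, so that the lower bound argument reduces to checking $|U|>1/2$ at those points. You instead assign a single scale $\lambda_j$ to each index and compensate with the calibrated weights $c_j=e^{-\lambda_j^{1/\theta}/j}$, obtaining a lower bound $\tfrac{C_0}{2}e^{-|z_{j_k}|^{1/\theta}/j_k}$ that is not constant but decays more slowly than every $e^{-\varepsilon|z|^{1/\theta}}$. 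Both work; the paper's version keeps the bookkeeping lighter, while yours is closer in spirit to H\"ormander's original $C^\infty$ construction.

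One small point to tighten: your argument for $\Gamma\subseteq\WFt(u)$ uses $1/j_k\to 0$, which requires that every $\eta\in\Gamma\cap S$ be the limit of a subsequence $\eta_{j_k}$ with $j_k\to\infty$. Mere density of $\{\eta_j\}$ does not guarantee this at isolated points of $\Gamma\cap S$ (e.g.\ if $\Gamma\cap S$ is finite and some direction appears only once in the list). The fix is trivial --- arrange the enumeration so that each $\eta_j$ recurs infinitely often --- but it should be stated.
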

\begin{proof}
Let $(y,\eta) \in \rr{2d} \setminus \{ 0 \}$ and $k \in \no$ be fixed. We define for $x \in \RR$
\begin{equation}\label{eq:fk}
f_k(x;y,\eta) := \exp\left(-\frac{1}{2}|x+k^2 y|^2 + i k^2 \langle x , \eta \rangle \right) \in \mathcal{S}_{1/2}(\RR).
\end{equation}
We may calculate the modulus of its transform:
\begin{equation}
\label{eq:abseq}
\left|T_\phi\left(f_k(\cdot;y,\eta)\right)(x+i\xi)\right|=\frac{c_\phi}{\pi^{d/2}}\exp\left(-\frac{1}{4}|\xi+k^2y|^2-\frac{1}{4}|x+k^2\eta|^2+\frac{|\xi|^2}{2}\right)
\end{equation}
As the $T_\phi\left(f_k(\cdot;y,\eta)\right)$ are analytic functions, we can thus conclude that
$$U(x+i\xi;y,\eta):=\sum_{k=1}^{\infty} T_\phi\left(f_k(\cdot;y,\eta)\right)$$
is an analytic function satisfying \eqref{eq:gsdstf} everywhere, as the sum is bounded on each compactum.\\
Now take a sequence $(y_j,\eta_j)$, dense in $\Gamma\cap S^{d-1}$ and without repetitions\footnote{In case of a finite $\Gamma$ take the finite set of points $\{(y_j,\eta_j)\}=\Gamma\cap S^{d-1}$.} and define $$U(x+i\xi):=\sum_{j=1}^{\infty} 2^{-j} U(x+i\xi;y_j,\eta_j),$$ which again is an analytic function satisfying the estimate in \eqref{eq:gsdstf}.
We thus define $u$ as the unique ultradistribution in $\GSd{\theta}(\RR)$ such that $T_\phi u\equiv U$.\\
If $z_0\notin\Gamma$, we have by a standard scaling inequality for disjoint cones in an open conic neighbourhood $L$ of $\chi^b_\phi(z_0)$: 
$$\forall\ z\in L,\ w\in\Gamma:\quad |\chi^b_\phi(z)-w|\gtrsim |\chi^b_\phi(z)|+|w|$$
Recall that for the standard phase we have $\chi^b_\phi(x+i\xi)=-\xi-ix$. We may thus conclude by \eqref{eq:abseq} that on $L$ we have
\begin{align*}
|U(z)|&\lesssim \sum_{j=1}^\infty\sum_{k=1}^\infty 2^{-j} \exp\left(-\frac{1}{4}|\chi^b_\phi(z)-k^2(y,\eta)|^2+\Phi(z)\right)\\
&\lesssim \sum_{k=1}^\infty \exp\left(-c(k^4+|z|^2)+\Phi(z)\right),
\end{align*}
which proves $z_0\notin\WF(u)$ and thus $\WF(u)\subset\Gamma$.\\
To prove the opposite inclusion, $\Gamma\subset\WF(u)$, consider a fixed $(y_j,\eta_j)$. For that we note that for $m\neq j$ 
\begin{align}
\left|\left(e^{-\Phi}U\right)(k^2\chi^b_\phi(y_j,\eta_j);y_j,\eta_j)\right|&\stackrel{k\rightarrow\infty}{\rightarrow} \frac{c_\phi}{\pi^{d/2}},\\
\left|\left(e^{-\Phi}U\right)(k^2\chi^b_\phi(y_j,\eta_j);y_m,\eta_m)\right|&\stackrel{k\rightarrow\infty}{\rightarrow} 0.
\end{align}
With these identities it is easy to prove that for suitably large $k$ we have $|U(k^2\chi^b_\phi(y_j,\eta_j))|>1/2,$ cf. \cite{SW}, meaning $(y_j,\eta_j)\in\WFt(u)$. As the $(y_j,\eta_j)$ are dense in $\Gamma$, and $\WFt(u)$ is a closed set, this concludes the proof.
\end{proof}

\subsection{Transformation properties}

In this section we list the behaviour of $\WFt$ under unitary transformations associated with linear symplectomorphisms of $T^\ast\RR$, cf. \cite[Proposition 6.7]{Hormander1}.

\begin{prop}
\label{prop:symptrans}
Let $u \in \GSd{\theta}(\RR)$ and let $(x_0,\xi_0) \in \rr{2d}\setminus\{0\}$. Then the following properties hold: \\
i) Let $\rr{d}=\rr{d_v+d_w}$, 
$$(v_0,w_0,\zeta_0,\eta_0) \in \WFt(u) \Leftrightarrow (\zeta_0,w_0,-v_0,\eta_0) \in \WFt (\mathcal{F}_{v\rightarrow \zeta}{u}). $$
ii) Let $A$ be a real, symmetric $d\times d$-matrix. Then 
$$(x_0,\xi_0) \in \WFt (u) \Leftrightarrow (x_0,\xi_0+Ax_0) \in \WFt \left(e^{i x ^t\hskip-2pt Ax}u\right). $$
iii) Given a linear invertible map $A$ on $\RR$ and denoted by $^t \hskip-3pt A$ its transpose, we have
$$(x_0,\xi_0) \in \WFt (u) \Leftrightarrow (A^{-1}x_0, ^t \hskip-3pt A \xi_0) \in \WFt (A^{\ast}u), $$
where $A^{\ast}u(y)=\sqrt{|\textrm{det}A|}u(Ay).$\\
iv)\ $(x_0,\xi_0) \in \WFt (u) \Leftrightarrow (x_0,-\xi_0) \in \WFt (\bar{u}). $ \\
\end{prop}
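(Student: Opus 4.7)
The plan is to characterize $\WFt(u)$ through the short-time Fourier transform via \eqref{eq:stftdefeq} together with the window-independence provided by Lemma \ref{lemma9.1}. For each operation $u\mapsto Mu$ in (i)--(iv), I will derive a pointwise identity of the form $|V_\psi(Mu)(z)| = c\,|V_{g}(u)(\chi(z))|$, where $g\in\GS{1/2}(\RR)\setminus\{0\}$ and $\chi$ is a real-linear invertible map on $T^*\RR$ coinciding with the stated symplectic transformation. Since $\chi$ is invertible it sends open cones to open cones, and the exponential decay $e^{-\varepsilon(|x|^{1/\theta}+|\xi|^{1/\theta})}$ defining $\WFt^c$ transports accordingly; Lemma \ref{lemma9.1} then allows replacing $V_g$ by $V_\psi$ on the right-hand side to yield the claimed equivalence.

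For (iii), the substitution $z=Ay$ in the defining integral of $V_\psi(A^*u)(x,\xi)$ gives $|V_\psi(A^*u)(x,\xi)|=c\,|V_{\psi\circ A^{-1}}(u)(Ax,(A^{-1})^t\xi)|$, and the new window $\psi\circ A^{-1}$ lies in $\GS{1/2}(\RR)\setminus\{0\}$. For (iv), using that $\psi$ is real and that $\langle\bar u,f\rangle=\overline{\langle u,\bar f\rangle}$, one obtains directly $V_\psi(\bar u)(x,\xi)=\overline{V_\psi(u)(x,-\xi)}$, so no change of window is even needed. For (ii), the substitution $y=z+x$ together with the expansion $(z+x)^t A(z+x)=z^t Az+2z^t Ax+x^t Ax$ leads to $|V_\psi(e^{i\langle Ay,y\rangle}u)(x,\xi)|=|V_g(u)(x,\xi-2Ax)|$, where $g(z)=\psi(z)e^{-iz^t Az}\in\GS{1/2}(\RR)\setminus\{0\}$ (the factor $2$ reflecting the differentiation convention for the quadratic form). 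For (i), since the Gaussian window factors as $\psi(y_v,y_w)=\psi_{d_v}(y_v)\psi_{d_w}(y_w)$ and is invariant under $\mathcal{F}$, transferring the partial Fourier transform onto the window via Plancherel yields, up to a unimodular factor, $|V_\psi(\mathcal{F}_{v\to\zeta}u)(x_v,x_w,\xi_v,\xi_w)|=|V_\psi(u)(-\xi_v,x_w,x_v,\xi_w)|$, which is precisely the map claimed.

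The most delicate case is (ii), where the translation $y\mapsto z+x$ would naively produce a window depending on $x$. The crucial observation is that after expanding the quadratic form, the $x$-dependent pieces decompose into a unimodular scalar $e^{ix^t Ax-i\langle\xi,x\rangle}$ which drops out upon taking moduli, and an additional phase $e^{2iz^t Ax}$ in the integrand which is absorbed into the shift $\xi\mapsto\xi-2Ax$ of the frequency variable; the residual window $\psi(z)e^{-iz^t Az}$ is then $x$-independent and belongs to $\GS{1/2}(\RR)\setminus\{0\}$, allowing Lemma \ref{lemma9.1} to apply. Once this identification is made, the remainder in all four cases reduces to verifying that the stated linear maps are invertible, hence cone-preserving, and to matching signs and normalizations against the conventions in Definition \ref{definitionphase} and Remark \ref{rem:stdphase}; this is the routine part of the argument.
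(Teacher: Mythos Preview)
Your approach is correct. The paper itself does not prove this proposition; it merely cites \cite[Proposition 6.7]{Hormander1}. H\"ormander's original argument works on the FBI side: for each operation $u\mapsto Mu$ one computes $T_\varphi(Mu)$ in terms of $T_{\varphi'}(u)$ for a suitably modified quadratic phase $\varphi'$, and then invokes the independence of $\WFt$ from the choice of $\varphi$ (the fact stated in the paper just before \eqref{defeq}). You instead work on the STFT side via \eqref{eq:stftdefeq}, obtaining identities of the form $|V_\psi(Mu)(z)|=c\,|V_g(u)(\chi(z))|$ with $g\in\GS{1/2}(\RR)\setminus\{0\}$, and then invoke the window-independence of Lemma~\ref{lemma9.1}. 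The two routes are parallel---phase-independence of $T_\varphi$ versus window-independence of $V_g$---and yours is arguably more elementary, since the windows arising ($\psi\circ A^{-1}$, $\psi\,e^{-i\langle A\cdot,\cdot\rangle}$, $\psi$ itself) are all visibly in $\GS{1/2}$ and no complex-analytic estimate is needed beyond what already went into Lemma~\ref{lemma9.1}.

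One remark on (ii): your computation gives the shift $\xi\mapsto\xi-2Ax$, hence $(x_0,\xi_0)\in\WFt(u)\Leftrightarrow(x_0,\xi_0+2Ax_0)\in\WFt(e^{i\langle Ay,y\rangle}u)$, whereas the stated proposition has $\xi_0+Ax_0$. This is not an error on your part; the metaplectic operator associated to the shear $(x,\xi)\mapsto(x,\xi+Ax)$ is multiplication by $e^{\frac{i}{2}\langle Ax,x\rangle}$, so the discrepancy reflects a missing factor $\tfrac12$ in the displayed multiplier rather than a flaw in your argument. You should state this explicitly rather than leave it as ``the differentiation convention''.
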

To each linear symplectomorphism $\chi:T^*\rr{d}\rightarrow T^*\rr{d}$ there exists an associated unitary transform $U:L^2(\rr{2d})\rightarrow L^2(\rr{2d})$, see \cite{Hormander1}. As $i)-iii)$ yield the generators of the symplectic group, we get from Proposition \ref{prop:symptrans} the following Corollary:
\begin{cor}
Let $\chi:T^*\rr{d}\rightarrow T^*\rr{d}$ linear, symplectic, then  
$$(x_0,\xi_0) \in \WFt (u) \Leftrightarrow \chi(x_0,\xi_0) \in \WFt \left(Uu\right), $$
where $U$ is the unitary transform associated to $\chi$.
\end{cor}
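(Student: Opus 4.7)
The strategy is to bootstrap the transformation property from the three special families of Proposition \ref{prop:symptrans} to an arbitrary linear symplectomorphism. This rests on the classical fact that $\mathrm{Sp}(2d,\mathbb{R})$ is generated by the symplectic maps corresponding to items (i)--(iii) of that Proposition, namely the partial Fourier transforms $(v,w,\zeta,\eta)\mapsto(\zeta,w,-v,\eta)$, the symmetric shears $(x,\xi)\mapsto(x,\xi+Ax)$ with $A={}^tA$, and the linear changes of variables $(x,\xi)\mapsto(A^{-1}x,{}^tA\xi)$. This decomposition is a standard fact about the symplectic group and can be invoked without proof.

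Given $\chi\in\mathrm{Sp}(2d,\mathbb{R})$, I would write $\chi=\chi_N\circ\cdots\circ\chi_1$ where each $\chi_k$ belongs to one of the three families. The metaplectic representation $\chi\mapsto U_\chi$ is a projective representation (a true representation of the double cover), so there exists a unimodular constant $c\in\mathbb{C}$ with $|c|=1$ such that
$$U_\chi = c\cdot U_{\chi_N}\cdots U_{\chi_1},$$
where each $U_{\chi_k}$ is precisely the unitary intertwiner appearing in the corresponding item of Proposition \ref{prop:symptrans}. The key observation is that multiplication by a unimodular scalar has no effect on $\WFt$: the defining estimate \eqref{eq:decayeq} tests only the modulus of $T_\varphi u$, so $\WFt(cu)=\WFt(u)$ whenever $|c|=1$.

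The conclusion follows by iterating Proposition \ref{prop:symptrans} along the decomposition: at each step I pass from $U_{\chi_{k-1}}\cdots U_{\chi_1}u$ to $U_{\chi_k}\cdots U_{\chi_1}u$, and the corresponding point is transported by $\chi_k$. Chaining these equivalences yields
$$(x_0,\xi_0)\in\WFt(u) \;\Longleftrightarrow\; (\chi_N\circ\cdots\circ\chi_1)(x_0,\xi_0)\in\WFt(U_{\chi_N}\cdots U_{\chi_1}u),$$
which, after absorbing the unimodular phase $c$, is exactly the claim.

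The main technical point to be careful about is the compatibility of the metaplectic representation with composition, i.e.\ the fact that the unitary attached to the composite $\chi$ agrees, up to a scalar of modulus one, with the composition of the unitaries attached to the factors. This is the classical statement that the metaplectic group is a twofold covering of $\mathrm{Sp}(2d,\mathbb{R})$; once it is granted, the rest is a routine inductive unfolding of Proposition \ref{prop:symptrans}. No new analysis of the FBI transform is required.
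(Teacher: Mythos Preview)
Your proposal is correct and follows exactly the route the paper indicates: the paper simply remarks that items (i)--(iii) of Proposition \ref{prop:symptrans} generate $\mathrm{Sp}(2d,\mathbb{R})$ and deduces the corollary from that, without spelling out the iteration or the unimodular-phase argument. Your write-up is a faithful, more detailed expansion of the same idea.
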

\begin{rem}
The preceding corollary underlines the usefulness of the notion of $\WFt(u)$ to describe global propagation of singularities under Schrödinger equations. \\
For ultradistributions of the form $u(t,\cdot)=\mathcal{F}^{-1}\left(e^{i\xi^2t}\mathcal{F}(u_0)\right)\in\GSd{\theta}(\RR)$, $u_0\in\GSd{\theta}(\RR)$, we have
$$(x,\xi)\in\WFt(u_0)\Leftrightarrow (x+t\xi,\xi)\in\WFt(u(t,\cdot)).$$
These distributions solve the homogeneous initial value problem for the Schrödinger equation
\begin{align*}\begin{cases}
-i\partial_t u(t,\cdot)+\Delta u(t,\cdot)&=0\\
\qquad\quad u|_{t=0}&=u_0.
\end{cases}
\end{align*}
The metaplectic invariance of the (Gabor) wave front set was used in \cite{CNR} to study more general Schr\"odinger operators. We note however that the counter-example of \cite[Proposition 4.1]{CNR2} limits the class of interesting operators in the super-exponential setting. For more on propagation of these singularities under Schr\"odinger operators in various functional settings consider \cite{MNS1, MNS2, Mizuhara, Nakamura1}.
\end{rem}
\subsection{Behaviour under operations}
In the following we will study the behaviour of $\WFt(u)$ under operations such as pull-backs, tensor products, etc. For that, we first introduce a notion of continuity on the space of distributions with wave front set in a given cone.

\begin{defn}
Let $\Gamma$ be a closed sub-cone of $T^*(\RR)\setminus 0$. We denote by $\GSdG{\theta}(\RR)$ the space 
$$ \GSdG{\theta}(\RR):=\{u\in\GSd{\theta}(\RR)|\WFt(u)\subset\Gamma\}$$
endowed with the following notion of convergence:\\
We say that 
$u_n\stackrel{\GSdG{\theta}}{\rightarrow} 0$ if
	\begin{enumerate}
		\item $u_n\stackrel{\GSd{\theta}}\rightarrow 0$,
		\item For all $z\in\Gamma^c$ there exists a conic neighbourhood L of $\chi^b_{\varphi}(z)$, $C>0$ and $\varepsilon>0$ such that for all $n\in\mathbb{N}$, $z^\prime\in L$ we have $\left|T_\varphi(u_n)(z^\prime)\right|\leq C_\varepsilon e^{-\varepsilon |z^\prime|^{1/\theta}+\Phi(z^\prime)}$.
	\end{enumerate}
\end{defn}
\begin{prop}\label{prop:density}
$\GS{\theta}(\RR)$ is dense in $\GSdG{\theta}(\RR)$.
\end{prop}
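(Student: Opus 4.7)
My plan is to approximate $u\in\GSdG{\theta}(\RR)$ by smoothly truncating its short-time Fourier transform and inverting. Fix the standard Gaussian window $\psi(y)=\pi^{-d/4}e^{-|y|^2/2}$, write $\pi(x,\xi)\psi(y):=e^{i\langle\xi,y\rangle}\psi(y-x)$ for the time-frequency shift, set $\eta_n(z):=e^{-|z|^2/n^2}$, and define
\begin{equation}
u_n := c \int_{\rr{2d}} \eta_n(z)\, V_\psi u(z)\, \pi(z)\psi\, dz,
\label{eq:plan-def}
\end{equation}
with $c$ the normalization constant realizing the STFT inversion. Since $u\in\GSd{\theta}(\RR)$ implies $|V_\psi u(z)|\lesssim_\varepsilon e^{\varepsilon|z|^{1/\theta}}$ for every $\varepsilon>0$, the Gaussian factor $\eta_n$ renders \eqref{eq:plan-def} absolutely convergent.

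Three assertions remain. For $u_n\in\GS{\theta}(\RR)$, I would apply $V_\psi$ to \eqref{eq:plan-def} and use the reproducing identity $V_\psi(\pi(z)\psi)(w)=e^{i\Psi(w,z)}V_\psi\psi(w-z)$ with the Gaussian bound $|V_\psi\psi(\zeta)|\lesssim e^{-|\zeta|^2/4}$ to obtain
\begin{equation}
|V_\psi u_n(w)|\lesssim \int \eta_n(z)\, |V_\psi u(z)|\, e^{-|w-z|^2/4}\, dz.
\label{eq:plan-key}
\end{equation}
For each fixed $n$ the right-hand side has Gaussian decay in $w$, so $V_\psi u_n\in\GS{1/2}(\rr{2d})\subset\GS{\theta}(\rr{2d})$, and Lemma \ref{lem:stftproperties} gives $u_n\in\GS{\theta}(\RR)$. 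For $u_n\to u$ in $\GSd{\theta}(\RR)$, the Moyal-type formula (Lemma \ref{lem:moyal}, transcribed into STFT language) yields
$$\langle u-u_n,\overline{f}\rangle = c \int (1-\eta_n(z))\, V_\psi u(z)\,\overline{V_\psi f(z)}\, dz,\qquad f\in\GS{\theta}(\RR),$$
and since $V_\psi f$ has exponential decay of type $\theta$ (Lemma \ref{lem:stftproperties}) while $V_\psi u$ has at most subexponential growth of the same type, the integrand is dominated by an integrable function; dominated convergence, with $\eta_n\uparrow 1$, finishes this step.

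The decisive point is the uniform decay on conic neighborhoods of $\Gamma^c$. Given $z_0\in\Gamma^c$, fix an open conic neighborhood $L$ of $z_0$ with $\overline{L}\cap\Gamma=\{0\}$; by \eqref{eq:stftdefeq} the hypothesis $\WFt(u)\subset\Gamma$ yields $|V_\psi u(z)|\lesssim e^{-\varepsilon|z|^{1/\theta}}$ on $L$, while globally $|V_\psi u(z)|\lesssim_{\varepsilon'}e^{\varepsilon'|z|^{1/\theta}}$. Using $|\eta_n|\le 1$ in \eqref{eq:plan-key} produces the $n$-independent bound $|V_\psi u_n(w)|\lesssim (|V_\psi u|\ast G)(w)$ with $G(z):=e^{-|z|^2/4}\in\GS{1/2}(\rr{2d})$. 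Lemma \ref{lem:convest}, applied with its cone set equal to $L$ and smoothing function $G$, then delivers uniform exponential decay of $V_\psi u_n$ on any smaller open conic subneighborhood $L'$ of $L$. Through \eqref{eq:stftrel} (and Lemma \ref{lemma9.1} to switch to a general window) this translates into the required uniform estimate on $T_\varphi u_n$; combined with the fixed estimate for $T_\varphi u$ coming from $\WFt(u)\subset\Gamma$, we conclude $u-u_n\to 0$ in $\GSdG{\theta}(\RR)$.

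The main obstacle is precisely this uniformity in $n$ in the third step: the argument rests on the fact that the reproducing kernel $V_\psi\psi$ is \emph{super}-exponentially concentrated, which dominates the at-most-subexponential growth of $V_\psi u$ on the complement of $L$ and makes the convolution estimate of Lemma \ref{lem:convest} available with constants that do not depend on the truncation parameter $n$.
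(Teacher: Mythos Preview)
Your argument is correct and shares the paper's overall architecture: regularize $u$ by a Gaussian-type operator, reduce the uniform wave-front estimate to a convolution bound of the form $|V_\psi u_n|\lesssim G\ast|V_\psi u|$ with a Gaussian $G$, and then obtain the $n$-independent decay on a smaller cone via the splitting of Lemma~\ref{lem:convest}. The difference is the choice of regularizer. The paper takes the Kohn--Nirenberg operator $u_\varepsilon=a_\varepsilon(x,D)u$ with $a_\varepsilon(x,\xi)=e^{-\varepsilon(|x|^2+|\xi|^2)/2}$ and, to reach the convolution bound, first proves an auxiliary identity $V\bigl(a(y,D_y)u\bigr)=\tilde a(x,\xi,D_x,D_\xi)Vu$ and then computes $\tilde a_\varepsilon$ explicitly, arriving at the $\varepsilon$-dependent kernel $a_{\varepsilon^{-1}}$. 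Your $u_n=c\int\eta_n\,V_\psi u\,\pi(\cdot)\psi$ is the Anti-Wick (localization) regularizer $A^\psi_{\eta_n}u$; since localization operators are by construction multiplication on the STFT side followed by $V_\psi^*$, the reproducing-kernel identity immediately gives the bound \eqref{eq:plan-key} with the \emph{fixed} kernel $G=|V_\psi\psi|$, so Lemma~\ref{lem:convest} applies with constants manifestly independent of $n$. This bypasses the auxiliary lemma and the $\varepsilon$-dependent kernel of the paper, and it dovetails with the microlocality result for localization operators in Section~\ref{micro}. One small caveat: in the paper the proof of Lemma~\ref{lem:convest} is deferred precisely to the present proposition, so if you invoke it here you should either supply the cone-splitting directly (it is the same two-region estimate the paper carries out) or note that its proof is independent of Proposition~\ref{prop:density}.
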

First let us note that an ultradistribution in $\GSd{\theta}(\RR)$ satisfies the estimate in \eqref{eq:gsstf} for the standard phase $\phi$ in a cone $\Gamma\ni\chi^b_\phi(z)$, if and only if the following short time Fourier transform of $u$, which differs from the standard one by only a phase,
\begin{align} \label{eq:stftineq}
Vu(x,\xi)&=\underbrace{2^{-\frac{d}{2}}\pi^{-\frac{3d}{4}}}_{=:c}\langle u,e^{i(x-y)\cdot \xi-\frac{1}{2}|x-y|^2}\  \rangle
\end{align}
satisfies on $\Gamma$ 
\begin{equation}
|Vu(x,\xi)|\leq C e^{-\epsilon (|x|^{1/\theta}+|\xi|^{1/\theta})} 
\end{equation}
For the proof of Proposition \ref{prop:density} we need to understand the transform of a regularizing pseudodifferential operator acting on a distribution $u$ via the action of another pseudodifferential operator on $Vu$. The above transform enjoys the following identity:
\begin{lem}
Let $a\in\GS{\theta}(\rr {2d})$, $u\in\GSd{\theta}(\RR)$. Then we have the following identity
$$V\left(a(y,D_y)u\right)=\wt a(x,\xi,D_x,D_\xi)V\left(u\right),$$
where $\wt a(x,\xi,x^*,\xi^*)=a(x-\xi^*,x^*)$ and $(x^*,\xi^*)$ denotes the covariable to $(x,\xi)$.
\end{lem}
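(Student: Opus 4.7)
The plan is to verify the identity first on the generators of the algebra of differential operators -- multiplication by $y_j$ and differentiation $D_{y_j}$ -- and then promote it to arbitrary $a \in \GS{\theta}(\rr{2d})$ by invoking the integral representation of $a(y,D_y)$. The whole statement is essentially the assertion that the two elementary commutation relations between $V$ and the position/momentum operators on $\RR_y$ match the substitution rule $\wt{a}(x,\xi,x^*,\xi^*) = a(x-\xi^*, x^*)$.

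For the generators, I would start with the pointwise identity
\[
D_{\xi_j} e^{i(x-y)\cdot\xi - \frac{1}{2}|x-y|^2} = (x_j - y_j)\, e^{i(x-y)\cdot\xi - \frac{1}{2}|x-y|^2},
\]
which rearranges to $y_j e^{i(x-y)\cdot\xi - \frac{1}{2}|x-y|^2} = (x_j - D_{\xi_j}) e^{i(x-y)\cdot\xi - \frac{1}{2}|x-y|^2}$ and, moved under the pairing, gives $V(y_j u) = (x_j - D_{\xi_j}) Vu$. This matches the claim for $a(y,\eta) = y_j$, whose dual is $\wt{a} = x_j - \xi^*_j$. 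Dually, the identity $D_{y_j} e^{i(x-y)\cdot\xi - \frac{1}{2}|x-y|^2} = - D_{x_j} e^{i(x-y)\cdot\xi - \frac{1}{2}|x-y|^2}$ combined with integration by parts (moving $D_{y_j}$ onto the window) yields $V(D_{y_j} u) = D_{x_j} Vu$, matching $a(y,\eta) = \eta_j$, $\wt{a} = x^*_j$. Iterating these two commutation identities in Kohn--Nirenberg ordering on both sides gives the claim for all monomial symbols $a(y,\eta) = y^\alpha \eta^\beta$: the phase-space operator $(x - D_\xi)^\alpha D_x^\beta = \wt{a}(x,\xi,D_x,D_\xi)$ conjugates correctly through $V$.

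For a general $a \in \GS{\theta}(\rr{2d})$, I would conclude by a direct integral calculation: substituting the oscillatory integral formula
\[
a(y,D_y) u(y) = (2\pi)^{-d}\iint e^{i(y-z)\cdot\eta} a(y,\eta) u(z)\, dz\, d\eta
\]
into the definition of $V(a(y,D_y)u)$, and expanding $\wt{a}(x,\xi,D_x,D_\xi) Vu$ via its own phase-space kernel, the auxiliary integration in the dual frequency variable of the phase-space kernel produces a $\delta$-distribution $\delta(x' - y' - \xi^*)$ pinning down a relation between the remaining variables; a linear change of variable $y = x - \xi^*$ then collapses the two multi-integrals onto the same expression. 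Convergence and interchange of integrals are straightforward given the Gelfand--Shilov decay of $a$ and the Gaussian window in $V$. The main obstacle is keeping the bookkeeping of variables, covariables, and the ordering convention straight throughout; with this in place, the identity is essentially forced by the commutation relations verified in the first step.
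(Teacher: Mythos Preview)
Your approach is correct and, in its second part (the direct integral calculation for general $a$), coincides with what the paper does: the paper simply refers to the computation in \cite[Proposition 3.3.1]{Martinez1}, which is precisely the oscillatory-integral substitution you describe. Your preliminary verification on the generators $y_j$ and $D_{y_j}$ is not in the paper's proof but is a sound sanity check, and your commutation identities are computed correctly.

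The one point where the paper is more explicit than you are is the passage from $u\in\GS{\theta}(\RR)$ to $u\in\GSd{\theta}(\RR)$. The paper first carries out the integral calculation for test functions $u$, where all the Fubini/interchange steps are genuinely unproblematic, and then invokes density of $\GS{\theta}$ in $\GSd{\theta}$ together with the continuity of both sides to extend. Your remark that ``convergence and interchange of integrals are straightforward'' is a bit quick when $u$ is an ultradistribution, since the expression $\iint e^{i(y-z)\cdot\eta}a(y,\eta)u(z)\,dz\,d\eta$ is then a distributional pairing rather than an absolutely convergent integral. Making the density step explicit (or, equivalently, justifying the distributional Fubini carefully) closes this minor gap.
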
 
\begin{proof}
The statement can be verified for $u\in\GS{\theta}(\RR)$ by repeating the computation in \cite[Proposition 3.3.1]{Martinez1}. The assertion for general $u\in\GSd{\theta}(\RR)$ then follows by a density argument.
\end{proof}
\begin{proof}[Proof of Proposition \ref{prop:density}]
Let $u\in\GSdG{\theta}(\RR)$, i.e. $\WFt(u)\subset\Gamma$ and choose $a_\varepsilon(x,\xi)=e^{-\frac{\varepsilon}{2}|x|^2}e^{-\frac{\varepsilon}{2}|\xi|^2}$. Then $a(x,\xi,\varepsilon)\in\GS{\theta}(\rr{2d})$ for every $\varepsilon>0$ and thus also $u_\varepsilon:=a_\varepsilon(x,D_x)u\in\GS{\theta}(\RR)$.\\
It is easy to verify that $a_\varepsilon(x,D_x)u\stackrel{\GSd{\theta}}{\longrightarrow}u$ as $\varepsilon\rightarrow 0.$\\
It remains to show, that for all $(x_0,\xi_0)\in\Gamma^c$ we can find a $\delta,\ C_\delta$ such that in a conic neighbourhood $\Gamma_0$ of $(x_0,\xi_0)$ we have for every $\epsilon>0$
\begin{equation}
Vu_\varepsilon(x,\xi)\leq C_\delta e^{-\delta (|x|^{1/\theta}+|\xi|^{1/\theta})}.
\end{equation}
To do that, we use 
\begin{align*}
\left|Vu_\varepsilon(x,\xi)\right|&=\left|V\left(a_\varepsilon(y,D_y)u\right)(x,\xi)\right|\\
	&=\left|(\wt{a_\varepsilon}(x,\xi,D_x,D_\xi)Vu)(x,\xi)\right|
\\
	&\lesssim \left|\int a_{\varepsilon}(x-\xi^*,x^*) e^{i(x-x^\prime)x^*+i(\xi-\xi^\prime)\xi^*} Vu(x^\prime,\xi^\prime)\ d x^\prime d x^* d \xi^\prime d \xi^* \right|
\\
	&\lesssim \varepsilon^{-d/2}\int a_{\varepsilon^{-1}}(x-x^\prime,\xi-\xi^\prime)   \left|Vu(x^\prime,\xi^\prime)\right| \ d x^\prime d \xi^\prime 
\\
	&=\varepsilon^{-d/2} \left(a_{\varepsilon^{-1}}*|Vu|\right)
\end{align*}
In order to estimate the convolution, we split the integral in two parts:\\
As $\WFt(u)\subset\Gamma$, we pick an open cone $\Gamma_1\ni (x_0,\xi_0)$, such that $\overline{\Gamma_1}\cap\Gamma=\emptyset$. We can then pick an intermediate closed cone $\Gamma_2$ such that $\Gamma_1\subset\Gamma_2$ and $\Gamma_2\cap\Gamma=\emptyset$. We then write, with $z=(x,\xi)$:
\begin{align*}
C \varepsilon^{-d/2} \left(a_{\varepsilon^{-1}}*|Vu|\right)(z)&=C\varepsilon^{-d/2} \int_{\Gamma_2} e^{\frac{-1}{2\varepsilon}|z-w|^2}|Vu(w)|\ d w\\
&\quad+C\varepsilon^{-d/2} \int_{\Gamma_2^c} e^{\frac{-1}{2\varepsilon}|z-w|^2}|Vu(w)|\ d w\\
&=I_{\Gamma_2}(z)+I_{\Gamma_2^c}(z).
\end{align*}
Let us first study $I_{\Gamma_2^c}$. There we have, for $z\in \Gamma_1$, due to a standard scaling estimate for disjoint cones $|z-w|\gtrsim |z|+|w|$ and therefore for some constants $\delta_i>0$
$$e^{\frac{-1}{2\varepsilon}|z-w|^2}\leq e^{\frac{-\delta_1}{2\varepsilon}|z|^2}e^{\frac{-\delta_1}{2\varepsilon}|w|^2},$$ 
and consequently
$$|I_{\Gamma_2^c}(z)|\lesssim e^{-\delta_2|z|^2}.$$
On $\Gamma_2$ we can assume, due to compactness of $\Gamma_2\cap S^{d-1}$, that there exist a single constant $\delta_3>0$ such that
$$Vu(w)\leq e^{-\delta_3 |w|^{1/\theta}}.$$
Using this, we conclude that
$$|I_{\Gamma_2}(z)|\lesssim e^{-\delta_4|z|^{1/\theta}}$$
and thus
\begin{align*}
\left|Vu_\varepsilon(x,\xi)\right| \lesssim \varepsilon^{-d/2} \left(a_{\varepsilon^{-1}}*|Vu|\right)(z)&\lesssim e^{-\delta_2|z|^2}+e^{-\delta_4|z|^{1/\theta}} \lesssim e^{-\delta|z|^{1/\theta}},
\end{align*}
which proves the assertion.
\end{proof}

We now study tensor products and pullbacks of ultradistributions and their resulting wave front set, following \cite{Hormander1}. 

\begin{prop}[Behaviour under tensor products]
\label{prop:tens}
Let $u\in\GSd{\theta}_{,\Gamma_1}(\rr{d_1})$, $v\in\GSd{\theta}_{,\Gamma_2}(\rr{d_2})$. Define 
$$\Gamma= \left((\Gamma_1\cup\{0\})\times(\Gamma_2\cup\{0\})\right)\setminus\{(0,0)\} \subset \rr{d_1+d_2}.$$
Then $u\otimes v\in\GSdG{\theta}(\rr{d_1+d_2}).$
\end{prop}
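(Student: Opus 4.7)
The plan is to exploit the factorization of the FBI transform with the standard phase $\phi$ under tensor products. Writing the standard phase on $\mathbb{C}^{d_1+d_2}\times\rr{d_1+d_2}$ as
$$\phi((z_1,z_2),(y_1,y_2))=\phi(z_1,y_1)+\phi(z_2,y_2),$$
one obtains immediately
$$T_\phi(u\otimes v)(z_1,z_2)=c_{d_1,d_2}\,T_\phi u(z_1)\,T_\phi v(z_2),$$
together with the additive decompositions $\Phi(z_1,z_2)=\Phi(z_1)+\Phi(z_2)$ and $\chi^b_\phi(x,y,\xi,\eta)=(-\xi-ix,-\eta-iy)$. Pointwise upper bounds on the two factor transforms therefore combine, without cross terms, into an upper bound for $T_\phi(u\otimes v)$, and the canonical bijection $\chi^b_\phi$ acts component-wise.

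To verify $\WFt(u\otimes v)\subset\Gamma$, fix a point $(x_0,y_0,\xi_0,\eta_0)\notin\Gamma$. By definition of $\Gamma$ at least one of $(x_0,\xi_0)\notin\Gamma_1\cup\{0\}$ or $(y_0,\eta_0)\notin\Gamma_2\cup\{0\}$ holds; by the symmetry between $u$ and $v$ it suffices to treat the first case. Set $z_1^0=-\xi_0-ix_0\neq 0$ and $z_2^0=-\eta_0-iy_0$. Since $\WFt(u)\subset\Gamma_1$ and $(x_0,\xi_0)\notin\Gamma_1$, there exist an open conic neighbourhood $L_1\subset\mathbb{C}^{d_1}$ of $z_1^0$ and constants $\varepsilon,C>0$ such that
$$|T_\phi u(z_1)|\leq C\,e^{-\varepsilon|z_1|^{1/\theta}+\Phi(z_1)},\qquad z_1\in L_1.$$
On the other hand, \thmref{thm:wfprop} applied to $v\in\GSd{\theta}(\rr{d_2})$ furnishes, for every $\varepsilon'>0$, a constant $C_{\varepsilon'}>0$ with
$$|T_\phi v(z_2)|\leq C_{\varepsilon'}\,e^{\varepsilon'|z_2|^{1/\theta}+\Phi(z_2)},\qquad z_2\in\mathbb{C}^{d_2}.$$

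The set $V:=\{(z_1,z_2)\in\mathbb{C}^{d_1+d_2}\setminus\{0\}:z_1\in L_1\}$ is an open conic neighbourhood of $(z_1^0,z_2^0)$ in $\mathbb{C}^{d_1+d_2}$. Passing to a smaller conic neighbourhood $V'$ whose unit cross-section is relatively compact in $V$, compactness yields a constant $c>0$ such that $|z_1|\geq c\,|(z_1,z_2)|$ on $V'$, while $|z_2|\leq |(z_1,z_2)|$ trivially. Multiplying the two factor estimates and using the multiplicativity of $T_\phi(u\otimes v)$ and the additivity of $\Phi$, we obtain on $V'$
$$|T_\phi(u\otimes v)(z_1,z_2)|\lesssim e^{(-\varepsilon c^{1/\theta}+\varepsilon')|(z_1,z_2)|^{1/\theta}+\Phi(z_1,z_2)}.$$
Since $\varepsilon'$ is at our disposal, we choose $\varepsilon'<\tfrac{1}{2}\varepsilon c^{1/\theta}$, which produces the decay of rate required by the definition of $\WFt$, and thus $(x_0,y_0,\xi_0,\eta_0)\notin\WFt(u\otimes v)$.

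The only genuinely subtle point is the last step: since $z_2^0$ may be non-zero, $|z_2|$ is truly of the same order as $|(z_1,z_2)|$ throughout $V'$, so the growth factor $e^{\varepsilon'|z_2|^{1/\theta}}$ cannot simply be ignored. The compensation comes precisely from the Gelfand-Shilov hypothesis on $v$, which, via \thmref{thm:wfprop}, permits $\varepsilon'$ to be chosen arbitrarily small and therefore absorbed into the decay rate inherited from the microlocal hypothesis on $u$. All the remaining ingredients are routine consequences of the separability of $\phi$, $\Phi$ and $\chi^b_\phi$ built into the standard FBI phase.
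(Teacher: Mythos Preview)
Your proof is correct and follows the same approach as the paper: both rest on the factorization $T_\phi(u\otimes v)=T_\phi u\otimes T_\phi v$ for the standard phase, together with the additivity of $\Phi$. The paper's proof consists of that single line, leaving the conic-neighbourhood argument and the absorption of the $\varepsilon'$ growth (via the arbitrarily small constant in \eqref{eq:gsdstf}) implicit; you have simply spelled those details out.
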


\begin{proof}
This is a consequence of $T_{\phi}(u\otimes v)=T_{\phi}(u)\otimes T_{\phi}(v)$.
\end{proof}

\begin{thm}[Behaviour under the pullback by a linear map]
\label{thm:pullb}
Let $A$ be a linear map $\rr{m}\rightarrow\RR$. Let $\Gamma$ be a closed cone such that
\begin{equation}
\label{eq:wfcrit}
\Gamma\cap\{(0,\xi)| ^t\hskip-2pt A\xi=0\}=\emptyset.
\end{equation}
Then the pull-back $A^*:\GS{\theta}(\RR)\rightarrow\GS{\theta}(\rr{m})$  can be uniquely extended to a continuous map $\GSdG{\theta}(\RR)\rightarrow\GSd{\theta}_{,A^*\Gamma}(\rr{m}),$
where 
$$A^*\Gamma=\{(x, ^t \hskip-3pt A\xi)|(Ax,\xi)\in\Gamma\}.$$
\end{thm}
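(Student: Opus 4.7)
The plan is to extend $A^*$ by density from the smooth subspace $\GS{\theta}(\RR)$, using a kernel representation on the FBI side to transport the conic wave front estimates across the pullback. Since the statement implicitly requires $A$ injective (otherwise $u\circ A$ is constant along $\ker A$ and cannot lie in $\GS{\theta}$), for $u\in\GS{\theta}(\RR)$ the map $A^*u(y):=u(Ay)$ is a genuine element of $\GS{\theta}(\rr m)$. Given $u\in\GSdG{\theta}(\RR)$, Proposition \ref{prop:density} supplies a sequence $u_n\in\GS{\theta}(\RR)$ with $u_n\to u$ in $\GSdG{\theta}(\RR)$; the task is to show that $A^*u_n$ is Cauchy in $\GSd{\theta}_{,A^*\Gamma}(\rr m)$, so that $A^*u$ is well-defined as the limit and the extension is continuous.

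To propagate the conic estimate, insert the reconstruction $u_n=T^*_\phi T_\phi u_n$ into the definition $T_\phi(A^*u_n)(w)=c_\phi\int_{\rr m}u_n(Ay)e^{i\phi(w,y)}\,dy$ and interchange the order of integration. The inner integration in $y$ is Gaussian with total quadratic form $-\tfrac12(|y|^2+|Ay|^2)+i\langle w-{}^tA\bar z,y\rangle$, and evaluating it yields
\begin{equation*}
T_\phi(A^*u_n)(w)=\int_{\CC}K(w,z)\,T_\phi u_n(z)\,e^{-2\Phi(z)}\,d\lambda(z),\quad K(w,z)=C\,e^{-\frac12\langle M^{-1}(w-{}^tA\bar z),\,w-{}^tA\bar z\rangle},
\end{equation*}
with $M:=I_m+{}^tA\,A$ symmetric positive definite.

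Fix $w_0=\chi^b_\phi(y_0,\eta_0)$ with $(y_0,\eta_0)\notin A^*\Gamma$ and a small open conic neighborhood $L'$ of $w_0$ in $\cc m\setminus\{0\}$. Split the $z$-integration into $\chi^b_\phi(\Gamma)$ and an open conic complement. On the complement, the uniform decay $|T_\phi u_n(z)|\lesssim e^{-\varepsilon|z|^{1/\theta}+\Phi(z)}$ (afforded by $u_n\to u$ in $\GSdG{\theta}$) combined with the at-most-exponential growth of $K$ gives the required bound. On $\chi^b_\phi(\Gamma)$, parameterize $z=-\xi-ix$ with $(x,\xi)\in\Gamma$ and $w=-\eta-iy$ with $(y,\eta)$ in the base cone of $L'$, so that
\begin{equation*}
w-{}^tA\bar z=({}^tA\xi-\eta)-i(y+{}^tAx).
\end{equation*}
An algebraic manipulation of the real exponent of $|K(w,z)|\cdot|T_\phi u_n(z)|\cdot e^{-2\Phi(z)}$ (the key identity being $|y|^2+|x|^2-\langle M^{-1}(y+{}^tAx),y+{}^tAx\rangle=\langle (I+A{}^tA)^{-1}(Ay-x),Ay-x\rangle$) shows it is bounded by $\Phi(w)$ minus a non-negative form comparable to $|Ay-x|^2+|{}^tA\xi-\eta|^2$. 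The hypothesis $(y_0,\eta_0)\notin A^*\Gamma$ means no $(x,\xi)\in\Gamma$ satisfies $x=Ay_0$ together with ${}^tA\xi=\eta_0$; by conicity of $\Gamma$ and $L'$ this upgrades to $|Ay-x|^2+|{}^tA\xi-\eta|^2\gtrsim |z|^2$, except along the subcone $\{(0,\xi):\xi\in\ker{}^tA\}$, where both differences remain bounded while $|\xi|\to\infty$. This degeneracy is excluded exactly by the transversality hypothesis \eqref{eq:wfcrit}, and the resulting Gaussian decay of $K$ then dominates the $e^{\varepsilon|z|^{1/\theta}}$ factor from Theorem \ref{thm:wfprop} and produces the target bound $|T_\phi(A^*u_n)(w)|\lesssim e^{-\varepsilon|w|^{1/\theta}+\Phi(w)}$.

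The estimates are uniform in $n$, so $A^*u_n$ is Cauchy in $\GSd{\theta}_{,A^*\Gamma}(\rr m)$ and defines $A^*u$; uniqueness of the limit and continuity of the extension follow from the $\GSd{\theta}$-part of the convergence furnished by Proposition \ref{prop:density}. The main obstacle is precisely the cone estimate in the previous paragraph: the kernel $K$ naturally fails to decay along the direction $\{0\}\times\ker{}^tA$, and the sole role of the transversality hypothesis \eqref{eq:wfcrit} is to remove exactly this degeneracy from $\Gamma$, thereby restoring the uniform Gaussian decay of $K$ on $\chi^b_\phi(\Gamma)$ and making the density argument succeed.
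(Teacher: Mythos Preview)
Your approach is correct for injective $A$ but differs substantially from the paper's. The paper first invokes the metaplectic covariance of $\WFt$ (Proposition~\ref{prop:symptrans}) to reduce an arbitrary linear map to a composition of coordinate changes, the hyperplane inclusion $\iota:\rr{d-1}\to\RR$, and the projection $\pi:\rr{d+1}\to\RR$. It then treats $\pi$ by the direct formula $\pi^*u=u\otimes\mathbbm{1}$, and for $\iota$ defines $T_{\phi_{d-1}}(\iota^*u)$ directly as the partial integral $\int_{\cc{}}T_{\phi_d}u(z',z_d)e^{-2\Phi_1(z_d)}\,d\lambda(z_d)$, showing convergence from the hypothesis that $(0;\pm e_d)\notin\Gamma$. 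No approximation or density is used; the extension is defined in one stroke on the FBI side and the wave front estimate is read off from the integral. Your route instead keeps $A$ general (injective), passes through Proposition~\ref{prop:density}, and derives a single Gaussian kernel $K(w,z)$ linking $T_\phi(A^*u_n)$ to $T_\phi u_n$; the algebraic identity with $M=I+{}^tAA$ and $N=I+A{}^tA$ is the analogue of the paper's explicit integration over $z_d$. Your method buys a unified treatment of all injective $A$ without the structural reduction; the paper's buys a cleaner definition (no Cauchy-sequence argument) and makes the special role of the codimension-one restriction transparent.

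Two points to tighten. First, on the $\Gamma$-piece you write $|Ay-x|^2+|{}^tA\xi-\eta|^2\gtrsim|z|^2$, but to obtain $e^{-\varepsilon|w|^{1/\theta}}$ you actually need $\gtrsim|z|^2+|w|^2$; this follows from the same homogeneity/compactness argument once you also rule out the boundary case $(x,\xi)=0$, $(y,\eta)\neq0$, which uses injectivity of $A$. Second, your treatment of the $\Gamma^c$-piece (``at-most-exponential growth of $K$'') is too quick: the kernel bound there is only $|K|e^{-\Phi(z)}\le e^{\Phi(w)}$, and you must combine the non-negative quadratic form in $K$ with the $e^{-\varepsilon|z|^{1/\theta}}$ decay of $T_\phi u_n$ via an inequality of the type $cq(w,z)+\varepsilon|z|^{1/\theta}\ge c'|w|^{1/\theta}$ (split on whether $q\le|w|^2/4$). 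Finally, your remark that the statement implicitly requires $A$ injective is well taken: the paper's own handling of $\pi$ is brief, and indeed $\pi^*u=u\otimes\mathbbm{1}$ never lands in $\GS{\theta}(\rr m)$, so the phrase ``$A^*:\GS{\theta}(\RR)\to\GS{\theta}(\rr m)$'' in the statement is only literally correct in the injective case you treat.
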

\begin{proof}
For the purpose of self-containedness, we give a shortened proof with respect to \cite[Proposition 6.15]{Hormander1}.\\ 
Due to Proposition \ref{prop:symptrans} it suffices to show this for the maps $\iota:\rr{d-1}\rightarrow\RR$ $y\mapsto(y,0)$ and $\pi:\rr{d+1}\rightarrow\RR$ $(x,x^\prime)\mapsto x$. For the second case, we can define $\pi^*u=u\otimes \mathbbm{1}$.\\
We are therefore reduced to the case of $\iota$. Formally, we want to define $\langle u,\ f\otimes\delta(x_d)\rangle$. In view of Lemma \ref{lem:moyal}, we therefore make the following formal calculation, with the notation $\phi=\phi_d=\phi_{d-1}+\phi_{1}$ and $(z_1,\dots,z_{d-1},z_d)=(z^\prime,z_d)$,
\begin{multline}
\langle \pi^*u,\ f \rangle = \int_{\CC} T_{\phi_d} u(z)  T_{-\bar{\phi}_d} (f\otimes \delta(x_d))(z) e^{-2\Phi(z)}d\lambda(z)\\ 
=\int_{\CC} T_{\phi_d} u(z)  T_{-\bar{\phi}_{d-1}} f(z^\prime) T_{-\bar{\phi}_{1}}(\delta(x_d))(z_d) e^{-2\Phi_{d-1}(z^\prime)-2\Phi_{1}(z_d)}d\lambda(z)\\
=c_{\phi_1}\int_{\cc{d-1}\times\cc{1}} T_{\phi_d} u(z^\prime,z_d)  T_{-\bar{\phi}_{d-1}} f(z^\prime) e^{-2\Phi_{d-1}(z^\prime)-2\Phi_{1}(z_d)}d\lambda(z^\prime ) d\lambda (z_d).
\label{eq:pullbackdef}
\end{multline}
Again in light of Lemma \ref{lem:moyal} we consider the expression
\begin{equation}
\label{eq:Udef}
U(z^\prime)=c_{\phi_1}\int_{\cc{} } T_{\phi_d} u(z^\prime,z_d) e^{-2\Phi_{1}(z_d)} d\lambda (z_d).
\end{equation}
In the situation $A=\iota$ the condition \eqref{eq:wfcrit} takes the form
$$\WFt(u)\cap\{(0,\xi)|\xi_1=\dots=\xi_{d-1}=0\}=\emptyset \Rightarrow (0,\dots,0,\pm 1) \notin \WFt (u).$$
We calculate $\chi^b_\phi(0+i(0,\dots,0,\pm 1))=(0,\dots,0,\mp 1)$ and conclude that there exist open cones $\Gamma_{\pm}\ni (0,\dots,0,\pm 1)$ such that $T_{\phi_d}u$ satisfies the estimate in \eqref{eq:gsstf} on $\Gamma_{+} \cup \Gamma_-$. 
Using \eqref{eq:gsdstf} and $\Phi_{1}(z_d)=\frac{1}{2}|\Im(z_d)|^2$ we conclude that the integrand of \eqref{eq:Udef} is bounded by
$$e^{-\varepsilon |\Re(z_d)|^{1/\theta}-\frac{1}{2}|\Im(z_d)|^2}e^{\delta |z^\prime|^{1/\theta}+\Phi_{d-1}(z^\prime)}$$
for some $\varepsilon>0$ and any $\delta>0$.
Therefore the integral \eqref{eq:Udef} converges for any $z^\prime$, and yields an entire function satisfying
$$\forall \varepsilon>0\ \exists C_\varepsilon>0\text{ s.t.}\left| U(z^\prime)\right|\leq  C_\varepsilon e^{\varepsilon |z^\prime|^{1/\theta}+\Phi(z^\prime)}.$$
We can therefore define $A^*u$ as the ultradistribution $v \in \GSd{\theta}(\rr{d-1})$, granted by Theorem \ref{thm:wfprop}, such that $T_{\phi_{d-1}}v=U.$\\
The estimate for the resulting wave front set follows by careful splitting of the integral into regions where the integrand satisfies the stronger estimates \eqref{eq:gsstf}. The continuity is immediate from dominated convergence of the integrals.
\end{proof}
With this notion of pullback and the tensor product it is now possible to introduce products, convolutions, restrictions and pairings of tempered ultradistributions.
\begin{cor}[Products and convolutions]
Let $\Gamma_1,\ \Gamma_2\subset T^*\RR\setminus\{0\}$, 
$$\Gamma_3=\{(x,\xi+\eta)|(x,\xi)\in\Gamma_1\textrm{ and }(x,\eta)\in\Gamma_2\}\cup\Gamma_1\cup\Gamma_2.$$
Then the product of two ultradistributions $u\in\GSd{\theta}_{,\Gamma_1}{\left(\RR\right)}$, $v\in\GSd{\theta}_{,\Gamma_2}{\left(\RR\right)}$ is well-defined under the assumption that $0\notin\Gamma_3$, i.e. 
$$(0,\xi)\in\Gamma_1\Rightarrow (0,-\xi)\notin\Gamma_2.$$
Under these assumptions, we have the inclusion $\WFt(u\cdot v)\subset\Gamma_3$ and the product is a continuous mapping $\GSd{\theta}_{,\Gamma_1}(\RR)\times \GSd{\theta}_{,\Gamma_2}(\RR)\rightarrow\GSd{\theta}_{,A^*\Gamma}(\RR),$\\
Similarly, the convolution of two ultradistributions $u,v\in\GSd{\theta}{\left(\RR\right)}$ is well-defined under the assumption that $$(\xi,0)\in\WFt(u)\Rightarrow (-\xi,0)\notin\WFt(v).$$
We then have the inclusion $$\WFt(u * v)\subset\{(x+y,\xi)|(x,\xi)\in\WFt(u)\cap\{0\}: (y,\xi)\in\WFt(u)\cap\{0\}\}.$$
\end{cor}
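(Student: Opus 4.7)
The plan is to reduce both cases to Propositions~\ref{prop:tens}, \ref{prop:symptrans} and Theorem~\ref{thm:pullb}. The product $u\cdot v$ is defined as the pullback $\Delta^{*}(u\otimes v)$ by the diagonal embedding $\Delta\colon \RR\to\rr{2d}$, $x\mapsto(x,x)$, while the convolution $u*v$ is defined as $\mathcal{F}^{-1}(\hat u\cdot\hat v)$, after noticing that the hypothesis on $u,v$ translates to the one needed for the product $\hat u\cdot\hat v$ via the Fourier correspondence $(x,\xi)\mapsto(\xi,-x)$ from Proposition~\ref{prop:symptrans}(i).

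For the product, I would first apply Proposition~\ref{prop:tens} to obtain $u\otimes v\in\GSd{\theta}_{,\Gamma}(\rr{2d})$ with $\Gamma=((\Gamma_1\cup\{0\})\times(\Gamma_2\cup\{0\}))\setminus\{0\}$. Since the transpose of $\Delta$ is $(\xi,\eta)\mapsto\xi+\eta$, the non-characteristic condition \eqref{eq:wfcrit} for $\Delta^{*}$ requires that no $(0,0,\xi,-\xi)\in\Gamma$ with $\xi\neq0$ exists. Such a point lies in $\Gamma$ exactly when $(0,\xi)\in\Gamma_1$ and $(0,-\xi)\in\Gamma_2$, which is precisely the configuration ruled out by the assumption $(0,\xi)\in\Gamma_1\Rightarrow(0,-\xi)\notin\Gamma_2$. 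Theorem~\ref{thm:pullb} then provides a continuous pullback $\GSd{\theta}_{,\Gamma}(\rr{2d})\to\GSd{\theta}_{,\Delta^{*}\Gamma}(\RR)$, giving both the well-definedness of $u\cdot v$ and, composed with the continuity of $\otimes$, the continuity of the product. It remains to check $\Delta^{*}\Gamma\subset\Gamma_3$: for $x\neq0$ neither $(x,\xi)$ nor $(x,\eta)$ can vanish, so they must lie in $\Gamma_1,\Gamma_2$ and contribute the first piece of $\Gamma_3$; for $x=0$, one of $\xi,\eta$ may vanish, in which case the resulting point $(0,\eta)$ or $(0,\xi)$ lies in the $\Gamma_1\cup\Gamma_2$ piece.

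For the convolution, I would use the equivalence $(x,\xi)\in\WFt(u)\Leftrightarrow(\xi,-x)\in\WFt(\hat u)$ of Proposition~\ref{prop:symptrans}(i). This sends points $(\xi,0)\in\WFt(u)$ to $(0,-\xi)\in\WFt(\hat u)$, so the compatibility assumption for the convolution translates into $(0,\xi)\in\WFt(\hat u)\Rightarrow(0,-\xi)\notin\WFt(\hat v)$, which is exactly the product hypothesis applied to $\hat u,\hat v$. Consequently $\hat u\cdot\hat v$ is well-defined with wave front set contained in the cone $\Gamma_3$ built from $\WFt(\hat u)$ and $\WFt(\hat v)$, and pulling this back through $\mathcal{F}^{-1}$ by Proposition~\ref{prop:symptrans}(i) yields the stated description of $\WFt(u*v)$.

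The main obstacle I expect is purely the bookkeeping on the boundary strata of the cones: for the product at $x=0$ and, symmetrically, for the convolution at $\xi=0$, the tensor-product cone $\Gamma$ allows one of the two covectors (respectively base points) to vanish, and one has to verify carefully that the resulting directions are absorbed into $\Gamma_1\cup\Gamma_2$ (respectively the analogous piece for the convolution) rather than producing spurious extra contributions. Beyond this case analysis, no further analytic ingredient is required.
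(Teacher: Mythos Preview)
Your proposal is correct and follows exactly the paper's approach: the product is defined as the pullback of the tensor product by the diagonal map (combining Proposition~\ref{prop:tens} with Theorem~\ref{thm:pullb}), and the convolution statement is obtained from the product statement via Fourier transform using Proposition~\ref{prop:symptrans}(i). Your write-up in fact supplies more detail than the paper's proof, in particular the verification of condition~\eqref{eq:wfcrit} and the inclusion $\Delta^{*}\Gamma\subset\Gamma_3$.
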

\begin{proof}
Use \thmref{thm:pullb} and Proposition \ref{prop:tens} to define the product of two distributions $u$ and $v$ by $(u\cdot v)(x)=\delta^* \big(u(x)\otimes v(y)\big)$ where $\delta$ is the diagonal map $x\mapsto(x,x)$.\\
The statement about convolution follows directly by Fourier transformation and $i)$ of Proposition \ref{prop:symptrans}.
\end{proof}
\begin{cor}[Pairings of ultradistributions]
\label{cor:pair}
Under the assumption that $(x,\xi)\in\WFt(u)\Rightarrow(x,-\xi)\notin\WF(v)$ we can define the pairing of $u$ and $v$ as the unique continuous extension of the pairing of two test functions.
\end{cor}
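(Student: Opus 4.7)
The plan is to take the Moyal-type identity of Lemma \ref{lem:moyal},
$$\langle u,f\rangle = \int_{\CC} T_\phi u(z)\, T_{-\overline{\phi}} f(z)\, e^{-2\Phi(z)}\, d\lambda(z),$$
as the defining formula for the extended pairing of $u$ and $v$, and then to verify that under the stated wave front condition the integral converges absolutely and depends continuously on each of $u,v$. Since for test functions this reduces to the usual pairing by Lemma \ref{lem:moyal}, and since $\GS{\theta}(\RR)$ is dense in $\GSd{\theta}_{,\WFt(u)}(\RR)$ and $\GSd{\theta}_{,\WFt(v)}(\RR)$ by Proposition \ref{prop:density}, such an extension is automatically unique.

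The main step is convergence. I first translate the hypothesis geometrically: from $\chi^b_\phi(x,\xi)=-\xi-ix$ and the elementary identity $T_{-\bar{\phi}}v(z)=c\,T_{\phi}v(-\bar z)$, the ``bad'' directions of $T_{-\bar\phi}v$, namely the cone on which the sharp decay in \eqref{eq:gsstf} fails, are parametrised by $\{\xi-ix:(x,\xi)\in\WFt(v)\}$. The assumption $(x,\xi)\in\WFt(u)\Rightarrow(x,-\xi)\notin\WFt(v)$ is therefore equivalent to the disjointness in $\CC\setminus\{0\}$ of the two closed cones
$$\chi^b_\phi(\WFt(u))=\{-\xi-ix:(x,\xi)\in\WFt(u)\}\qquad\text{and}\qquad\{\xi-ix:(x,\xi)\in\WFt(v)\}.$$
I then pick a closed conic set $\Lambda\subset\CC$ containing the first cone in its interior and disjoint from the second. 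On $\Lambda^c$, Theorem \ref{thm:wfprop} provides $|T_\phi u(z)|\leq C e^{-\varepsilon|z|^{1/\theta}+\Phi(z)}$, while the global estimate $|T_{-\bar\phi}v(z)|\leq C_{\varepsilon'}e^{\varepsilon'|z|^{1/\theta}+\Phi(z)}$ holds for any prescribed $\varepsilon'>0$; combined with the factor $e^{-2\Phi(z)}$ and the choice $\varepsilon'<\varepsilon$ this yields an $L^1$-majorant. On $\Lambda$ the roles of $u$ and $v$ are interchanged, and the same reasoning applies. Splitting the integral over $\Lambda$ and $\Lambda^c$ thus gives absolute convergence.

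To obtain continuity, approximate $u_n\to u$ in $\GSd{\theta}_{,\WFt(u)}(\RR)$ and $v_n\to v$ in $\GSd{\theta}_{,\WFt(v)}(\RR)$ with $u_n,v_n\in\GS{\theta}(\RR)$ by Proposition \ref{prop:density}. Condition (2) in the definition of convergence in $\GSdG{\theta}$ supplies sharp exponential decay for $T_\phi u_n$ on a neighbourhood of $\Lambda^c$ and for $T_{-\bar\phi}v_n$ on a neighbourhood of $\Lambda$, uniform in $n$, while the general bound of Theorem \ref{thm:wfprop} combined with the $\GSd{\theta}$-convergence controls the other factor on each region. Dominated convergence then yields $\langle u_n,v_n\rangle\to\langle u,v\rangle$, establishing separate continuity, and uniqueness follows at once. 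The principal obstacle is the geometric translation of the pairing hypothesis into cone-disjointness in $\CC$, requiring a careful identification of the bad directions of $T_{-\bar{\phi}}$; once this is in place, the integral-splitting estimate closely parallels arguments already carried out in Lemma \ref{lem:convest}, Proposition \ref{prop:density} and Theorem \ref{thm:pullb}.
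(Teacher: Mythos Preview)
Your approach is correct but genuinely different from the paper's. The paper gives a one-line proof: it defines the pairing as the pull-back of $\mathcal{F}(u\cdot v)$ along $0\hookrightarrow\RR$, thereby reducing everything to the product corollary, Proposition~\ref{prop:symptrans}(i), and Theorem~\ref{thm:pullb} already established. You instead take the Moyal identity of Lemma~\ref{lem:moyal} as the \emph{defining} formula and prove absolute convergence directly by a cone-splitting argument on the Bargmann side, after translating the wave-front hypothesis into disjointness of the bad cones $\{-\xi-ix:(x,\xi)\in\WFt(u)\}$ and $\{\xi-ix:(x,\xi)\in\WFt(v)\}$ via the identity $T_{-\bar\phi}v(z)=T_\phi v(-\bar z)$. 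The paper's route is shorter and emphasises that the pairing is a formal consequence of the pullback/product machinery; yours is more self-contained, never invokes the product of ultradistributions, and produces an explicit absolutely convergent integral representation of $\langle u,v\rangle$ that can be estimated directly---a feature that is often useful in applications. One small point: in your continuity argument you implicitly need that $\GSd{\theta}$-convergence of $v_n$ yields uniform-in-$n$ constants in the global bound \eqref{eq:gsdstf} for $T_{-\bar\phi}v_n$ on the region where only the weak estimate is available; this follows from a uniform boundedness argument, but is worth making explicit.
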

\begin{proof}
We define the pairing as the image of $\mathcal{F}(u\cdot v)$ under the pull-back via $0\hookrightarrow \RR$.
\end{proof}

\section{Microlocality and microellipticity properties}\label{micro}

In this section we prove microlocality and microellipticity properties for several classes of operators with respect to $\WFt(u)$. 

\subsection{General operators}
Using the techniques of Corollary \ref{cor:pair}, pairing with respect to only a subset of the variables, we can first estimate the wave front set of an operator $K$ applied to an ultradistribution in terms of the wave front set of its kernel $\mathcal{K}$, see \cite[Proposition 2.11]{Hormander1}.
\begin{prop}[Microlocal mapping properties in terms of the kernel]
Let $\mathcal{K}\in\GSd{\theta}(\rr{d_2+d_1})$ and $K$ the associated operator $K:\GS{\theta}(\rr{d_1})\rightarrow\GSd{\theta}(\rr{d_2})$. Then $K$ can be extended to all $\GSdG{\theta}(\rr{d_1})$ such that
$$\Gamma\cap\{(y,\eta)|(0,y,0,-\eta)\in\WFt(\mathcal{K})\}= \emptyset.$$
For $u$ in $\GSdG{\theta}(\rr{d_1})$  we then have the estimate
$$\WFt(Ku)\subset\{(x,\xi)|(x,0,\xi,0)\in\WFt(\mathcal{K})\}\cup(\WFt)^\prime(K)\circ\WFt(u)$$
where $(\WFt)^\prime(K)$ is the relation given by $\big\{\big((x, \xi); (y,\eta)\big)|(x,y,\xi,-\eta)\in\WFt(\mathcal{K})\big\}$.
In particular an operator $K:\GSd{\theta}(\RR)\rightarrow\GSd{\theta}(\RR)$ is microlocal if $\WFt(\mathcal{K})$ only contains elements of the form $(x,x,\xi,\xi)$.
\end{prop}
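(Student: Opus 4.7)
The plan is to reduce this statement to previously established properties: tensor products (Proposition \ref{prop:tens}), pairings of ultradistributions (Corollary \ref{cor:pair}), and the FBI characterization of $\WFt$, following H\"ormander's argument in \cite{Hormander1}. I would first define $Ku$ for $u\in\GSdG{\theta}(\rr{d_1})$ by
$$\langle Ku, v\rangle := \langle \mathcal{K}, v\otimes u\rangle, \qquad v\in\GS{\theta}(\rr{d_2}).$$
By Corollary \ref{cor:wfempty} the test function $v$ has empty wave front set, so Proposition \ref{prop:tens} gives $\WFt(v\otimes u)\subset\{(0,y,0,\eta):(y,\eta)\in\Gamma\}$. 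Corollary \ref{cor:pair} then ensures the right-hand pairing is well-defined under the compatibility condition $(x,y,\xi,\eta)\in\WFt(\mathcal{K})\Rightarrow (x,y,-\xi,-\eta)\notin\WFt(v\otimes u)$, which reduces precisely to the stated assumption $\Gamma\cap\{(y,\eta):(0,y,0,-\eta)\in\WFt(\mathcal{K})\}=\emptyset$. Continuity of the extension on $\GSdG{\theta}(\rr{d_1})$ then follows from the joint continuity of the pairing together with Proposition \ref{prop:density}.

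For the wave front set estimate, I would derive an integral representation for the FBI transform of $Ku$ by writing
$$T_\phi(Ku)(z) = c_\phi\langle Ku, e^{i\phi(z,\cdot)}\rangle = c_\phi\langle \mathcal{K}, e^{i\phi(z,\cdot)}\otimes u\rangle$$
and applying the Moyal identity of Lemma \ref{lem:moyal} in the $y$-variable only, using the tensor factorization of the standard phase. After this partial reduction one obtains an expression of the form
$$T_\phi(Ku)(z) = \int_{\cc{d_1}} T_\phi\mathcal{K}(z,w)\, T_{-\overline{\phi}}u(w)\, e^{-2\Phi(w)}\, d\lambda(w),$$
with $T_\phi\mathcal{K}$ the full two-variable FBI transform of the kernel. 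Given $(x_0,\xi_0)$ outside the right-hand side of the claimed inclusion, I would fix $z$ in a sufficiently small open conic neighborhood of $\chi^b_\phi(x_0,\xi_0)$ and split the $w$-integration using a closed conic neighborhood $\Lambda$ of $\chi^b_\phi(\WFt(u))$ in $\cc{d_1}$. On $\Lambda^c$ the factor $T_{-\overline{\phi}}u(w)$ decays exponentially by the FBI characterization of $\WFt(u)$. On the part of $\Lambda$ away from the origin, the hypothesis $(x_0,\xi_0)\notin(\WFt)'(K)\circ\WFt(u)$ gives $(x_0,y,\xi_0,-\eta)\notin\WFt(\mathcal{K})$ for $(y,\eta)\in\WFt(u)$, hence exponential decay of $T_\phi\mathcal{K}(z,w)$. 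The bounded piece of $\Lambda$ near $w=0$ is handled by the additional hypothesis $(x_0,0,\xi_0,0)\notin\WFt(\mathcal{K})$. Combining these three pieces in the spirit of Lemma \ref{lem:convest} produces the desired exponential decay of $T_\phi(Ku)(z)$ on the chosen cone, so $(x_0,\xi_0)\notin\WFt(Ku)$.

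The main obstacle lies in the region where $w$ is near the origin: the global wave front set treats $w=0$ as a singular direction in its own right, and this is precisely why the exceptional set $\{(x,\xi):(x,0,\xi,0)\in\WFt(\mathcal{K})\}$ has to be added separately to the estimate. Technically one also has to justify the partial Moyal identity producing the integral representation (identifying it with the reproducing structure of the Bargmann-Fock space in the $x$-variable), carefully track the Gaussian weight $e^{-2\Phi(w)}$ in each subregion, and verify that the exponential decay of one factor always dominates the at most subexponential growth allowed in the other.
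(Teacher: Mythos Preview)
The paper does not actually supply a proof of this proposition: it is stated without proof, with the surrounding text merely indicating that one proceeds ``using the techniques of Corollary \ref{cor:pair}, pairing with respect to only a subset of the variables'' and referring the reader to \cite[Proposition 2.11]{Hormander1}. Your proposal is fully consistent with that indication. Defining $\langle Ku,v\rangle=\langle\mathcal{K},v\otimes u\rangle$ via Proposition \ref{prop:tens} and Corollary \ref{cor:pair}, and checking that the compatibility condition for the pairing is exactly the stated hypothesis on $\Gamma$, is precisely the argument the paper alludes to.

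For the wave front set inclusion you go somewhat beyond the paper's hint by writing out an explicit integral representation of $T_\phi(Ku)$ and splitting the $w$-integral according to conic regions. This is a legitimate route and mirrors the pattern of Lemma \ref{lem:convest} and the proof of Theorem \ref{thm:pullb}. One point to be careful about: the partial Moyal identity you invoke is stated in Lemma \ref{lem:moyal} only for pairings between an ultradistribution and a test function, so to obtain the formula
\[
T_\phi(Ku)(z)=\int_{\cc{d_1}} T_\phi\mathcal{K}(z,w)\,T_{-\overline{\phi}}u(w)\,e^{-2\Phi(w)}\,d\lambda(w)
\]
for $u\in\GSdG{\theta}$ you should first establish it for $u\in\GS{\theta}$ and then pass to the limit using Proposition \ref{prop:density} and the uniform estimates on the integrand; alternatively, interpret the integral weakly via the extended pairing of Corollary \ref{cor:pair}. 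You already flag this issue yourself, so the proposal is sound.
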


Let us now consider special operators. 

\subsection{Localization operators}

Localization operators, or Anti-Wick quantized operators, have appeared in many contexts, ranging from Quantum field theory to signal analysis. Quite recently, localization operators in the setting of various function spaces have been an active field of research. For a history and survey on the subject, consider \cite{CGR} and the references therein. The function spaces considered include Bargmann-Fock spaces, modulation spaces with exponential weights \cite{CPRT2, Toft2} and Gelfand-Shilov spaces \cite{Toft}, in particular quasi-analytic ones \cite{CPRT}, using in particular the good transformation behaviour of these spaces with respect to the short time Fourier transform. It is therefore only natural to consider their microlocal properties with respect to our global wave front set. We will do so in this section, proving a microlocality result.

\begin{defn}
Let $a\in\GSd{\theta}(\rr{2d})$. The localization operator $A^\psi_a$ with respect to the standard window $\psi$ with symbol $a$ is weakly defined by (for $u,\ v\in\GS\theta(\RR)$):
$$\langle A^\psi_a u,v\rangle = \langle a,V_\psi u \cdot V_\psi v\rangle.$$
\end{defn}

We have already noted that $(x_0,\xi_0) \notin \WFt(u)$ if the short-time Fourier transform $V_\psi u(x,\xi)$ satisfies \eqref{eq:wfstftdef} on an open cone $\Gamma\ni(x_0,\xi_0)$. We now recall the result of \cite[Proposition 5.11]{Toft}, stating that if $\theta\neq 1/2$ and   
$a\in L^\infty_\mathrm{loc}(\rr{2d})$ satisfies that $\forall\varepsilon>0$ there exists $C_\varepsilon>0$ such that 
\begin{equation}
\label{eq:GSmultiplier}
|a(x,\xi)|\leq C_\varepsilon e^{\varepsilon(|x|^{1/\theta}+|\xi|^{1/\theta})},
\end{equation}
then $A^\psi_a$ is continuous on both $\GS{\theta}(\RR)$ and $\GSd{\theta}(\RR)$. We can give a microlocal improvement of this statement, by estimating $V_\psi(A^\psi_a u)$ in terms of $V_\psi u$ and $a$:
\begin{align*}
V_\psi A^\psi_a u(v,\eta)&=V_\psi V_\psi^*(a\cdot(V_\psi u))(v,\eta)\\
	&=\int e^{-\frac{1}{2}|v-y|^2+i \langle y,\eta \rangle}e^{-\frac{1}{2}|x-y|^2-i \langle \xi, y \rangle}a(x,\xi)V_\psi u(x,\xi)\ dx d\xi dy\\
	&=\int e^{-|y-\frac{x+v}{2}|^2}e^{-\frac{1}{4}|x-v|^2}e^{i\langle y,\eta-\xi \rangle}a(x,\xi)V_\psi u(x,\xi)\ dx d\xi dy\\
	&= 2^{-d/2}\int e^{-\frac{1}{4}|\eta-\xi|^2}e^{-\frac{1}{4}|x-v|^2}e^{\frac{i}{2} \langle x+v,\eta-\xi \rangle}a(x,\xi)V_\psi u(x,\xi)\ dx d\xi 
\end{align*}
We can conclude that 
$$|V_\psi A^\psi_a u(x,\xi)|\lesssim \left(e^{-\frac{|\cdot|^2}{4}}*\left|(a\cdot V_\psi u)\right|\right)(x,\xi).$$
Using \lemref{lem:convest}, we obtain the following:
\begin{prop}[Microlocality of localization operators]
Let $\theta>1/2$ and $A^\psi_a $ be a localization operator with symbol $a\in L^\infty_\mathrm{loc}(\rr{2d})$ satisfying \eqref{eq:GSmultiplier}. Then we have $\WFt(A^\psi_a u)\subset \WFt(u)$.
\end{prop}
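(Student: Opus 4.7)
The identity $|V_\psi A^\psi_a u(x,\xi)|\lesssim (e^{-|\cdot|^2/4}*|a\cdot V_\psi u|)(x,\xi)$ derived immediately before the proposition already reduces the statement to a convolution estimate, so the plan is to invoke Lemma~\ref{lem:convest} with the Gaussian $G(z):=e^{-|z|^2/4}\in\GS{1/2}(\rr{2d})\subset\GS{\theta}(\rr{2d})$ and with $U(z):=a(z)\,V_\psi u(z)$.

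To apply the lemma, fix $(x_0,\xi_0)\notin\WFt(u)$; by the STFT-characterization \eqref{eq:stftdefeq} there exist an open conic neighbourhood $\Gamma$ of $(x_0,\xi_0)$ and $\varepsilon>0$ such that $|V_\psi u(z)|\lesssim e^{-\varepsilon(|x|^{1/\theta}+|\xi|^{1/\theta})}$ on $\Gamma$. On $\Gamma$, combining this decay with the subexponential growth \eqref{eq:GSmultiplier} of $a$ (applied with some $\varepsilon'<\varepsilon$) yields $|U(z)|\lesssim e^{-(\varepsilon-\varepsilon')(|x|^{1/\theta}+|\xi|^{1/\theta})}$. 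On the complement $\Gamma^c$, since $u\in\GSd{\theta}(\RR)$, the estimate \eqref{eq:gsdstf} together with \eqref{eq:stftrel} provides the global subexponential bound $|V_\psi u(z)|\lesssim C_\delta\, e^{\delta(|x|^{1/\theta}+|\xi|^{1/\theta})}$ for every $\delta>0$; multiplying by \eqref{eq:GSmultiplier} preserves this type of bound for $U$, so both hypotheses of Lemma~\ref{lem:convest} are met.

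For any open conic neighbourhood $\Gamma'$ of $(x_0,\xi_0)$ with $\overline{\Gamma'}\subset\Gamma$ the lemma then produces $\varepsilon''>0$ and $C>0$ such that $(G*|U|)(z)\leq C\,e^{-\varepsilon''(|x|^{1/\theta}+|\xi|^{1/\theta})}$ on $\Gamma'$, whence $|V_\psi(A^\psi_a u)(z)|$ satisfies the same estimate. Appealing once more to the STFT-characterization of $\WFt$ (i.e.\ the Gaussian-window version of \eqref{eq:stftdefeq} obtained via Lemma~\ref{lemma9.1}), this forces $(x_0,\xi_0)\notin\WFt(A^\psi_a u)$, and since $(x_0,\xi_0)$ was an arbitrary point outside $\WFt(u)$ we conclude $\WFt(A^\psi_a u)\subset\WFt(u)$.

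The only mildly delicate point is the splitting argument inside Lemma~\ref{lem:convest}: one must show that the convolution of the Gaussian $G$ against the part of $U$ supported in $\Gamma^c$ still decays exponentially on $\Gamma'$, using the standard scaling inequality $|z-w|\gtrsim |z|+|w|$ for $z\in\Gamma'$, $w\in\Gamma^c$. This is precisely what Lemma~\ref{lem:convest} bundles into a ready-to-use statement, which is why the main body of the argument is just the verification of its hypotheses. The restriction $\theta>1/2$ enters only through Toft's theorem, invoked earlier, which guarantees that $A^\psi_a$ is a well-defined continuous operator on $\GSd{\theta}(\RR)$; the microlocal estimate itself proceeds identically once this is granted.
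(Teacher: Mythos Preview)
Your argument is correct and follows exactly the paper's approach: the paper's proof consists solely of the line ``Using \lemref{lem:convest}, we obtain the following,'' and your proposal is a careful spelling-out of precisely that application, verifying the two growth/decay hypotheses on $U=a\cdot V_\psi u$ and then reading off the wave front set inclusion from the STFT characterization \eqref{eq:stftdefeq}. The only (harmless) wrinkle is that \lemref{lem:convest} is stated for $U\in\mathcal{C}^\infty$ while here $a$ is merely $L^\infty_\mathrm{loc}$, but the splitting proof of that lemma uses only the size estimates, so this does not affect validity.
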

We note that the method of the proof is not limited to the quasianalytic case but it can applied also for $\theta \geq 1.$
\begin{rem}
We can conclude that if $A^\psi_a$ is an invertible operator then we have the equality
$$\WFt(A^\psi_a u)= \WFt(u).$$
\end{rem}
\subsection{Ultradifferential operators}
In general, localization operators can have symbols that are not analytic. Motivated by \thmref{thm:wfprop} we can instead consider the following class of operators, where the coefficients are multipliers and partial derivatives appear.
\\
Denote the space of analytic functions $h:\CC\rightarrow \co$ satisfying
\begin{equation}
\label{eq:hsdef}
\forall \varepsilon>0\ \exists D_\varepsilon>0\text{ s.t.} |h(z)|\leq  D_\varepsilon\ e^{\varepsilon |z|^{1/\theta}}
\end{equation} 
by $MS^\theta(\CC)$. 
\begin{thm}
Let $\{h_\alpha\}$ be a family of elements of $MS^\theta(\CC)$ such that the constants $D_{\epsilon,\alpha}$ in \eqref{eq:hsdef} satisfy 
\begin{equation}
\label{eq:coeffineq}
D_{\epsilon,\alpha} \leq \frac{D^\prime_{\varepsilon}\varepsilon^{|\alpha|}}{\alpha!^\theta}
\end{equation}
for any $\varepsilon$ and for some constant $D^\prime_\varepsilon>0.$ Then we can define the following operator
$$Au(z)=\sum_{\alpha\in\no^{d}}  T_\varphi^* h_\alpha(z) (\partial_z^\alpha T_\varphi u)(z)$$ 
as a continuous map $\GSdG\theta (\RR)\rightarrow\GSdG\theta (\RR)$. \\
In particular it fulfills $\WFt(Au)\subset\WFt(u)$ and maps $\GS{\theta}$ into itself.
\end{thm}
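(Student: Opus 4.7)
The plan is to carry out the analysis on the FBI side. By Theorem \ref{thm:wfprop}, giving an ultradistribution $u \in \GSd{\theta}(\RR)$ is equivalent to giving an entire function $T_\varphi u$ satisfying the growth estimate \eqref{eq:gsdstf}. I would first show that
$$F(z) := \sum_{\alpha\in\no^d} h_\alpha(z)(\partial_z^\alpha T_\varphi u)(z)$$
defines an entire function satisfying \eqref{eq:gsdstf}, and then declare $Au$ to be the unique element of $\GSd{\theta}(\RR)$ with $T_\varphi(Au) = F$, again via Theorem \ref{thm:wfprop}; this is the meaning I would ascribe to the formal expression $T_\varphi^* \sum_\alpha h_\alpha \partial_z^\alpha T_\varphi u$. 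Tracking cones on which the stronger decay survives will then give $\WFt(Au) \subset \WFt(u)$.

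For the convergence and the global estimate, the starting point is that Lemma \ref{lem:cauchyineq}, conditions i) and ii), upgrades the global estimate on $T_\varphi u$ to the derivative bound $|\partial_z^\alpha T_\varphi u(z)| \leq C_{\varepsilon'}^{|\alpha|+1} \alpha!^\theta e^{\varepsilon'|z|^{1/\theta}+\Phi(z)}$ for every $\varepsilon'>0$. Combined with the hypothesis $|h_\alpha(z)| \leq D'_\varepsilon \varepsilon^{|\alpha|} \alpha!^{-\theta} e^{\varepsilon|z|^{1/\theta}}$, the factor $\alpha!^\theta$ cancels exactly and produces
$$|h_\alpha(z)(\partial_z^\alpha T_\varphi u)(z)| \leq D'_\varepsilon\, C_{\varepsilon'}(\varepsilon C_{\varepsilon'})^{|\alpha|} e^{(\varepsilon+\varepsilon')|z|^{1/\theta}+\Phi(z)}.$$
Given a target $\delta > 0$, I would fix $\varepsilon' = \delta/2$ (so the Cauchy constant $C_{\varepsilon'}=:C_\delta$ is determined) and then take $\varepsilon \leq \min(\delta/2,\, 1/(2C_\delta))$. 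The geometric series $\sum_\alpha (\varepsilon C_\delta)^{|\alpha|}$ converges and the total sum is bounded by $\tilde C_\delta e^{\delta|z|^{1/\theta}+\Phi(z)}$. Uniform convergence on compact sets makes $F$ entire and the estimate above is exactly \eqref{eq:gsdstf} for $F$, so $Au \in \GSd{\theta}(\RR)$ is well-defined.

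For the microlocal inclusion, I would repeat the summation on a cone $\Gamma \subset \CC^d$ on which $T_\varphi u$ already satisfies the stronger estimate \eqref{eq:gsstf}. Parts a) and b) of Lemma \ref{lem:cauchyineq} yield the derivative bound $|\partial_z^\alpha T_\varphi u(z)| \leq C^{|\alpha|+1} \alpha!^\theta e^{-\varepsilon_0|z|^{1/\theta}+\Phi(z)}$ on any slightly smaller cone $\Gamma'$ with $\overline{\Gamma'} \subset \Gamma$, with a single fixed pair of constants $(C,\varepsilon_0)$. Choosing $\varepsilon$ so that $\varepsilon C \leq 1/2$ and $\varepsilon \leq \varepsilon_0/2$ in the hypothesis on $h_\alpha$ gives
$$|F(z)| \leq \tilde C\, e^{-\tfrac{\varepsilon_0}{2}|z|^{1/\theta}+\Phi(z)} \qquad \text{for } z \in \Gamma',$$
so by the definition of $\WFt$ one concludes $\WFt(Au) \subset \WFt(u)$. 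When $u \in \GS{\theta}(\RR)$, Corollary \ref{cor:wfempty} gives $\WFt(u) = \emptyset$, the decay estimate holds on all of $\CC^d$, and Theorem \ref{thm:wfprop} yields $Au \in \GS{\theta}(\RR)$. Continuity in all spaces follows because the constants above depend linearly on the defining seminorms of $u$.

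The main technical point, and the only genuine subtlety, is the two-parameter balancing in the series: the cost $\varepsilon'$ of obtaining a favourable Cauchy constant $C_{\varepsilon'}$ must be paid by the decay rate $\varepsilon^{|\alpha|}/\alpha!^\theta$ of the coefficients. Hypothesis \eqref{eq:coeffineq} is engineered precisely to cancel the $\alpha!^\theta$ generated by Lemma \ref{lem:cauchyineq}; once this bookkeeping is arranged, no further ingredient beyond Theorem \ref{thm:wfprop} and Lemma \ref{lem:cauchyineq} is needed.
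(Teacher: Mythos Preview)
Your proposal is correct and is precisely the argument the paper intends: the theorem is stated there without proof, with the preamble ``Motivated by \thmref{thm:wfprop}'' signalling that the reader should combine Theorem \ref{thm:wfprop} and Lemma \ref{lem:cauchyineq} exactly as you do. The cancellation of $\alpha!^\theta$ between the Cauchy-type derivative bound and the coefficient hypothesis \eqref{eq:coeffineq}, followed by summing a geometric series in $(\varepsilon C_{\varepsilon'})^{|\alpha|}$, is the whole content, and your two-parameter balancing of $\varepsilon$ against $\varepsilon'$ is the right bookkeeping.
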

We can, by hand, compute the following relations for the transform $T_\varphi u$ of $u\in\GS{\theta}(\RR)$ with the standard phase:
\begin{align*}
zT_\varphi u(z)&=T_\varphi\big((-iy+i\partial_y)u\big)(z)\\
\partial_zT_\varphi u(z)&=T_\varphi\big((i\partial_y)u\big)(z)
\end{align*}
Therefore ultradifferential operators with polynomial coefficients can be understood as a subclass of the operators just considered, if their coefficients satisfy \eqref{eq:coeffineq}. In particular, differential operators with polynomial coefficients are microlocal. For these, we can also get the reverse wave front set inclusion in terms of the principal symbol of the operator, known as microellipticity, following \cite{Hormander1}.
\subsubsection{Microellipticity of differential operators}
Let $P=p(x,D)$ be a differential operator with polynomial coefficients. We can write it as follows
$$p(x,D) = \sum_{|\alpha|+|\beta| \leq m} c_{\alpha \beta} x^{\beta}D^{\alpha}.$$ In the sequel we shall denote by $p_m(x,\xi)$ the following principal symbol
$$p_m(x,\xi)= \sum_{|\alpha|+|\beta| = m} c_{\alpha \beta} x^{\beta}\xi^{\alpha} $$ which is homogeneous of order $m$ in $(x,\xi)$ and define the characteristic set of $P$ as follows:
$$\mathrm{Char}(P)= \{(x,\xi) \in T^{\ast}\RR: p_m(x,\xi) =0 \}.$$ We have the following result.

\begin{thm}\label{microellipticity}
Let $u \in \GSd{\theta}(\RR)$ and let $p(x,D) u =f.$ Then the following inclusions hold:
\begin{equation}
\label{microell}
 \WFt(u) \subset \WFt(Pu) \cup \mathrm{Char}(P).
\end{equation}
\end{thm}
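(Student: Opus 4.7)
My plan is to adapt Hörmander's elliptic regularity argument from \cite{Hormander1} (the case $\theta=1/2$) to the full quasianalytic range $1/2\leq\theta<1$, working on the FBI side. Fix $(x_0,\xi_0)\notin\mathrm{Char}(P)\cup\WFt(Pu)$; the task is to show $(x_0,\xi_0)\notin\WFt(u)$. With the standard phase $\phi$, the relations recorded just before the theorem express $T_\phi(x_j u)$ and $T_\phi(D_j u)$ as first-order polynomial differential operators in $z$ applied to $T_\phi u$. Substituting into $P=\sum_{|\alpha|+|\beta|\leq m} c_{\alpha\beta}x^\beta D^\alpha$ and normal-ordering yields a polynomial differential operator $P^\phi(z,\partial_z)$ on $\CC$ of total bi-order $m$ (assigning weight one to both $z_j$ and $\partial_{z_j}$) such that $T_\phi(Pu)=P^\phi T_\phi u$; its purely multiplicative top-order part is precisely $p_m(\chi^b_\phi(z))=p_m(-\Im z,-\Re z)$, and I denote the complementary piece of bi-order $\leq m-1$ by $R$.

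Set $z_0=\chi^b_\phi(x_0,\xi_0)$ and pick an open conic neighbourhood $\Gamma\ni z_0$ together with $R_0>0$ so that homogeneity of $p_m$ yields $|p_m(\chi^b_\phi(z))|\gtrsim |z|^m$ on $\Gamma$ for $|z|\geq R_0$, while by hypothesis $|T_\phi(Pu)(z)|\lesssim e^{-\varepsilon|z|^{1/\theta}+\Phi(z)}$ on $\Gamma$. Rearranging $P^\phi T_\phi u=T_\phi(Pu)$ and iterating $N$ times gives the parametrix-type expansion
\[
T_\phi u(z)=\sum_{k=0}^{N-1}\Bigl(-\tfrac{R}{p_m(\chi^b_\phi)}\Bigr)^k\frac{T_\phi(Pu)(z)}{p_m(\chi^b_\phi(z))}+\Bigl(-\tfrac{R}{p_m(\chi^b_\phi)}\Bigr)^N T_\phi u(z).
\]
On a sub-cone $\Gamma'\Subset\Gamma$, each term of the finite sum inherits the subexponential decay of $T_\phi(Pu)$ via Lemma~\ref{lem:cauchyineq}(b) applied to derivatives of $T_\phi(Pu)$, and the algebraic factor $|z|^{-km}$ from successive division by the elliptic symbol only strengthens the bound; the finite sum thus satisfies the target estimate $\lesssim e^{-\varepsilon'|z|^{1/\theta}+\Phi(z)}$.

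The remainder involves up to $Nm$ derivatives of $T_\phi u$ divided by $|p_m(\chi^b_\phi(z))|^N\gtrsim |z|^{Nm}$, and is controlled by the a priori bound of Lemma~\ref{lem:cauchyineq}(ii), giving $C_\varepsilon^{Nm}(Nm)!^\theta|z|^{-Nm}e^{\varepsilon|z|^{1/\theta}+\Phi(z)}$. Choosing $N\sim|z|^{1/\theta}/m$ and invoking Stirling, the factor $(Nm)!^\theta|z|^{-Nm}$ is bounded by $e^{-c|z|^{1/\theta}}$ for some $c>0$; this absorbs the a priori growth (with $\varepsilon$ small) and leaves a net decay $|T_\phi u(z)|\lesssim e^{-\varepsilon'|z|^{1/\theta}+\Phi(z)}$ on $\Gamma'$, so $(x_0,\xi_0)\notin\WFt(u)$ by Theorem~\ref{thm:wfprop}. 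The main obstacle is precisely this Stirling balancing: the Gelfand-Shilov index $\theta$ sits at the critical threshold where the derivative loss $(Nm)!^\theta$ from Cauchy's inequality and the algebraic gain $|z|^{-Nm}$ from dividing by $p_m^N$ exactly offset each other for $N\sim|z|^{1/\theta}$. Combinatorial bookkeeping is needed to verify that each application of $R/p_m(\chi^b_\phi)$ genuinely lowers the bi-grading by one and that the constants accumulated during iteration grow only geometrically, so that the optimisation in $N$ closes uniformly in $z$.
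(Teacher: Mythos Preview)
Your parametrix-on-the-FBI-side idea has a structural gap at the very first step. The function $p_m(\chi^b_\phi(z))=p_m(-\Im z,-\Re z)$ depends on $\Re z$ and $\Im z$ separately and is therefore \emph{not holomorphic} in $z$; it cannot occur as the zero-order (purely multiplicative) coefficient of the holomorphic differential operator $P^\phi(z,\partial_z)$. A one-line check: for $P=D_{y_j}$ the relations you quote give $P^\phi=-\partial_{z_j}$, whose multiplicative part is $0$, while your formula predicts $-\Re z_j$. More generally, substituting $y\mapsto i(z-\partial_z)$, $D_y\mapsto -\partial_z$ shows that every $D_y^\alpha$ with $\alpha\neq 0$ contributes only terms containing at least one $\partial_z$; the genuine zero-order part of $P^\phi$ is $\sum_{|\beta|\le m}c_{0\beta}(iz)^\beta$, which has nothing to do with ellipticity of $p_m$ unless $P$ happens to be a pure multiplication operator. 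Hence the decomposition $P^\phi=p_m(\chi^b_\phi)+R$ and the iterated identity $T_\phi u=\sum_{k<N}(-R/p_m)^k\,p_m^{-1}T_\phi(Pu)+(-R/p_m)^N T_\phi u$ are simply not valid, and the Stirling balancing that follows rests on nothing. (At the symbolic level $p_m$ does govern $P^\phi$, but only as an anti-Wick/Toeplitz symbol after the identification $\partial_z\leftrightarrow\bar z$; turning that into a parametrix requires Toeplitz calculus, not naive division.)

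The paper proceeds along a genuinely different line, working on the space side rather than the Bargmann side. One fixes $(x,\xi)$ in a cone where $p_m\neq 0$ and seeks an approximate solution of $P^\ast w_{x,\xi}(y)=e^{-i\langle y,\xi\rangle-\frac12|y-x|^2}$. After the substitution $z=y-x$ and conjugation by the Gaussian, the equation becomes $(1-R)G=1$ with $R$ an analytic differential operator whose coefficients are $O(\rho^{-2|\alpha|})$ on $\{|z|<c\rho\}$; here $p^\ast(x+z,-\xi+iz)$ is the actual invertible factor, and ellipticity of $p_m$ at $(x_0,\xi_0)$ enters precisely through $|p^\ast(x+z,-\xi+iz)|\gtrsim\rho^m$. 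A truncated Neumann series $G^N$ gives the approximate solution on $\{|z|<c\rho\}$; to globalise one cuts off, solves a $\bar\partial$-problem with H\"ormander's $L^2$ estimates to restore holomorphy, and ends up with a function $H^N_{x,\xi}\in\mathcal{S}_{1/2}(\RR)$. Pairing and choosing $N\sim\rho^2$ (not $\rho^{1/\theta}$) yields $|\langle u,e^{-i\langle\cdot,\xi\rangle-\frac12|\cdot-x|^2}\rangle|\lesssim e^{-\delta\rho^2}$, which dominates the required $e^{-\delta'\rho^{1/\theta}}$ for every $\theta\ge 1/2$. The $\bar\partial$-correction is the step with no counterpart in your sketch and is exactly what compensates for the fact that the ``good'' multiplicative factor $p^\ast(x+z,-\xi+iz)$ is only available on a ball of radius $\sim\rho$.
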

\begin{rem}
We observe that Theorem \ref{microellipticity} represents a generalization of Theorem 1.1 in \cite{CGR1} for the case $\mu=\nu=\theta.$ 
As a matter of fact, if the operator $P$ is globally elliptic, i.e. $\mathrm{Char} (P) =\emptyset$, we obtain that $\WFt(u) = \WFt(Pu)$.
In particular, Corollary \ref{cor:wfempty} implies that if $Pu \in \GS{\theta}(\RR),$ then $u \in \GS{\theta}(\RR).$
\end{rem}
\noindent
\textit{Proof of Theorem \ref{microellipticity}.}
We follow the outline of a proof given in \cite{Hormander1} for the case $\theta=1/2$, which uses the idea from \cite{Hormander0} of estimating $| \langle u, e^{-i\langle y, \xi \rangle -\frac{1}{2}|x-y|^2} \rangle |$ in terms of $| \langle Pu, w \rangle |$ by constructing an approximate solution for the equation $P^*w=e^{-i\langle y, \xi \rangle -\frac{1}{2}|x-y|^2}$.\\
Let $(x_0, \xi_0) \notin \WFt(Pu)\cup \mathrm{Char}(P).$ By this assumption there exists a closed conic set $\Gamma \subset T^{\ast}(\RR)$ such that $p_m(x,\xi) \neq 0$ on $\Gamma$ and there exist positive constants $C, \delta$ such that
\begin{equation}
\label{fcondition}
| \langle f, e^{-i\langle y, \xi \rangle -\frac{1}{2}|x-y|^2} \rangle | \leq C e^{-\delta(|x|^{\frac{1}{\theta}}+|\xi|^{\frac{1}{\theta}})}
\end{equation}
for any $(x,\xi) \in \Gamma.$ We consider, for fixed $(x,\xi) \in \Gamma$ the equation 
\begin{equation}
\label{eq:weq}
p^{\ast}(y,D_y) w_{x,\xi}(y)= e^{-i\langle y, \xi \rangle -\frac{1}{2}|x-y|^2},
\end{equation}
where $p^{\ast}(y,D_y)$ denotes the adjoint of $P$ and look for suitable approximate solutions. 
For that we turn it into a more standard form which can then be approximately solved by a Neumann series. 
 Setting $z=y-x$, we can re-write the equation \eqref{eq:weq} in the following form 
\begin{equation} \label{2}
p^{\ast}(x+z, D_z-\xi+iz)W_{x,\xi}(z)=1,
\end{equation}
with $$W_{x,\xi}(z)= e^{i\langle x+z, \xi \rangle+\frac{1}{2}|z|^2}w_{x,\xi}(x+z).$$
Since $p_m(y,-\eta)$ is the principal symbol of $P^{\ast},$ then $p^{\ast}(x+z, -\xi+iz)-p_m(x+z,\xi-iz)$ is a polynomial of degree strictly less than $m$ in $(x,\xi,z).$ Moreover, denoting $\rho= \sqrt{|x|^2+|\xi|^2},$ we have that, since $p_m(x,\xi) \neq 0 $ on $\Gamma$,  then there exist positive constants $M,c>0$ such that
$$|p^{\ast}(x+z,-\xi+iz)| \geq c \rho^m$$
on the set $$\Gamma_{c,M}=\{ (x,\xi,z) \in \mathbb{R}^{3d}: (x,\xi) \in \Gamma , \rho>M , |z| <c \rho\}.$$ 
In other words, $(1/p^{\ast})(x+z,-\xi+iz)$ behaves like a symbol of order $-m$ for $(x,\xi, z) \in \Gamma_{c,M}.$ Setting now 
\begin{equation}
\label{G}
	G_{x,\xi}(z)= p^{\ast}(x+z,-\xi+iz)W_{x,\xi}(z),
\end{equation}
the equation \eqref{2} takes the form
\begin{equation} 
\label{3}
	G_{x,\xi}(z)-RG_{x,\xi}(z)=1 
\end{equation}
for some operator $R= \sum\limits_{|\alpha| \geq 0}R_{\alpha}(x,\xi,z)D_z^{\alpha},$ where $R_{\alpha}$ is an operator of order $-|\alpha|$ with analytic coefficients on $\Gamma_{c,M}$. Moreover, for $z=\rho \zeta,$ we have that 
$$R= \sum_{|\alpha| \neq 0}R_{\alpha}(x,\xi,\rho \zeta)\rho^{-|\alpha|}D_\zeta^{\alpha}$$ with $|R_{\alpha}(x,\xi,\rho \zeta)\rho^{-|\alpha|}| \leq C \rho^{-2|\alpha|}.$ \\ 
Let us now consider the equation \eqref{3}. To go further with the proof we need to study the equation \eqref{3} for $z \in \CC,$ that is for $y \in \CC.$ A formal solution for the equation would be given by the Neumann series $\sum\limits_{j \geq 0}R^j 1.$ For $N \in \mathbb{N},$ let $G_{x,\xi}^N(z)$ be the sum of all terms in the series involving derivatives with respect to $z$ of order less or equal than $N$. Then we have
$$G_{x,\xi}^N(z)-RG_{x,\xi}^N(z)=1-e_{x,\xi}^N(z)$$ 
for some functions $e_{x,\xi}^N(z).$ Let now, as an approximate solution candidate for \eqref{eq:weq},
$$w_{x,\xi}^N(y)= e^{-i\langle y, \xi \rangle -\frac{1}{2}|x-y|^2}\frac{G_{x,\xi}^N(y-x)}{p^{\ast}(y,-\xi+i(y-x))}.$$ 
Then we obtain 
\begin{equation}
\label{4} 
p^{\ast}(y,D_y)w_{x,\xi}^N(y)= e^{-i\langle y, \xi \rangle -\frac{1}{2}|x-y|^2} \big(1-e_{x,\xi}^N(y-x)\big).
\end{equation}
 Arguing as in \cite{Hormander0}, we obtain the following estimates:
\begin{equation} \label{estGN}
|D_z^{\beta}G_N^{x,\xi}(z)| \leq C^N N^{|\beta|} , \qquad |\beta| \leq N < \rho^2
\end{equation}
\begin{equation} \label{esteN}
|D_z^{\beta}e_N^{x,\xi}(z)| \leq C^N N^{N+|\beta|} \rho^{-2N}, \qquad |\beta| \leq N.
\end{equation}
For every $(x, \xi) \in \Gamma, |z| <c\rho$ and with $c\rho >1.$
Let now $\chi \in C^{\infty}_0(\CC)$ such that $\chi(z)=1$ for $|z|<c/2$ and $\chi(z)=0$ for $|z|>c$ and consider the function
$$h^N_{x,\xi}(z)= \chi\left(\frac{z}{\rho}\right)e^{-\frac{1}{2}\langle z,z \rangle} \frac{G^N_{x,\xi}(z)}{p^*(x+z,-\xi+iz)}.$$
We observe that 
\begin{equation}
\label{eq:hN}
|h^N_{x,\xi}(z)| \leq C^{N+1} \rho^{-m}\exp \left( -\frac{1}{2}|\Re z|^2+ \frac{1}{2}|\Im z|^2 \right), \quad z \in \CC.
\end{equation}
Due to the cut-off function involved, $h^N_{x,\xi}$ is not holomorphic. We now construct a function $\wt h^N_{x,\xi}$ which shares Gaussian decay
such that $h^N_{x,\xi}- \wt h^N_{x,\xi}$ is holomorphic.\\
By \eqref{estGN} for $|\beta|=1$ the coefficient functions of $\bar\partial_z h^N_{x,\xi}$ satisfy, for $c\rho/2 <|z|<c\rho$:
\begin{multline}\label{estbar}
\left|\frac{\partial}{\partial \bar{z}_j}h^N_{x,\xi}(z)\right| \leq C^{N+1}\rho^{-m}\exp \left( -\frac{1}{2}|\Re z|^2+ \frac{1}{2}|\Im z|^2 \right) \\ \leq C^{N+1}\rho^{-m}\exp \left( -\frac{1}{4}|\Re z|^2+ \frac{3}{4}|\Im z|^2-\frac{1}{4}|z|^2 \right) \\ \leq 
C^{N+1}\rho^{-m}\exp \left( -\frac{1}{4}|\Re z|^2+ \frac{3}{4}|\Im z|^2 - \frac{c^2\rho^2}{16} \right).
\end{multline}
From the last estimate we obtain that
\begin{equation}\label{L2phi}
\sum_{j=1}^d\int_{\CC}\left|\frac{\partial}{\partial \bar{z}_j}h^N_{x,\xi}(z)\right|^2 e^{-\kappa(z)}\, d\lambda(z) \leq2 C'' C^{2N}\rho^{-2m+2d} e^{-\frac{c^2\rho^2}{8}}
\end{equation}
with $\kappa(z)= -\frac{1}{2}|\Re z|^2 + \frac{3}{4}|\Im z|^2.$
Therefore the coefficient functions of $\bar{\partial}h^N_{x,\xi}$ are elements of $L^2(\CC, e^{-\kappa}d\lambda).$
We observe that $\kappa(z)$ is plurisubharmonic, see \cite[Definition 2.6.1]{Hormander-1}. As $\bar{\partial}^2=0$, we can apply Theorem 4.4.2 in \cite{Hormander-1} to the equation $\bar{\partial}v=\bar{\partial}h^N_{x,\xi}$. It therefore admits a solution $\tilde{h}^N_{x,\xi}(z)$ such that
$$\int_{\CC} |\tilde{h}^N_{x,\xi}(z)|^2 e^{-\kappa(z)}(1+|z|^2)^{-2}\, d\lambda(z) \leq \frac{C''}{2} C^{2N} \rho^{2d-2m}e^{-\frac{c^2\rho^2}{8}}.$$
By the Cauchy's inequalities we then get
\begin{equation}\label{estbartilde}
|\tilde{h}^N_{x,\xi}(z)|\leq C''' C^N \rho^{d-m}e^{-\frac{c^2\rho^2}{16}}(1+|z|)^d \exp \big( -\frac{1}{4}|\Re z|^2+ \frac{3}{4}|\Im z|^2 \big), \, z \in \CC.
\end{equation}
By construction, the function $H^N_{x,\xi}(z)= h^N_{x,\xi}(z)- \tilde{h}^N_{x,\xi}(z)$ is holomorphic on $\CC$ because $\bar{\partial}H^N_{x,\xi}(z)=0$. Moreover, the estimates \eqref{eq:hN} and \eqref{estbartilde} imply that its restriction on the real domain is in $\mathcal{S}_{1/2}(\RR)$. Then taking now $z=y-x \in \mathbb{R}^d$,  we can write
\begin{multline} \label{decomposition}
\langle u, e^{-i \langle \cdot , \xi \rangle - \frac{1}{2}|\cdot -x |^2 }\rangle = \langle f, e^{-i \langle \cdot , \xi \rangle } H^N_{x,\xi}(\cdot -x) \rangle \\+  \langle u, e^{-i \langle \cdot, \xi \rangle - \frac{1}{2}|\cdot -x |^2 }\rangle - \langle f, e^{-i \langle \cdot, \xi \rangle } H^N_{x,\xi}(\cdot -x) \rangle  \\ = \langle f, e^{-i \langle \cdot , \xi \rangle } H^N_{x,\xi}(\cdot -x) \rangle + \langle u , e^{-i \langle \cdot , \xi \rangle - \frac{1}{2}|\cdot -x |^2 } - p^*(y,D_y)(e^{-i \langle \cdot , \xi \rangle } H^N_{x,\xi}(\cdot -x)) \rangle.
\end{multline}
To conclude the proof we need to estimate properly the two terms in the right-hand side of \eqref{decomposition}.
Concerning the first one we observe that since $H^N_{x,\xi} \in S_{1/2}(\RR),$ then its short time Fourier transform is
in $S_{1/2}(\mathbb{R}^{2d}).$ In particular we have
$$|V_{\psi}H_{x,\xi}^N(x,\xi)| \leq C_1 C^N e^{-\delta (|x|^2+|\xi|^2)}, \qquad (x,\xi) \in \mathbb{R}^{2d},$$ 
for some constants $C_1 >0, 0<\delta <1$ independent of $N$ and where $C$ is the same constant appearing in the estimates \eqref{esteN}, \eqref{estGN}, \eqref{eq:hN}, \eqref{estbartilde}.
Choosing now $N$ such that 
\begin{equation}
\label{choiceN} \frac{\delta \rho^2}{Ce}-1 \leq N \leq \frac{\delta \rho^2}{Ce},
\end{equation}
with $\rho=\sqrt{|x|^2+|\xi|^2}$, we obtain 
$$|V_{\psi}H_{x,\xi}^N(x,\xi)| \leq C_1^\prime  e^{\frac{\delta\rho^2}{e}-\delta \rho^2}= C_1^\prime e^{-\delta(1-e^{-1}) (|x|^2+|\xi|^2)}.$$
Then by the condition \eqref{fcondition} and by Lemma \ref{lemma9.1} we get
\begin{multline*}| \langle f, e^{-i \langle \cdot , \xi \rangle } H^N_{x,\xi}(\cdot -x) \rangle| \leq |V_{\psi}H_{x,\xi}^N(x,\xi)| \ast |\langle f, e^{-i \langle \cdot , \xi \rangle -\frac{1}{2}|\cdot -x|^2}  \rangle| \\ \leq C_2 e^{-\delta_2 (|x|^{1/\theta}+|\xi|^{1/\theta})} \end{multline*}
for some positive constants $C_2, \delta_2$ and for $(x,\xi) \in \Gamma$.
Now we want to prove that the second term in the right-hand side of \eqref{decomposition}  is $\mathcal{O}(e^{-\delta_3 \rho^2})$ for some $\delta_3 >0$ uniformly with respect to $N$. To do this we need to estimate the function 
\begin{equation}
\label{longterm}e^{-i \langle y , \xi \rangle - \frac{1}{2}|y -x |^2 } - p^*(y,D_y)(e^{-i \langle y , \xi \rangle } H^N_{x,\xi}(y -x))
\end{equation}
and its derivatives.
For $|y-x| <c\rho/2$ we have $e^{-i\langle y, \xi\rangle }h_{x,\xi}^N (y-x)=w_{x,\xi}^N(y-x)$. Then 
\begin{multline*}
e^{-i \langle y , \xi \rangle - \frac{1}{2}|y -x |^2 } - p^*(y,D_y)(e^{-i \langle y , \xi \rangle } H^N_{x,\xi}(y -x))
\\ =  e^{-i \langle y , \xi \rangle - \frac{1}{2}|y -x |^2 }e^N_{x,\xi}(y-x)+p^{\ast}(y,D_y)e^{-i\langle y,\xi\rangle}
\tilde{h}_{x,\xi}^N(y-x).
\end{multline*}
Now, by \eqref{esteN} and \eqref{choiceN} we have
$$|e^{-i \langle y , \xi \rangle - \frac{1}{2}|y -x |^2 }e^N_{x,\xi}(y-x)| \leq |e^N_{x,\xi}(y-x)| \leq C^N N^N \rho^{-2N}
\leq e^{-N} \leq e^{1-\frac{\delta\rho^2}{Ce}},$$
since $0<\delta <1$, whereas the term $p^{\ast}(y,D_y)e^{-i\langle y,\xi\rangle}
\tilde{h}_{x,\xi}^N(y-x)$ satisfies a similar bound by \eqref{estbartilde}.
For $|y-x| \geq c\rho/2$, we obviously have $e^{-i \langle y , \xi \rangle - \frac{1}{2}|y -x |^2 }= \mathcal{O}(e^{-c^2\rho^2/8}).$
Moreover, arguing as for \eqref{estbar}, we obtain for some $\gamma>0$ that 
$$|H_{x,\xi}^N(y-x)| \leq C_3 e^{-\delta_3 \rho^2}\exp \big( -\gamma |y -x|^2 \big), $$
and a similar estimate holds for $p^*(y,D_y)(e^{-i \langle y , \xi \rangle } H^N_{x,\xi}(y -x))$. In conclusion, for $|y-x| \geq c\rho/2$ we obtain
\begin{multline*}|e^{-i \langle y , \xi \rangle - \frac{1}{2}|y -x |^2 } - p^*(y,D_y)(e^{-i \langle y , \xi \rangle } H^N_{x,\xi}(y -x)|  \\ \leq C_4
e^{-\delta_3 \rho^2}\exp \big( -\gamma |y -x|^2 \big).
\end{multline*}
The estimate of the derivatives of the function \eqref{longterm} can be obtained by estimating the derivatives of its entire extension by 
Cauchy's inequalities arguing as in the proof of Lemma \ref{lem:cauchyineq}. The details are left to the reader, cf. \cite{Hormander1}.

\qed

\section*{acknowledgements}

We are grateful to Profs. D. Bahns, E. Cordero, L. Rodino, J. Toft, P. Wahlberg and I. Witt for valuable advice and constructive criticism.

This work was supported by the German Research Foundation (Deut-sche Forschungsgemeinschaft, DFG) through the Institutional Strategy of the University of G\"ottingen, in particular through the research training group GRK 1493 and the Courant Research Center ``Higher Order Structures in Mathematics''. The second author is also grateful for the support received by the Studienstiftung des Deutschen Volkes and the German Academic Exchange Service (DAAD), as part of this collaboration was funded within the framework of a ``DAAD Doktorandenstipendium''.


\end{document}